\newcommand{\be}{\begin{equation}}
\newcommand{\ee}{\end{equation}}
\newcommand{\R}{{\mathbb R}}
\newcommand{\ve}{{\varepsilon}}
\numberwithin{equation}{section}
\numberwithin{figure}{section}
\newtheorem{theorem}{Theorem}[section]
\newtheorem{proposition}[theorem]{Proposition}
\newtheorem{remark}[theorem]{Remark}
\newtheorem{corollary}[theorem]{Corollary}
\newtheorem{definition}[theorem]{Definition}
\begin{document}
\vglue-1cm \hskip1cm
\title[Periodic Waves for the Log-KdV Equation]{Orbital Stability of Periodic Traveling-wave solutions for the Log-KdV Equation}

\begin{center}

\subjclass[2000]{76B25, 35Q51, 35Q53.}

\keywords{Orbital stability, log-KdV, periodic waves}

\maketitle

{\bf F\'abio Natali}

{Departamento de Matem\'atica - Universidade Estadual de Maring\'a\\
Avenida Colombo, 5790, CEP 87020-900, Maring\'a, PR, Brazil.}\\
{ fmnatali@hotmail.com}

\vspace{3mm}

{\bf Ademir Pastor}

{ IMECC-UNICAMP\\
Rua S\'ergio Buarque de Holanda, 651, CEP 13083-859, Campinas, SP,
Brazil.  } \\{ apastor@ime.unicamp.br}

\vspace{3mm}

{\bf Fabr\'icio Crist\'ofani }

{Departamento de Matem\'atica - Universidade Estadual de Maring\'a\\
Avenida Colombo, 5790, CEP 87020-900, Maring\'a, PR, Brazil.}\\
{ fabriciognr@gmail.com}

\end{center}

\begin{abstract}
In this paper we establish the orbital stability of
periodic waves related to the logarithmic Korteweg-de Vries
equation. Our motivation is inspired in the recent work
\cite{carles}, in which the authors established the well-posedness
and the linear stability of Gaussian solitary waves. By using the
approach put forward recently in \cite{natali1} to construct a
smooth branch of periodic waves as well as to get the spectral
properties of the associated linearized operator, we apply the
abstract theories in \cite{grillakis1} and \cite{weinstein1} to
deduce the orbital stability of the periodic traveling waves in the energy space.
\end{abstract}

\section{Introduction}

Results of well-posedness  and orbital stability of periodic
traveling waves related to the logarithmic Korteweg-de Vries
(log-KdV henceforth) equation
\be\label{logKDV}
u_t+u_{xxx}+2(u\log(|u|))_x=0,
\ee
will be shown in this manuscript.
Here, $u=u(x,t)$ designs a real-valued function of the real
variables $x$ and $t$. Equation $(\ref{logKDV})$ is a dispersive
equation and it models solitary waves in anharmonic chains with
Hertzian interaction forces (see \cite{carles}, \cite{dp},
\cite{jp}, and \cite{nesterenko}).

Depending on the boundary conditions imposed on the physical
problem, it is natural to consider  special kind of solutions called
\textit{traveling waves}, which imply a balance between the effects
of the nonlinearity and the frequency dispersion. In our context,
such waves are of the form $u(x,t)=\phi(x-\omega t)$, where
$\omega\in\mathbb{R}$ indicates the wave speed and
$\phi=\phi_{\omega}(\xi)$ is a smooth real function. By substituting
this kind of solution into $(\ref{logKDV})$ we obtain the nonlinear
second order differential equation
\be\label{travlog}
-\phi''+\omega\phi-\phi\log\phi^2+A=0,
\ee where $A$ is a constant of integration.

As is well known, if $A=0$,
$(\ref{travlog})$ admits a solution given by the Gaussian solitary
wave profile (see, for instance, \cite{Cazenave2} or
\cite{cazenave}) \be\label{soliton}
\phi_{\omega}(x)=e^{\frac{1}{2}+\frac{\omega}{2}}e^{-\frac{x^2}{2}},\
\ \ \omega\in\mathbb{R}. \ee The spectral stability related to this
solution was studied in \cite{carles}, where the authors studied the
linear operator, arising from the linearization of \eqref{logKDV}
around \eqref{soliton}, in the space $L^2(\R)$. In particular, they
shown that such an operator has a purely discrete spectrum
consisting of a double zero eigenvalue and a symmetric sequence of
simple purely imaginary eigenvalues. In addition, the associated
eigenfunctions do not decay like Gaussian functions but have
algebraic decay. Also, by using numerical approximations, they also
shown that the Gaussian initial data do not spread out and preserve
their spatial Gaussian decay in the time evolution of the linearized
equation.

It should be noted that the nonlinear orbital stability of \eqref{soliton} was also dealt with in \cite{carles}.
However, in view of the lack of uniqueness and continuous dependence, this is a conditional result.
Indeed, the authors establish the orbital stability (in the energy space) provided that uniqueness and
continuous dependence upon the data hold in a suitable subspace of $H^1(\R)$.

Our first concern in this paper is to study the Cauchy problem
\be
\left\{\begin{array}{llllll}\label{logKDV1}
u_t+u_{xxx}+(u\log(u^2))_x=0,\\
u(x,0)=u_0(x),
\end{array}\right.
\ee
where $u_0$ belongs to the periodic Sobolev space $H^1_{per}([0,L])$.
Most of our arguments will be based on the approach introduced by Cazenave \cite{cazenave} for the logarithmic Schr\"odinger equation
\be\label{NLSlog}
iu_t+\Delta u+\log(|u|^2)u=0,
\ee
where $u:\mathbb{R}^n\times\mathbb{R}\rightarrow\mathbb{C}$, is a complex-valued function. We point out that, in \cite{carles},
 the authors gave  a very simple manner of how  to use the arguments in \cite{cazenave}  in order to obtain the well-posedness
 of $(\ref{logKDV1})$ posed in $Y$ (see (\ref{Yclass})).

The logarithmic nonlinearity in $(\ref{logKDV1})$
brings a rich set of difficulties since the function
$x\in\mathbb{R}\mapsto x\log(|x|)$ is not differentiable at the
origin. The lack of smoothness  interferes, for instance, in
questions concerning the local solvability  because it is not possible to apply the contraction argument to deduce the existence of solutions. In order to get a grip on the absence of regularity, the idea is (see \cite{carles}, \cite{Cazenave3}, \cite{cazenave2}, and \cite{cazenave}) to  solve a
regularized or approximate problem.  Provided we can obtain  suitable uniform estimates
for the approximate solutions, they  converge, in a weak sense, to the solution of the original problem and it gives the existence of weak solutions in an appropriate Banach space.

Another difficulty coming from the non-smoothness of the nonlinearity,  is
the strain in establishing the uniqueness of solutions.
Indeed, energy methods, as well as, contraction arguments can not be
applied in these cases since we need, to this end, to assume that the nonlinearity
 is, at least, locally Lipschitz. It is clear that the function
$x\in\mathbb{R}\mapsto x\log(|x|)$ does not satisfy such a property at
the origin. We emphasize, however, that the uniqueness for the Cauchy problem associated with \eqref{NLSlog} was given  in \cite{cazenave}  by combining
energy estimates
with a suitable Gronwall-type inequality.

 To begin with our results, let us first observe that \eqref{logKDV} conserves (at least formally) the energy
\begin{equation}\label{conser12}
E(v)=\frac{1}{2}\int\left(v_x^2+v^2-v^2\log(v^2)\right)dx,
\end{equation}
the mass
\begin{equation}\label{conser22}
F(v)=\frac{1}{2}\int v^2dx,
\end{equation}
and the charge
\begin{equation}\label{conser32}
M(v)=\int vdx.
\end{equation}

 The above integrals  must be understood on the whole
real line or, in the periodic setting, on the interval $[0,L]$. \\
\indent Following the arguments in \cite{carles}, we obtain for any initial data $u_0\in X=H_{per}^1([0,L])$ the existence of a global solution $u\in
L^{\infty}(\mathbb{R};X)$ of $(\ref{logKDV1})$ satisfying the inequalities
\be\label{ineqconser} M(u(t))\leq M(u_0),\ \ F(u(t))\leq F(u_0),\ \
E(u(t))\leq E(u_0),\ \ \mbox{for all}\ t\in\mathbb{R}. \ee In addition, by supposing the complementary condition
\be\label{log-cond}
\partial_x
(\log|u|)\in L^{\infty}((\mathbb{R};L_{per}^{\infty}([0,L])), \ee one has that the solution $u$ exists in $C(\mathbb{R};X)$, is
unique,  satisfies $M(u(t))=M(u_0)$,
$F(u(t))=F(u_0)$ and $E(u(t))=E(u_0)$, for all $t\in\mathbb{R}$, and the data-solution map $u_0\in X\mapsto u\in C([-T,T];X)$ is continuous, for all $T>0$,.

\indent If one works with \eqref{logKDV1} on the whole real line, the energy \eqref{conser12} makes sense only for functions in the class
\begin{equation}\label{Yclass}
Y:=\{u\in H^1(\R); \;\;u^2\log|u|\in L^1(\R)\}.
\end{equation}
This lead the authors in \cite{carles} to study \eqref{logKDV1} in $Y$. On the other hand, in view of the log-Sobolev inequality (see \cite[Theorem 4.1]{dolbeault}),
\begin{equation}\label{log-sobo}
\displaystyle\int_{0}^L|v|^2\log(|v|^2)dx\leq C\left[ \int_0^Lv_x^2dx+\log\left(\dfrac{1}{L}\int_0^Lv^2dx\right)\int_0^Lv^2dx\right],
\end{equation}
such a restriction is not needed in the periodic framework. Thus the space $X$ seems to be the natural energy space to study \eqref{logKDV1}.

Our main result concerning local well-posedness will be presented in Theorem $\ref{teoWP}$ below. However, a few words of explanation are in order. The first one concerns the existence of global weak solutions.  As we have already mentioned above, this result will follow from an adaptation of the arguments in \cite{carles} in the periodic setting. Since the solution $u$ will be obtained as a weak limit of bounded sequences defined in a reflexive space, one can use Fatou's Lemma to deduce the ``conserved inequalities'' in   $(\ref{ineqconser})$. The assumption \eqref{log-cond} then  enable us to deduce the uniqueness of local solutions and consequently equalities in \eqref{ineqconser}. Another issue concerns  the uniqueness of solutions. The assumption \eqref{log-cond}, is a rather strong requirement. Note, however, that this condition holds if $u(x,t)=\phi(x-\omega t)$, where $\phi$ is an $L$-periodic and positive function.  Differently, in the non-periodic scenario, if $\phi$ is as in \eqref{soliton} then   $u(x,t)=\phi(x-\omega t)$ does not satisfy \eqref{log-cond}.

Next, we turn attention to the existence and orbital stability of periodic waves. We prove the existence of periodic solutions for \eqref{travlog}
 by using an extension of the abstract framework developed in \cite{natali1}. The approach for proving the orbital stability of such traveling waves is  divided into two basic cases. In the first one we assume $A\equiv 0$ and establish the orbital stability by a direct application of the abstract theory due to Grillakis, Shatah
and Strauss in \cite{grillakis1}. In the second one, we assume $A\neq0$ and use an adaptation of the arguments  in \cite{grillakis1} to deduce the orbital stability of a smooth surface $(\omega,A)\in \mathcal{O}\mapsto\psi_{(\omega,A)}$ of $L$-periodic travelling waves. Some arguments in our approach were borrowed from \cite{johnson}, where, following close the arguments in \cite{bona2} and \cite{grillakis1}, the author have established a general criterion to obtain the  orbital stability of periodic waves associated with the generalized Korteweg-de Vries equation (gKdV henceforth),
\be\label{gkdv}
  u_t+f(u)_x+u_{xxx}=0,
  \ee
  where $f:\mathbb{R}\rightarrow\mathbb{R}$ is a smooth real-valued function,

One may note that the functional $E$
is not smooth at the origin. Nevertheless, as we will see below, our
periodic waves are strictly positive or negative. Thus, at least in a
neighborhood of such waves, $E$ is smooth and this allows us to use
the abstract theories mentioned above.

As is well known, there are two  key ingredients in the nonlinear stability theory. The first one is, for a fixed $L>0$, the existence  of an open
set $\mathcal{O}\subset\mathbb{R}^n$, and a smooth branch
$\mu\in \mathcal{O}\mapsto \psi_{\mu}$,  such that
$\psi_{\mu}$ is $L$-periodic and solves $(\ref{travlog})$, for all $\mu\in \mathcal{O}$.
In our case, we will see that $L$ belongs to a convenient open interval contained in $\mathbb{R}$ and $\mu$ is either $\omega$ (in the case $A=0$) or $(\omega,A)$ (in the case $A\neq0$).

The second ingredient is the knowledge of the non-positive spectrum of the linearized operator around the periodic traveling wave in question. Usually, this  turns out to be a Hill's operator as
\begin{equation}\label{opera12}
\mathcal{L}=-\partial_x^2+g'(\mu,\psi_{\mu}).
\end{equation}
 Here, we use the recent theory developed in \cite{natali1}, where the authors presented
a new method based on the classical Floquet theorem  to establish
a characterization of the first three eigenvalues of $\mathcal{L}$
by knowing one of its eigenfunctions. The main point  is
that it is not necessary to know an explicit solution
of a general nonlinear differential
equation of the form
\begin{equation}\label{geneq}
-\psi_{\mu}''+g(\mu,\psi_{\mu})=0.
\end{equation}
In addition, it is possible to decide that the eigenvalue
\textit{zero} is simple also without knowing an explicit
solution of $(\ref{geneq})$. In particular, we show the operator
$\mathcal{L}$ appearing in our context has only one negative eigenvalue which is
simple and zero is a simple eigenvalue with
$$
\ker(\mathcal{L})=\textrm{span}\{\psi_{\mu}'\}.
$$
Moreover, the remainder of the spectrum is discrete and bounded away
from zero. For the precise statements we refer the reader to Section 4.

\indent This paper is organized as follows: In Section 2 is proved the well-posedness and the existence of
conservation laws related to the model $(\ref{logKDV})$. Existence of periodic waves for an ODE like \eqref{geneq} is treated in Section 3. In Section 4 we apply the method developed in Section 3 to study the existence of periodic travelling waves for \eqref{travlog}. The orbital stability of such waves is then established.

\section{Well Posedness Results - Verbatim of \cite{carles}}

In this section we sketch the proof of the local well-posedness theory by using the leading arguments in \cite{carles} and
\cite{Cazenave2} (see also \cite{Cazenave3} and \cite{cazenave}). The main different point here is that instead of proving the well-posedness in a class similar to that in \eqref{Yclass}, we establish our result in the whole energy space $X$. Here and throughout this section, $L>0$ will be a fixed number representing the period of the function in question. The next theorem gives a result on the existence of (weak) solutions  to \eqref{logKDV1}  in the energy space $X$.

\begin{theorem}\label{teoWP}
For any $u_0\in X$, there exists a global solution $u\in
L^{\infty}(\mathbb{R};X)$ of $(\ref{logKDV1})$ such that
\be\label{ineqconser1} M(u(t))\leq M(u_0),\ \ F(u(t))\leq F(u_0),\ \
E(u(t))\leq E(u_0),\ \ \mbox{for all}\ t\in\mathbb{R}. \ee Moreover,
if \be\label{log-cond1}
\partial_x
(\log|u|)\in L^{\infty}(\mathbb{R};L_{per}^{\infty}([0,L])), \ee then the solution $u$ exists in $C(\mathbb{R};X)$, is
unique, for all $t\in\mathbb{R}$, it satisfies $M(u(t))=M(u_0)$,
$F(u(t))=F(u_0)$ and $E(u(t))=E(u_0)$ and, for all $T>0$, the data-solution map $u_0\in X\mapsto u\in C([-T,T];X)$ is continuous.
\end{theorem}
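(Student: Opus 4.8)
The plan is to follow the regularization-plus-compactness scheme of Cazenave \cite{cazenave} and Carles--Pelinovsky \cite{carles}, adapted to the periodic energy space $X=H^1_{per}([0,L])$, and to recover uniqueness and continuous dependence separately under the extra hypothesis \eqref{log-cond1}. Existence and the inequalities \eqref{ineqconser1} will be obtained for every datum in $X$, whereas the logarithmic singularity of $s\mapsto s\log s^2$ at the origin forces uniqueness, the equalities, and the continuity of the flow to be treated only in the subclass of solutions verifying \eqref{log-cond1}.

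For existence, first I would replace the non-Lipschitz nonlinearity $N(u)=u\log u^2$ by a family of globally Lipschitz approximations $N_\varepsilon$ that coincide with $N$ outside an $\varepsilon$-neighbourhood of the origin and are smooth through it. For each $\varepsilon>0$ the regularized problem $u^\varepsilon_t+u^\varepsilon_{xxx}+N_\varepsilon(u^\varepsilon)_x=0$, $u^\varepsilon(0)=u_0$, has a smooth nonlinearity, so standard KdV-type local theory furnishes a solution, extended globally by its conservation laws. The crucial point is to obtain bounds on $\|u^\varepsilon\|_X$ uniform in $\varepsilon$: these come from the (approximate) mass $F$ and charge $M$ together with the energy $E$ in \eqref{conser12}, controlling the indefinite term $\int (u^\varepsilon)^2\log((u^\varepsilon)^2)\,dx$ by means of the log-Sobolev inequality \eqref{log-sobo}. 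This makes $(u^\varepsilon)$ bounded in $L^\infty(\mathbb{R};X)$; a weak-$\ast$ limit $u$, combined with an Aubin--Lions compactness argument to pass to the limit in the continuous (though not Lipschitz) map $N_\varepsilon(u^\varepsilon)\to u\log u^2$, yields a weak solution $u\in L^\infty(\mathbb{R};X)$ of \eqref{logKDV1}. Since $u$ arises as a weak limit and $F$, $M$, $E$ are weakly lower semicontinuous (via convexity and Fatou's lemma, again using \eqref{log-sobo}), the inequalities \eqref{ineqconser1} follow at once.

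The delicate part, which I expect to be the main obstacle, is uniqueness together with continuous dependence and the upgrade of \eqref{ineqconser1} to equalities, all under \eqref{log-cond1}. Let $u$ and $v$ be two solutions in this class; set $w=u-v$ and perform the $L^2$ energy estimate. The dispersive term contributes $\int w\,w_{xxx}\,dx=0$ by periodicity, and, writing $N(u)_x=(\log u^2+2)u_x$, the nonlinear contribution splits as $\int w(\log u^2+2)w_x\,dx+\int w(\log u^2-\log v^2)v_x\,dx$. Integrating the first integral by parts turns it into $-\int w^2\,\partial_x(\log|u|)\,dx$, bounded by $\|\partial_x(\log|u|)\|_{L^\infty}\|w\|_{L^2}^2$; this is precisely where hypothesis \eqref{log-cond1} enters, converting the dangerous derivative of the logarithm into a Gronwall-type term. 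The remaining integral is estimated through the Cazenave-type inequality controlling differences of the logarithmic nonlinearity (as in \cite{cazenave}), which absorbs the singularity at $s=0$ and keeps the bound quadratic in $w$ up to a logarithmic loss. Collecting these gives a differential inequality of the form $\frac{d}{dt}\|w\|_{L^2}^2\le C\,\|w\|_{L^2}^2$ (or an Osgood-type refinement), and Gronwall's lemma then forces $w\equiv 0$, i.e. uniqueness; running the same estimate with distinct data yields continuous dependence of the map $u_0\mapsto u$.

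Finally, once uniqueness is available the constructed weak solution is the unique one and inherits enough regularity to justify the formal computations leading to conservation: differentiating $F(u)$, $M(u)$, and $E(u)$ along the flow and invoking \eqref{logKDV1} shows each derivative vanishes, so \eqref{ineqconser1} holds with equality and $u\in C(\mathbb{R};X)$. The heart of the argument, and the only place where anything beyond the classical scheme is required, is the difference estimate above, where the non-differentiability of $s\mapsto s\log s^2$ at the origin is overcome exactly by the control afforded by \eqref{log-cond1}.
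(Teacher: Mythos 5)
Your existence half is essentially the paper's own route: the paper regularizes $f(u)=u\log|u|$ by a polynomial patch near the origin (a degree-13 polynomial $p_{\varepsilon}$ matching derivatives of $f$ at $u=\varepsilon$ up to order 6, so that the $C^6$ local theory of Hu--Li, Theorem \ref{teoregKDV}, applies), extends the regularized solutions globally by the conservation laws, obtains uniform-in-$\varepsilon$ bounds, and gets the weak solution and the inequalities \eqref{ineqconser1} from a weak limit plus Fatou, deferring the details to \cite{carles}. Your version of this (Lipschitz regularization, log-Sobolev \eqref{log-sobo} for the uniform bound, Aubin--Lions, weak lower semicontinuity) is the same scheme and is fine.

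The gap is in your uniqueness estimate, specifically in the treatment of the cross term $\int w(\log u^2-\log v^2)v_x\,dx$. You propose to close it with a ``Cazenave-type inequality controlling differences of the logarithmic nonlinearity\dots quadratic in $w$ up to a logarithmic loss.'' No such pointwise inequality holds for real-valued functions: taking $v\equiv 1$ and $u=e^{-n}$, one has $|v(\log u^2-\log v^2)|=2n\to\infty$ while $|u-v|\le 1$ and $|w|\,(1+|\log|w||)$ stays bounded, so the term is neither $O(|w|)$ nor $O(|w|\,|\log|w||)$. The Cazenave--Haraux lemma you have in mind is specific to the complex Schr\"odinger setting: it bounds $|\mathrm{Im}\,[(z_1\log|z_1|^2-z_2\log|z_2|^2)(\overline{z_1}-\overline{z_2})]|\le 4|z_1-z_2|^2$, and it is precisely the imaginary part that annihilates the unbounded radial (modulus) contribution; the real-valued KdV flow has no analogue of this cancellation. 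What actually closes the argument in the periodic setting --- and this is the content of ``following \cite{carles}'' --- is that hypothesis \eqref{log-cond1} gives far more than a Gronwall coefficient: since $\partial_x\log|u(\cdot,t)|$ is bounded by $C:=\|\partial_x\log|u|\|_{L^\infty}$ on a circle of length $L$, the oscillation of $\log|u(\cdot,t)|$ over one period is at most $CL$, hence $\inf_x|u(x,t)|\ge e^{-CL}\sup_x|u(x,t)|$, so any nontrivial solution in the uniqueness class is pointwise bounded away from zero (likewise for $v$, and then $v_x=v\,\partial_x\log|v|\in L^\infty$). On the range of such solutions the map $s\mapsto s\log s^2$ is smooth and Lipschitz, the cross term is bounded by a constant times $\|w\|_{L^2}^2$, and Gronwall then yields uniqueness and continuous dependence. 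With only the literal inequality you invoked, the estimate does not close; with the lower bound extracted from \eqref{log-cond1}, it does.
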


To begin with, let us recall the following well-posedness result associated with the (generalized) KdV equation in the periodic setting.

\begin{theorem}\label{teoregKDV}
The initial-value problem
\be\label{regIVP1}
\left\{\begin{array}{lllll}
u_t+u_{xxx}+f'(u)u_x=0, \ \ \ \ t\in \mathbb{R},\\
u(x,0)=u_0(x),\ \ \ \ \ \ \ \ \ \ \ \ \ x\in[0,L],\end{array}
\right.
\ee
is locally well-posed provided $f$ is a $C^6$-function
and the initial data $u_0$ belongs to $H_{per}^s([0,L])$, $s > 1/2$. More precisely, there exist $T_0=T_0(\|u_0\|_{H^s_{per}})>0$ and a unique solution, defined in $[-T_0,T_0]$, satisfying \eqref{regIVP1} in the sense of the associated integral equation.
\end{theorem}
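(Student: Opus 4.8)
The estimates behind Theorem~\ref{teoregKDV} cannot come from energy methods alone: writing the nonlinearity as $f'(u)u_x=(f(u))_x$, a direct $H^s_{per}$ energy estimate forces control of $\|u_x\|_{L^\infty}$ through the resulting commutator and transport terms (via a Kato--Ponce estimate), which is only available for $s>3/2$. To reach the stated range $s>1/2$ one must exploit the dispersion carried by the Airy group $S(t)=e^{-t\partial_x^3}$. The plan is therefore to work in the Fourier restriction norm (Bourgain) spaces $X^{s,b}$ adapted to the symbol $\tau=k^3$, and to run a contraction on the Duhamel formulation
\be\label{duham}
u(t)=S(t)u_0-\int_0^t S(t-t')\,\partial_x\big(f(u(t'))\big)\,dt'.
\ee

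Concretely, I would define $X^{s,b}$ as the closure of the smooth $L$-periodic functions under
\be\label{xsbnorm}
\|u\|_{X^{s,b}}^2=\sum_{k}\int_{\mathbb{R}}\langle k\rangle^{2s}\langle \tau-k^3\rangle^{2b}\,|\widehat{u}(k,\tau)|^2\,d\tau,
\ee
and localize in time with a cutoff $\psi(t/T)$. The two linear ingredients are standard and I would quote them: the homogeneous bound $\|\psi(t)S(t)u_0\|_{X^{s,b}}\lesssim \|u_0\|_{H^s_{per}}$, and the inhomogeneous (Duhamel) bound, which for $\tfrac12<b<1$ gains a positive power of $T$,
\be\label{duhamest}
\Big\|\psi(t/T)\int_0^tS(t-t')F(t')\,dt'\Big\|_{X^{s,b}}\lesssim T^{\theta}\,\|F\|_{X^{s,b-1}},\qquad \theta>0.
\ee
With these in hand the whole problem reduces to a single nonlinear estimate.

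The crux, and the step I expect to be the main obstacle, is the nonlinear bound
\be\label{nlest}
\big\|\partial_x\big(f(u)\big)\big\|_{X^{s,b-1}}\le C\big(\|u\|_{X^{s,b}}\big)\,\|u\|_{X^{s,b}},
\ee
for some nondecreasing $C$. Two features must be combined. First, since $s>\tfrac12$ and $b>\tfrac12$ one has $X^{s,b}\hookrightarrow C(\mathbb{R};H^s_{per})\hookrightarrow C(\mathbb{R};L^\infty)$, so along the flow $u$ stays bounded and $f(u)$ makes pointwise sense; using the Moser/Nemytskii estimate in the Banach algebra $H^s_{per}$ (valid for $s>\tfrac12$) together with $f\in C^6$, one controls the composition fibrewise in time by $C(\|u\|_{L^\infty})\|u\|_{H^s_{per}}$. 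Second, the derivative $\partial_x$ loses one full power of $k$, which has to be recovered from the modulation weight $\langle\tau-k^3\rangle$: this is exactly the periodic bilinear/multilinear $X^{s,b}$ estimate for KdV-type derivative nonlinearities, where the resonance identity for $\tau=k^3$ supplies the needed smoothing. For a monomial $f(u)=u^j$ this follows by iterating the basic bilinear estimate $\|\partial_x(uv)\|_{X^{s,b-1}}\lesssim\|u\|_{X^{s,b}}\|v\|_{X^{s,b}}$, valid for $s>\tfrac12$; for general $f\in C^6$ I would reduce to this case by a finite Taylor expansion of $f'$ with integral remainder, absorbing the remainder through the algebra estimate and the uniform control of $\|u\|_{L^\infty}$. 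Marrying the fibrewise composition bound to the resonance-based recovery of the lost derivative, in the periodic setting where one must also keep track of the zero mode (handled by the conservation of $\int u\,dx$ and a gauge change) and of the weaker dispersive gain, is the delicate point.

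Once \eqref{nlest} is available the argument closes routinely. I would let $\Phi(u)$ denote the right-hand side of \eqref{duham} with the time cutoff inserted, use \eqref{duhamest}--\eqref{nlest} to show that $\Phi$ maps a ball of $X^{s,b}$ into itself and is a contraction once $T=T_0(\|u_0\|_{H^s_{per}})$ is small, and invoke the Banach fixed point theorem to obtain a unique fixed point on $[-T_0,T_0]$. The embedding $X^{s,b}\hookrightarrow C([-T_0,T_0];H^s_{per})$ then yields the stated regularity and shows that the fixed point solves the associated integral equation, which is the sense in which Theorem~\ref{teoregKDV} is to be understood; the Lipschitz dependence of $\Phi$ on $u_0$ gives uniqueness and the continuity of the data-to-solution map as a byproduct.
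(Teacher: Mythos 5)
You should first know what the paper actually does here: it gives no proof at all, simply citing Theorem 1.3 of Hu and Li \cite{Hu}, whose argument is indeed a Fourier restriction (Bourgain-space / discrete restriction) argument. So your framework is the right one in spirit; the problem is that the two reductions you propose for the step you yourself call the crux are both invalid, so the sketch does not amount to a proof. (i) You cannot get the multilinear estimate for $\partial_x(u^j)$ ``by iterating the basic bilinear estimate'': that iteration would require the product of two $X^{s,b}$ functions to lie again in $X^{s,b}$, and $X^{s,b}$ is not an algebra. Concretely, take $u_1=e^{i(Nx+N^3t)}$ and $u_2=e^{i(Mx+M^3t)}$, both supported on the cubic curve; their product is a plane wave at frequency $N+M$ with modulation $3NM(N+M)$, so $\|u_1u_2\|_{X^{s,b}}\sim\langle N+M\rangle^{s}\bigl(NM(N+M)\bigr)^{b}$, which is in no way controlled by $\|u_1\|_{X^{s,b}}\|u_2\|_{X^{s,b}}\sim\langle N\rangle^{s}\langle M\rangle^{s}$ for $s$ near $1/2$. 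The very resonance factor $3kk_1k_2$ that you invoke to ``supply the smoothing'' is what pushes products off the curve; each $j$-linear estimate must be proved directly, and this is precisely what the discrete restriction estimates of \cite{Hu} are for.

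(ii) The reduction of a general $C^6$ nonlinearity to monomials is also broken. A Moser/algebra bound applied fibrewise in time controls the Taylor remainder $\tilde R(u)$ (with $\tilde R(u)=O(u^6)$) only as $\partial_x\tilde R(u)\in L^\infty_t H^{s-1}_x$, i.e., in $X^{s-1,0}$, and $X^{s-1,0}\not\subset X^{s,b-1}$: that inclusion would require $\langle k\rangle\lesssim\langle\tau-k^3\rangle^{1-b}$, which fails for large $k$ and small modulation. So for the non-polynomial part a full derivative is left unrecovered, with no mechanism to recover it --- exactly the difficulty that makes the $C^6$ case nontrivial. Relatedly, your parenthetical fix ``conservation of $\int u\,dx$ and a gauge change'' removes the zero mode of $u$, but the dangerous zero mode is that of the coefficient $f''(u)$ (equivalently of the remainder), which is not zero even when $u$ has mean zero; its interaction with $\partial_x u$ is a resonant transport term $c(t)\partial_x u$ with solution-dependent $c(t)$, which cannot be closed perturbatively in $X^{s,b}$ and requires a renormalization adapted to the solution. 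A smaller but real inaccuracy: the Duhamel bound as you wrote it, with input norm $X^{s,b-1}$, carries no positive power of $T$ (the gain is $T^{1-b+b'}$, which vanishes at $b'=b-1$), so the contraction needs the nonlinear estimate to land in $X^{s,b-1+\delta}$ for some $\delta>0$, or a different mechanism for smallness. Given all this, if you do not intend to reprove the estimates of \cite{Hu}, the honest proof of this theorem is the paper's: quote it.
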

\begin{proof}
See Theorem 1.3 in \cite{Hu}.
\end{proof}

In addition, the smoothness of the function $f$ in Theorem $\ref{teoregKDV}$ enable
us to establish that
\be
M(u(t))=M(u_0),\ \ \ \ \ \ F(u(t))=F(u_0),\ \ \mbox{for all}\
t\in[-T_0,T_0],\label{modconserved2}
\ee
and
\be\label{modconserved1}
\widetilde{E}(u(t))=\widetilde{E}(u_0),\ \
\mbox{for all}\ t\in[-T_0,T_0],
\ee
where $\widetilde{E}$ is the modified energy, defined as,
\be\label{conserved1}
\widetilde{E}(v)=\frac{1}{2}\int_0^Lv_x^2dx-\int_0^LW(v)dx,\ \
W(v):=\int_0^vf(s)ds.
\ee
As a consequence of the above conservation laws, we deduce if $u_0$ belongs to  $H^1_{per}([0,L])$, then the solution obtained in Theorem \ref{teoregKDV} can be extended globally-in-time.

It is obvious that  $f(u)=u\log|u|$ does not satisfy the assumption in Theorem \ref{teoregKDV}. The contrivance then is to regularize the nonlinearity. To do so,  for any $\ve>0$, let us define the family of regularized nonlinearities in the form
\be\label{reglog}
f_{\ve}(u)=\left\{\begin{array}{lllll}
f(u),\ \ \ |u|\geq\ve,\\
p_{\ve}(u),\ \ |u|\leq\ve,
\end{array}\right.
\ee
where $f(u)=u\log(|u|)$ and $p_{\ve}$ is the  polynomial of degree 13 defined by
$$p_{\ve}(u):=\left(\log(\ve)-\frac{1}{2}\right)u+\sum_{i=1}^{6}\frac{a_i}{\ve^{2i}}u^{2i+1},$$
with $a_i\in\mathbb{R}$, $1\leq i\leq 6$,  determined by
using the  equality $\partial_u^kp_{\ve}(\ve) =\partial_u^kf(\ve)$, for
all $0\leq k\leq 6$.

Next, we consider the approximate Cauchy problem
\be\label{regIVP}
\left\{\begin{array}{lllll}
u_t^{\ve}+u_{xxx}^{\ve}+f_{\ve}'(u^{\ve})u_x^{\ve}=0, \ \ \ \ t>0,\\
u^{\ve}(x,0)=u_0(x),\ \ \ \ \ \ \ \ \ \ \ \ \
\end{array} \right. \ee and assume that $u_0\in
H^1_{per}([0,L])$. Theorem \ref{teoregKDV} implies the existence of
global solutions $u^\ve$ in $C(\R;H^1_{per}([0,L]))$. The remainder
of the proof follows similarly from the arguments in \cite{carles}.
Indeed, in order to pass the limit in $(\ref{regIVP})$ and proving
the existence of weak solutions associated with the original problem
$(\ref{logKDV1})$, it makes necessary to obtain uniform estimates,
independent of $\ve>0$, for the regularized solution $u^{\ve}$.
After that, by using some compactness tools, we are in position to
obtain the solution $u$ as a weak limit of the sequence $u^{\ve}$.
The uniqueness of solutions is proved once we assume that $u$
satisfies $\partial_x (\log|u|)\in
L^{\infty}(\mathbb{R};L_{per}^{\infty}([0,L]))$. Thus, the solution $u$
exists in $C(\mathbb{R};X)$, is unique
and satisfies $F(u(t))=F(u_0)$, $M(u(t))=M(u_0)$ and $E(u(t))=E(u_0)$ for all
$t\in\mathbb{R}$. The existence of the conserved quantities can be
determined by following the arguments in \cite[Theorem
3.3.9]{Cazenave2} for the general nonlinear Schr\"odinger equation.
Theorem \ref{teoWP} is thus proved.

\section{Existence of periodic traveling waves and spectral analysis - An extension of \cite{natali1}.} \label{secex}

\subsection{Existence of periodic waves}
Our purpose in this subsection is to study the existence of
periodic solutions for nonlinear ODE's written in the
form
\be
 - \phi'' + g(\mu,\phi) = 0, \label{ode}
\ee
where $g:\mathcal{P}\times\R\to\R$. It is assumed that $\mathcal{P}\subset\R^n$, $n\geq1$, is an open set, $g(\cdot,\phi)$ is smooth in $\mathcal{P}$ and $g(\mu,\cdot)$ is, at least, locally lipschitzian. This implies that a uniqueness theorem for the initial-value problem associated to \eqref{ode} holds.

The subject-matter here follows from the approach in \cite{natali1} but, for the sake of completeness, we shall give the main steps. Equation (\ref{ode}) is
conservative, and thus its solutions are contained on the level
curves of the energy
\[
\mathcal{E}(\phi, \xi) := - \frac{\xi^2}{2} + G(\mu,\phi),
\]
where $\xi=\phi'$ and  $ \partial G/ \partial \phi = g$ with
$G(\mu, 0)=0$.

We assume the following.
\begin{itemize}
\item[{\bf (H1)}] For any $\mu \in \mathcal{P}$, the function $g(\mu,\cdot)$ has two
consecutive zeros $r_1<r_2$,   such that the corresponding
equilibrium points $ (\phi, \xi) = (r_1,0)$ and $ (\phi, \xi) =
(r_2,0)$ are saddle and center, respectively.
\item[{\bf (H2)}] The level curve $\mathcal{E}(\phi, \xi)
=\mathcal{E}(r_1,0)$ contains a simple closed curve $\Gamma$
that contains $(r_2,0)$ in its interior.
\item[{\bf (H3)}] For $(\phi, \xi) $ inside  $\Gamma$
and $\mu\in \mathcal{P}$, the function $ g(\mu, \phi)$
is of class $C^1$  and $g'(\mu,r_2) <0$, where $g'$ denotes
the derivative of $g$ with respect to $\phi$.
\end{itemize}

The orbits of (\ref{ode})  inside $\Gamma$ are
periodic, turn  around $(r_2,0)$, and are contained on
the level curves $\mathcal{E}(\phi, \xi) = B$, for
$\mathcal{E}(r_1,0) < B < \mathcal{E}(r_2,0)$. Moreover, we may
suppose, without loss of generality, that the initial condition of
such solutions $(\phi(0), \phi'(0))= (\phi(0), 0)$ is inside
$\Gamma$ and $\phi(0) > r_2 $. Then, due to the symmetry of the
problem, the corresponding solutions of \eqref{ode} are periodic and even.

\begin{theorem} Under assumptions {\rm {\bf (H1)}}-{\rm {\bf (H3)}}, for every $\mu \in \mathcal{P}$, there is $L_{\mu}\in
(\alpha,+\infty)$ such that  equation (\ref{ode}) has an (even) $L_{\mu}$-periodic solution, say, $\phi_\mu$. Here, $\alpha=\alpha(\mu)$
is the period of the solutions of the linearization of
$(\ref{ode})$ at the equilibrium point $(r_2,0)$. Moreover,  $\phi_{\mu}$ and  $L_{\mu}$ are
continuously differentiable with respect to  $\mu\in\mathcal{P}$.
\label{existence}
\end{theorem}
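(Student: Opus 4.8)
The plan is to read off the periodic orbit for each fixed $\mu$ from the phase portrait furnished by {\bf (H1)}--{\bf (H3)}, and then to promote this to a $C^1$ family in $\mu$ via smooth dependence of solutions on data together with the implicit function theorem. Writing \eqref{ode} as the planar system $\phi'=\xi$, $\xi'=g(\mu,\phi)$, hypotheses {\bf (H1)}--{\bf (H2)} say that $(r_2,0)$ is a center enclosed by the simple closed curve $\Gamma\subset\{\mathcal{E}=\mathcal{E}(r_1,0)\}$, so that every orbit strictly inside $\Gamma$ is periodic and winds once around $(r_2,0)$. I would parametrize these orbits by their right turning point $\beta:=\phi(0)\in(r_2,\Phi(\mu))$, where $\Phi(\mu)$ denotes the abscissa of the rightmost crossing of $\Gamma$ with $\{\xi=0\}$, and take initial data $(\phi(0),\phi'(0))=(\beta,0)$. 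Denoting the resulting solution by $\phi(\mu,\beta;\cdot)$, local Lipschitz continuity of $g(\mu,\cdot)$ gives uniqueness, and applying uniqueness to $t\mapsto\phi(\mu,\beta;-t)$ (which solves the same initial-value problem) shows that each such solution is even.

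Conservation of $\mathcal{E}$ along the orbit through $(\beta,0)$ gives $\xi^2=2\big(G(\mu,\phi)-G(\mu,\beta)\big)$, so the two turning points are $\phi_1(\mu,\beta)<r_2<\beta$, the solutions of $G(\mu,\cdot)=G(\mu,\beta)$ bracketing $r_2$, and the period equals
\[
T(\mu,\beta)=\sqrt{2}\int_{\phi_1(\mu,\beta)}^{\beta}\frac{d\phi}{\sqrt{G(\mu,\phi)-G(\mu,\beta)}}.
\]
The standard asymptotics of this integral yield $T(\mu,\beta)\to\alpha(\mu)=2\pi/\sqrt{-g'(\mu,r_2)}$ as $\beta\to r_2^+$ (the orbit collapses onto the nondegenerate center) and $T(\mu,\beta)\to+\infty$ as $\beta\to\Phi(\mu)^-$ (the orbit limits onto $\Gamma$, which lies on the saddle's energy level and contains $(r_1,0)$, where the flow slows down). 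Since $T(\mu,\cdot)$ is continuous, for $\beta$ close to $\Phi(\mu)$ one has $T(\mu,\beta)>\alpha(\mu)$, which already produces, for every $\mu$, an even periodic solution whose period $L_\mu$ lies in $(\alpha(\mu),+\infty)$.

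It remains to arrange $C^1$ dependence on $\mu$, and here I would avoid differentiating the improper integral. Since $g$ is smooth in $\mu$ and of class $C^1$ in $\phi$ inside $\Gamma$, the flow map $(\mu,\beta,t)\mapsto\phi(\mu,\beta;t)$ is $C^1$. Characterize the half-period as the first time $\tau=\tau(\mu,\beta)>0$ at which $\phi'$ vanishes again; at that instant $\phi(\tau)=\phi_1\in(r_1,r_2)$, and because $r_1,r_2$ are \emph{consecutive} zeros of $g(\mu,\cdot)$ we have $g(\mu,\phi_1)\neq0$, hence
\[
\partial_t\phi'(\mu,\beta;\tau)=\phi''(\tau)=g(\mu,\phi_1)\neq0.
\]
Applying the implicit function theorem to $(\mu,\beta,t)\mapsto\phi'(\mu,\beta;t)=0$ at $(\mu,\beta,\tau)$ shows that $\tau$, and therefore $T=2\tau$, is $C^1$ in $(\mu,\beta)$. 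Fixing a value $\beta_0$ as in the previous paragraph and setting $\phi_\mu:=\phi(\mu,\beta_0;\cdot)$ and $L_\mu:=T(\mu,\beta_0)$, continuity of the zeros $r_2(\mu)$ and of $\Phi(\mu)$ keeps $\beta_0\in(r_2(\mu),\Phi(\mu))$ for $\mu$ near the base point, so $\phi_\mu$ and $L_\mu$ are $C^1$ in $\mu$; continuity of $L_\mu$ and $\alpha(\mu)$ then preserves $L_\mu>\alpha(\mu)$ on a neighborhood, completing the construction.

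The delicate point throughout is the regularity of the period function: the integral for $T$ is improper at both turning points, so establishing even continuity of $T$ directly from it would require a desingularizing substitution near $\phi_1$ and $\beta$. Recasting the period through the turning time $\tau$ and invoking the implicit function theorem, as above, is what renders the smooth dependence transparent, and I expect this to be the step carrying the real weight of the argument.
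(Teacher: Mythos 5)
Your proposal is correct, and the existence part (phase-plane analysis inside $\Gamma$, evenness via time-reversal and uniqueness, period blow-up near $\Gamma$) matches the paper. Where you genuinely diverge is in the key step both arguments must confront: the period integral $2\int_{b_1}^{b_2}\left(2G(\phi)-2B\right)^{-1/2}d\phi$ is improper at the turning points, so it cannot be differentiated naively in $\mu$. The paper's remedy is global: it deforms the ellipse of the linearization at $(r_2,0)$ onto the orbit $\Lambda$ by applying the Implicit Function Theorem to the three-equation system $(F,G,H)=(0,0,0)$, obtaining a parametrization of the orbit over the fixed interval $[0,2\pi]$ and hence the proper-integral formula \eqref{L2} for $L_\mu$, which is manifestly $C^1$ in $(\mu,B)$. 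Your remedy is local and more elementary: you characterize the half-period as the first turning time $\tau$ and apply the scalar Implicit Function Theorem to $\phi'(\mu,\beta;t)=0$, using $\phi''(\tau)=g(\mu,\phi_1)\neq0$ (a nondegeneracy that, incidentally, follows even more directly from uniqueness: if $(\phi_1,0)$ were an equilibrium the orbit through it would be constant). Each route buys something. Yours avoids the somewhat delicate construction of the deformation and needs only $C^1$ dependence of the flow plus the scalar IFT, so it is shorter and arguably cleaner for the bare statement of Theorem \ref{existence}. The paper's construction, however, produces the explicit computable formula \eqref{perL}, which is precisely what is used downstream: Corollary \ref{coro1} and the numerical determination of $L_0$ and $\theta$ in Section 4 rest on it, and the same parametrization makes the limit $L_\mu\to\alpha$ as $B\to\mathcal{E}(r_2,0)$ (needed for the surjectivity in Corollary \ref{coro2}) transparent, whereas you only invoke this limit as ``standard asymptotics'' without proof. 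One last remark: both you and the paper implicitly assume $\Gamma$ contains the equilibrium $(r_1,0)$ when concluding that the period blows up near $\Gamma$ (a closed curve in that energy level free of equilibria would instead have finite period); since this is the paper's own reading of {\bf (H2)}, it is not a gap in your argument relative to theirs.
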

\begin{proof} For every $\mu\in\mathcal{P}$,  the earlier arguments show that (\ref{ode}) has at least one periodic solution
$\phi_{\mu}$ with period, say, $L_{\mu}$, lying on the levels of energy $B$, for
$\mathcal{E}(r_1,0) < B <\mathcal{E}(r_2,0)$. The continuous differentiability of such solutions with respect to $\mu$ is a
consequence of the general ODE theory. Fix $\mu\in\mathcal{P}$, the period of
$\phi_{\mu}$ is given by the line integral
\be
L=\int_{\Lambda} \frac{1}{|v|}\; ds,
\label{L1}
\ee
where $\Lambda$
is the graph of $(\phi, \xi)$ in the energy level
$B$, $v(\phi, \xi) = (\xi, g(\phi))\in\R^2$ is the
vector field associated with (\ref{ode}), and $|\cdot|$ denotes the Euclidean
norm. The upper part of $\mathcal{E}(\phi,\xi) = B$ can be written
as $\xi=\sqrt{2G(\phi)-2B}$, where, for short, $G(\phi)=G(\mu,\phi)$. Thus,
\[
L = 2 \int_{b_1}^{b_2} \frac{1}{\xi(\phi)}  d\phi = 2
\int_{b_1}^{b_2} \frac{1}{\sqrt{2G(\phi) - 2 B}}  d{\phi},
\]
where $b_1, b_2$ are the roots of  $\mathcal{E}(\phi, 0) = B$. This
formula is used to compute the period $L$, but it is
inappropriate to study its differentiability with respect to
$\mu$, since $b_1, b_2$ also depend on  $\mu$ and $1/\xi(\phi)$ is singular at the end points. To overcome this, we will look
for a suitable parametrization of $\Lambda$. The linearization
of (\ref{ode}) at the equilibrium point $(r_2,0)$ is
\[
- y'' + g'(r_2)\; y = 0,
\]
where, for simplicity, we write  $g'(r_2)$ instead of $
g'(\mu,r_2)$. The  solutions of this equation are periodic
with period
\be \label{alpha}
\alpha = \frac{2\pi}{ \sqrt{-g'(r_2)}},
\ee
and their orbits are ellipses around the origin:
\[
g'(r_2)\;\frac{ y^2}{2} -\frac{y'^2}{2}   = D.
\]
For $D=-1/2$, this ellipse can be parameterized by the smooth curve
$\gamma(t)$, $t\in[0,2\pi]$, given by

\[ \gamma(t)=\left( \frac{1}{\sqrt{- g'(r_2)}} \cos{t}, \;
\sin{t}\right).
\]
The appropriate parameterization of $\Lambda$ can be obtained
through the deformation of the ellipse into the curve $\Gamma$.
Consider the system $(F,G,H)=(0,0,0)$, where
\begin{eqnarray*}
F & = & \phi - r_2 - \frac{1}{\sqrt{- g'(r_2)}}\; r \cos{t},  \\
G &=& \xi - r \sin{t}, \\
H &=& - \xi^2 + 2  G(\mu,\phi) - 2B.
\end{eqnarray*}
An application of the Implicit Function Theorem reveals one can obtain $\phi,\xi,$ and $r$ as functions of the variables $(t,\mu,B)$ and
\[
\frac{\partial \phi}{\partial t} = \frac{2 \xi r}{D \sqrt{-g(r_2)}}
\qquad \mbox{ and} \qquad \frac{\partial \xi}{\partial t} = \frac{2
r g(\phi)}{D \sqrt{-g'(r_2)}}.
\]
Therefore, from (\ref{L1}) one has that $L$ depends differentially
on the parameter $\mu$ and
\be
L_{\mu} =
\frac{2}{\sqrt{-g'(r_2)}} \int_0^{2\pi} \frac{- r}{D}\; dt.
\label{L2}
\ee
In addition, since the solutions converge uniformly on compact
intervals to $\Gamma$ (in the phase space), it is easy to see that $L_{\mu}$ goes
to infinity as $B$ goes to $\mathcal{E}(r_1,0)$.

It remains to show that $L_{\mu} \rightarrow \alpha$ as $B
\rightarrow \mathcal{E}(r_2,0)$. Since
\[
g(\phi) = g'(r_2) (\phi - r_2) + O((\phi - r_2)^2),\quad \phi - r_2 = \frac{1}{\sqrt{-g'(r_2)}}\; r \cos{t},\quad \mbox{and} \quad  \xi = r \sin{t},
\]
we obtain that $D$  satisfies
\[
D = - 2r + O((\phi - r_2)^2).
\]
Therefore, since $\phi \rightarrow r_2$, as $B \rightarrow
\mathcal{E}(r_2,0)$,
\[
L_{\mu} = \frac{2}{\sqrt{-g'(r_2)}} \int_0^{2\pi} \frac{-
r}{D}\; dt \longrightarrow \alpha = \frac{2\pi}{ \sqrt{-g'(r_2)}},
\]
as $B \rightarrow \mathcal{E}(r_2,0)$. The proof of the theorem is thus completed.
\end{proof}

\begin{remark}\label{rem1}
Theorem \ref{existence} is still true if we drop the  assumptions {\bf (H1)}-{\bf (H3)} and assume weaker conditions. In fact, it suffices to assume that $g(\mu,\cdot)$ has a zero, say, $r_2$, which is a local maximum of $G(\mu,\cdot)$. In this case, all orbits in a neighborhood of $(r_2,0)$ must be periodic orbits symmetric with respect to the $\phi$-axis in the $(\phi,\xi)$-plane (see e.g., \cite[page 178]{jack}).
\end{remark}

The proof of Theorem \ref{existence} yields an alternative formula of how
to compute the period of the solutions. In order to
apply it, we set
\begin{eqnarray}
\phi & = & r_2 + \frac{r(t)}{\sqrt{-g'(r_2)}} \; \cos{t}, \nonumber\\
\xi & = & r(t) \; \sin{t},  \label{D2}\\
D &=& \frac{ 2 g(\phi)}{\sqrt{-g'(r_2)}} \; \cos{t} - 2 \xi \;
\sin{t}, \nonumber
\end{eqnarray}
with $g(\phi)= g(\mu,  \phi)$,  $ \mu \in \mathcal{P}
$, and  $r(t)$ the solution of the first order initial-value
problem
\be \left\{ \begin{array}{l}
D r' = 2r \left( \frac{ g(\phi)}{\sqrt{-g'(r_2)}} \; \sin{t} + \xi  \cos{t} \right) \\[5mm]
 r(0) = \sqrt{-g'(r_2)}\;(\phi(0) - r_2),
 \end{array} \right.
\label{r} \ee where  $\phi(0) > r_2$, is the initial
condition of $\phi$. Thus, we have proved the following.

\begin{corollary} \label{coro1}
Let  $r(t)$ and $D(t)$ be defined as above and let $\phi_\mu$, $\mu\in\mathcal{P}$ be a periodic solution of (\ref{ode}) with initial condition $\phi(0)$. Then, the period of $\phi_\mu$  is given by
\be\label{perL}
L_{\mu} = \frac{2}{\sqrt{-g'(r_2)}}
\int_0^{2\pi} \frac{- r(t)}{D(t)}\; dt. \ee
\end{corollary}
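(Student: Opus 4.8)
The plan is to read the statement as an immediate consequence of the proof of Theorem \ref{existence}: formula \eqref{perL} is nothing but \eqref{L2}, so the only thing to verify is that the explicitly defined objects in \eqref{D2}--\eqref{r} reproduce exactly the parametrization of the orbit $\Lambda$ used in that proof. I would organize the argument into three steps, the first identifying $D(t)$, the second deriving the ODE for $r$, and the third converting the line integral \eqref{L1} into the claimed form.

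First, I would show that $D(t)$ in \eqref{D2} coincides with the Jacobian determinant of the map $(\phi,\xi,r)\mapsto(F,G,H)$ from the proof of Theorem \ref{existence}. A direct expansion of $\det J$ along the column carrying $\partial_r F,\partial_r G,\partial_r H$ gives $\det J=\frac{2g(\phi)}{\sqrt{-g'(r_2)}}\cos t-2\xi\sin t$, which is precisely the expression defining $D$. On the region where $D\neq0$, the Implicit Function Theorem then produces $C^1$ functions $\phi(t),\xi(t),r(t)$ solving $(F,G,H)=(0,0,0)$; since in particular $H\equiv0$, the pair $(\phi(t),\xi(t))$ stays on the level set $\mathcal{E}=B$ and therefore parametrizes the orbit $\Lambda$ through $(\phi(0),0)$.

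Second, I would compute the derivatives by Cramer's rule applied to $J\,(\phi',\xi',r')^{T}=-(F_t,G_t,H_t)^{T}$. With $F_t=\frac{r\sin t}{\sqrt{-g'(r_2)}}$, $G_t=-r\cos t$ and $H_t=0$, the three determinants reproduce the relations already quoted in the proof of Theorem \ref{existence}, namely $\phi'=\frac{2\xi r}{D\sqrt{-g'(r_2)}}$ and $\xi'=\frac{2rg(\phi)}{D\sqrt{-g'(r_2)}}$, together with the new one $r'=\frac{2r}{D}\bigl(\frac{g(\phi)}{\sqrt{-g'(r_2)}}\sin t+\xi\cos t\bigr)$, which is exactly the differential equation in \eqref{r}. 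The initial condition $r(0)=\sqrt{-g'(r_2)}(\phi(0)-r_2)$ is read off from the first line of \eqref{D2} evaluated at $t=0$, where $\xi(0)=0$ forces $\phi(0)=r_2+r(0)/\sqrt{-g'(r_2)}$.

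Third, I would rewrite the period integral \eqref{L1} in the variable $t$. Using $ds=\sqrt{(\phi')^{2}+(\xi')^{2}}\,dt$ and $|v|=\sqrt{\xi^{2}+g(\phi)^{2}}$, the two formulas for $\phi',\xi'$ yield $(\phi')^{2}+(\xi')^{2}=\frac{4r^{2}}{D^{2}(-g'(r_2))}|v|^{2}$, so that $\frac{ds}{|v|}=\frac{2|r|}{|D|\sqrt{-g'(r_2)}}\,dt$ and hence $L_\mu=\frac{2}{\sqrt{-g'(r_2)}}\int_0^{2\pi}\frac{|r|}{|D|}\,dt$. Matching this with \eqref{perL} reduces to the sign identity $-r/D=|r|/|D|$. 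The main obstacle I anticipate is exactly this sign bookkeeping, together with the need for the parametrization to be nondegenerate over the whole interval $t\in[0,2\pi]$; both are controlled by the asymptotics established at the end of the proof of Theorem \ref{existence}, $D=-2r+O((\phi-r_2)^{2})$. For orbits obtained by deforming the linearized ellipse one has $r>0$ (coming from $\phi(0)>r_2$) and $D<0$, so $D$ never vanishes, the Implicit Function Theorem applies along all of $[0,2\pi]$, and $-r/D$ is the positive arc-length density $|r|/|D|$. I would accordingly state the corollary in the regime where this deformation remains nondegenerate, which is precisely the setting in which Theorem \ref{existence} was proved.
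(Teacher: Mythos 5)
Your proposal is correct and takes essentially the same route as the paper: the paper's own ``proof'' of Corollary \ref{coro1} is just the remark that \eqref{perL} is formula \eqref{L2} from the proof of Theorem \ref{existence}, with \eqref{D2}--\eqref{r} packaging the implicit-function-theorem parametrization, and your three steps (identifying $D$ with the Jacobian determinant of $(F,G,H)$, recovering $\partial_t\phi$, $\partial_t\xi$ and the ODE for $r$ via Cramer's rule, and converting the line integral \eqref{L1}) simply make explicit the computations the paper leaves implicit. The one soft spot in your argument --- deducing $r>0$, $D<0$ along the \emph{whole} orbit from an asymptotic expansion valid only near the center --- is treated no more carefully in the paper itself, and can in any case be repaired globally by noting that the parametrization gives $D=\frac{2}{r}\left(g(\phi)(\phi-r_2)-\xi^2\right)<0$ for every orbit inside $\Gamma$, since there $g(\phi)(\phi-r_2)\leq 0$ and $\xi\neq 0$ whenever $\phi=r_2$.
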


Let $\mu\in\mathcal{P}$ be given. The next result shows if the parameter $\alpha=\alpha(\mu)$ does not depend on $\mu\in\mathcal{P}$ we can obtain $L$-periodic solutions of \eqref{ode} for any $L>\alpha$.

\begin{corollary}\label{coro2}
Assume $\alpha=\alpha(\mu)$ does not depend on $\mu\in\mathcal{P}$. Then, the period map $\mu\in\mathcal{P}\mapsto L_{\mu}\in
(\alpha,+\infty)$ obtained in Theorem $\ref{existence}$ is onto.
\end{corollary}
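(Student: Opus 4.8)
The plan is to obtain surjectivity from three ingredients supplied by Theorem \ref{existence}: the continuity of the period map on $\mathcal{P}$, the connectedness of the parameter domain, and the two limiting regimes of the period at the ends of the family of periodic orbits. The constancy of $\alpha$ is what makes the target interval $(\alpha,+\infty)$ the same for every $\mu$, and hence what pins the lower end of the image.

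First I would record that, by Theorem \ref{existence}, the map $\mu\in\mathcal{P}\mapsto L_\mu$ is continuous and satisfies $L_\mu>\alpha$; assuming, as in the applications, that $\mathcal{P}$ is connected, its image $I$ is then a subinterval of $(\alpha,+\infty)$. It thus suffices to show that $\inf I=\alpha$ and $\sup I=+\infty$ with neither value attained, for then $I=(\alpha,+\infty)$ and the map is onto. To locate the endpoints I would revisit the two degenerations of the orbit $\phi_\mu$ analysed in the proof of Theorem \ref{existence}, now driven by $\mu$. Using the parametrization \eqref{D2}--\eqref{r} and formula \eqref{perL}, which under $\alpha(\mu)\equiv\alpha$ reads $L_\mu=(\alpha/\pi)\int_0^{2\pi}(-r/D)\,dt$, the small-orbit regime $D=-2r+O((\phi-r_2)^2)$ gives $-r/D\to 1/2$ and hence $L_\mu\to\alpha^+$, while the regime in which the orbit swells toward the homoclinic loop $\Gamma$ makes the integral diverge and $L_\mu\to+\infty$, exactly as in Theorem \ref{existence}. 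Because $\alpha$ is the same constant for every $\mu$, the first limit forces $\inf I=\alpha$ and the second forces $\sup I=+\infty$; as $\alpha<L_\mu<+\infty$ throughout, neither endpoint belongs to $I$, so $I=(\alpha,+\infty)$.

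The main obstacle is to realize both degenerations by moving $\mu$ inside $\mathcal{P}$, rather than by moving the energy level at a fixed $\mu$. Concretely, with the orbit selected through a prescribed initial point, one must exhibit sequences $\mu_k\in\mathcal{P}$ along which the centre $r_2=r_2(\mu_k)$ moves up to that point, so the orbit collapses and $L_{\mu_k}\to\alpha$, together with sequences along which the saddle level $\mathcal{E}(r_1,0)$ is approached, so the orbit swells to the separatrix and $L_{\mu_k}\to+\infty$, thereby sweeping the whole band $\mathcal{E}(r_1,0)<B<\mathcal{E}(r_2,0)$. It is precisely here that the hypothesis $\alpha(\mu)\equiv\alpha$ is decisive: it locks the small-amplitude period at the single value $\alpha$ for every $\mu$, so the infimum of the image cannot exceed $\alpha$ and the intermediate value theorem yields exactly $(\alpha,+\infty)$.
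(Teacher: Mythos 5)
Your second paragraph is essentially the paper's own proof: Theorem \ref{existence} supplies continuity of the period together with the two limits $L\to\alpha$ and $L\to+\infty$, and the intermediate value theorem finishes. The gap appears in your third paragraph, where you insist that both degenerations be realized by moving $\mu$ inside $\mathcal{P}$ with the orbit pinned by a prescribed initial point, and you say one ``must exhibit sequences $\mu_k$'' along which the orbit collapses to the center or swells to the separatrix --- but you never exhibit them, and in general they do not exist. The limits established in the proof of Theorem \ref{existence} are taken as the \emph{energy level} $B$ tends to $\mathcal{E}(r_2,0)$ and $\mathcal{E}(r_1,0)$ at \emph{fixed} $\mu$: for each $\mu$ the orbits inside $\Gamma$ form a one-parameter family indexed by $B\in\bigl(\mathcal{E}(r_1,0),\mathcal{E}(r_2,0)\bigr)$, and it is this parameter, not $\mu$, that sweeps the period from $\alpha$ to $+\infty$. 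That is exactly how the paper argues: $L$ is continuous, $L\to\alpha$ as $B\to\mathcal{E}(r_2,0)$, $L\to+\infty$ as $B\to\mathcal{E}(r_1,0)$, hence every value in $(\alpha,+\infty)$ is attained; the hypothesis that $\alpha$ is independent of $\mu$ serves only to make the target interval $(\alpha,+\infty)$ the same for every $\mu$, which is what is used downstream (e.g.\ Proposition \ref{proexazero}, where an $L$-periodic wave is produced for \emph{every} $\omega$ once $L>\sqrt{2}\,\pi$ is fixed).

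To see concretely that your $\mu$-driven scheme would fail, note that nothing in {\bf (H1)}--{\bf (H3)} forces the phase portrait to degenerate as $\mu$ moves: in the extreme case where $g(\mu,\cdot)$ is actually independent of $\mu$ (allowed by the hypotheses, and then $\alpha(\mu)$ is trivially constant), the map $\mu\mapsto L_\mu$ with a fixed selection of orbit per $\mu$ is constant, so no sequences $\mu_k$ of the kind you require exist --- yet the corollary remains true, precisely because $B$ sweeps the whole band at each $\mu$. A side remark: the connectedness of $\mathcal{P}$ you add is neither in the statement nor needed; the intermediate value theorem is applied in $B$, over the connected interval $\bigl(\mathcal{E}(r_1,0),\mathcal{E}(r_2,0)\bigr)$, at each fixed $\mu$. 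If you replace your third paragraph by this observation, your argument reduces to the paper's.
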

\begin{proof}
In fact, from Theorem $\ref{existence}$ one has that
$L_{\mu}$ is a continuously differentiable map with respect
to $\mu\in\mathcal{P}$ satisfying $L_{\mu}\rightarrow
\alpha$, as $B\rightarrow\mathcal{E}(r_2,0)$, and
$L_{\mu}\rightarrow+\infty$, as
$B\rightarrow\mathcal{E}(r_1,0)$. The result is thus proved.
\end{proof}

Our next step is to show the existence of a family,
$\mu\mapsto\psi_{\mu}$,  where each $\psi_{\mu}$ has the same
fixed period, solves equation (\ref{ode}) and  depends smoothly on
 $\mu$, for $\mu$ in a convenient open set
$\mathcal{O} \subset \mathcal{P}$. Before that, we need some basic
concepts. Let $\phi_{\mu}$ be a periodic solution of
(\ref{ode}) with period $L_{\mu}$ obtained in Theorem
\ref{existence}. Let $\mathcal{L}_{\mu}$ be the
linearized operator arising from the linearization of (\ref{ode}) at
$\phi_{\mu}$, that is,
 \be
\mathcal{L}_{\phi_{\mu}}(y):=\mathcal{L}_{\mu}(y) = -
y'' + g'(\mu, \phi_{\mu})\, y, \;\;\; \mu\in
\mathcal{P}. \label{hill}
\ee
Therefore, $\mathcal{L}_{\mu}$
is an Hill's operator, and, according to Floquet's theory (see e.g.,
\cite{magnus}), its spectrum  is formed by an unbounded sequence of
real eigenvalues
\[
\lambda_0 < \lambda_1 \leq \lambda_2 \leq \lambda_3 \leq \lambda_4\leq
\cdots\; \leq \lambda_{2n-1} \leq \lambda_{2n}\; \cdots,
\]
where equality means that $\lambda_{2n-1} = \lambda_{2n}$  is a
double eigenvalue. In addition, the spectrum of
$\mathcal{L}_{\mu}$ is characterized by the number of zeros
of the eigenfunctions in the following way: if $p$ is an eigenfunction associated to either $\lambda_{2n-1}$ or $\lambda_{2n}$, then $p$  has exactly
$2n$ zeros in the half-open
interval $[0, L_{\mu})$. In particular, the eigenfunction associated to the eigenvalue $\lambda_0$ has no zeros in $[0, L_{\mu})$.

By taking the derivative with respect to $x$ is \eqref{ode}, it is evident that $\phi'_\mu$ belongs to the kernel of the operator  $\mathcal{L}_{\mu}$, which we shall denote by $\ker(\mathcal{L}_{\mu})$. This means that
\begin{equation}\label{kerphi}
[\phi_\mu']\subseteqq \ker(\mathcal{L}_{\mu}).
\end{equation}

The next theorem proves if equality holds in \eqref{kerphi} then we can obtain the smooth family of $L$-periodic waves.

\begin{theorem}\label{existcurve}
Fix $\mu_0\in\mathcal{P}$ and let $\phi_{\mu_0}$ be an even $L_0$-periodic solution of $(\ref{ode})$ obtained in Theorem \ref{existence}, where  $L_0:=L_{\mu_0}\in(\alpha(\mu_0),+\infty)$.  Let
$\mathcal{L}_{\mu_0}$ be the linearized operator as in \eqref{hill}. If
$\ker(\mathcal{L}_{\mu_0})=[\phi_{\mu_0}']$, then
there are an open neighborhood $\mathcal{O} \subset \mathcal{P}$ of
$\mu_0$ and a family,
 $$
\mu\in \mathcal{O}\mapsto \psi_{\mu} \in H_{per,e}^2([0,L_0]),
$$ of $L_0$-periodic solutions of $(\ref{ode})$,
which  depends smoothly on $\mu\in \mathcal{O}$. In addition, $\psi_{\mu_0}=\phi_{\mu_0}$ and $\psi_\mu\to\phi_{\mu_0}$, as $\mu\to\mu_0$, in $H^2_{per}([0,L_0])$ and uniformly in $[0,L_0]$.
\end{theorem}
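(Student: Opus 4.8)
The plan is to realize the family $\psi_\mu$ as the zero set of a smooth nonlinear map and to produce it via the Implicit Function Theorem, the whole point being to choose the underlying spaces so that the hypothesis $\ker(\mathcal{L}_{\mu_0})=[\phi_{\mu_0}']$ forces the relevant linearization to be invertible. Denote by $L^2_{per,e}([0,L_0])$ and $H^2_{per,e}([0,L_0])$ the closed subspaces of even functions in $L^2_{per}([0,L_0])$ and $H^2_{per}([0,L_0])$, and define
\[
\Phi:\mathcal{P}\times H^2_{per,e}([0,L_0])\longrightarrow L^2_{per,e}([0,L_0]),\qquad \Phi(\mu,\psi)=-\psi''+g(\mu,\psi).
\]
Since $\phi_{\mu_0}$ is even and its range is a compact set staying away from the points where $g(\mu,\cdot)$ fails to be smooth, $g$ is smooth on a $C^0$-neighborhood of that range; as $H^2_{per}([0,L_0])\hookrightarrow C^1([0,L_0])$, the Nemytskii operator $\psi\mapsto g(\mu,\psi)$ is well defined and smooth from a neighborhood of $\phi_{\mu_0}$ into $L^2_{per}$, and it sends even functions to even functions (from $\psi(-x)=\psi(x)$ one gets $g(\mu,\psi(-x))=g(\mu,\psi(x))$). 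Hence $\Phi$ is a smooth map between Banach spaces, $\Phi(\mu_0,\phi_{\mu_0})=0$, and its partial Fréchet derivative with respect to $\psi$ at $(\mu_0,\phi_{\mu_0})$ is precisely the Hill operator $\mathcal{L}_{\mu_0}$ of \eqref{hill}, now viewed as acting between the even spaces.

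The crux is to show that $\mathcal{L}_{\mu_0}$, acting between the even spaces, is a topological isomorphism. Because $\phi_{\mu_0}$ is even, the potential $g'(\mu_0,\phi_{\mu_0})$ is even, so the even subspace reduces $\mathcal{L}_{\mu_0}$; the restriction $\mathcal{L}_{\mu_0}|_{H^2_{per,e}}$ is then self-adjoint on $L^2_{per,e}([0,L_0])$ and inherits the compact resolvent of the full Hill operator, so its spectrum is discrete and it is an isomorphism onto $L^2_{per,e}([0,L_0])$ exactly when $0$ is not one of its eigenvalues. Now the hypothesis $\ker(\mathcal{L}_{\mu_0})=[\phi_{\mu_0}']$ holds in $H^2_{per}([0,L_0])$, while $\phi_{\mu_0}'$ is odd; hence the kernel of the restriction is $[\phi_{\mu_0}']\cap H^2_{per,e}([0,L_0])=\{0\}$. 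This is the parity mechanism that removes the only obstruction: the single kernel direction lives in the odd sector and is invisible to the even problem. Consequently $0\notin\sigma(\mathcal{L}_{\mu_0}|_{H^2_{per,e}})$ and the restricted operator is an isomorphism.

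With this in hand, the Implicit Function Theorem yields an open neighborhood $\mathcal{O}\subset\mathcal{P}$ of $\mu_0$ and a smooth map $\mu\in\mathcal{O}\mapsto\psi_\mu\in H^2_{per,e}([0,L_0])$ with $\Phi(\mu,\psi_\mu)=0$ and $\psi_{\mu_0}=\phi_{\mu_0}$. By construction each $\psi_\mu$ is even, solves \eqref{ode}, and has period exactly $L_0$, since periodicity is encoded in the function space and is therefore frozen automatically as $\mu$ varies. Finally, continuity of $\mu\mapsto\psi_\mu$ gives $\psi_\mu\to\phi_{\mu_0}$ in $H^2_{per}([0,L_0])$ as $\mu\to\mu_0$, and the embedding $H^2_{per}([0,L_0])\hookrightarrow C([0,L_0])$ promotes this to uniform convergence on $[0,L_0]$.

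I would flag the second step—the invertibility of $\mathcal{L}_{\mu_0}$ on the even subspace—as the heart of the matter and the only place where the kernel hypothesis is genuinely used. All the analytic content reduces to the parity bookkeeping that the odd kernel direction $\phi_{\mu_0}'$ lies outside the even space; this is exactly what prevents the period from drifting and what makes the fixed-period continuation work, so it is both the key idea and the place where care is required.
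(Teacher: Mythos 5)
Your proposal is correct and takes essentially the same route as the paper: both define the map $(\mu,\psi)\mapsto-\psi''+g(\mu,\psi)$ on the even periodic spaces, use the parity observation that the kernel direction $\phi_{\mu_0}'$ is odd and hence invisible to the even sector to conclude that $\mathcal{L}_{\mu_0}:H^2_{per,e}([0,L_0])\to L^2_{per,e}([0,L_0])$ is an isomorphism, and then apply the Implicit Function Theorem in Banach spaces together with the Sobolev embedding. The only difference is one of detail: you spell out the invertibility step (reduction by parity, self-adjointness, compact resolvent, discreteness of the spectrum), which the paper asserts in a single sentence.
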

\begin{proof}
Here we let $H_{per,e}^2([0,L_0])$ denote the subspace of
$H_{per}^2([0,L_0])$ constituted by the even periodic functions. Let
$\mathcal{F}:\mathcal{P}\times H_{per,e}^2([0,L_0])  \rightarrow
L_{per,e}^2([0,L_0])$ be the operator defined as
\be \mathcal{F}(\mu,\psi) =
-\psi''+ g(\mu,\psi). \label{eq31}
\ee
Since $\phi_{\mu_0}$ is an even periodic solution of \eqref{ode}, it is clear that $\mathcal{F}(\mu_0,\phi_{\mu_0})=0$. Also, $\mathcal{F}$ is Fr\'echet-differentiable with respect to $\psi$ and, in particular, the derivative $\frac{\partial\mathcal{F}}{\partial\psi}(\mu_0,\phi_{\mu_0})$ is exactly the operator $\mathcal{L}_{\mu_0}$.
 By noting that  $\phi'_{\mu_0}$ is an odd function, the assumption $\ker(\mathcal{L}_{\mu_0})=[\phi_{\mu_0}']$  imply that $\mathcal{L}_{\mu_0}: H_{per,e}^2([0,L_0]) \rightarrow L_{per,e}^2([0,L_0])$  is  invertible and its inverse is bounded.
Therefore, the conclusion follows from the Implicit Function Theorem in Banach spaces
(see e.g., Theorem 15.1 and Corollary 15.1 in \cite{Deimling}) and the Sobolev embedding.
\end{proof}

\subsection{Spectral Properties}

As we already discussed in the introduction, the existence of the smooth family of $L_0$-periodic solutions in Theorem \ref{existcurve} is a first step to study the orbital stability of the traveling wave $\phi_{\mu_0}$.
As a second step, the non-positive spectrum
of $\mathcal{L}_{\psi_{\mu}}$ plays a fundamental role and, in connection, it deliveries
the major difficulty in the theory.  In our case, we study the
non-positive spectrum of $\mathcal{L}_{\psi_{\mu}}$ by studying the inertial index
$in(\mathcal{L}_{\psi_{\mu}})$, which we now introduce. To simplify notation and avoid too many technicalities, we
restrict ourselves to the study of the operators treated in our
paper. However, it is possible to obtain similar results for a large
class of self-adjoint operators and we strongly recommend the reader
to \cite{neves} and \cite{neves1} for additional informations.

\begin{definition}\label{defi12}
Let $Q$ be an $L$-periodic function. Let $\mathcal{L}$ be the Hill's operator defined in $L_{per}^2([0,L])$ with domain $D(\mathcal{L})=H_{per}^2([0,L])$ by
$$
\mathcal{L}=-\partial_x^2+Q(x).
$$
 The inertial index of $\mathcal{L}$, denoted by  $in(\mathcal{L})$, is the pair $(n, z)$, where $n$ denotes the dimension of the negative
subspace of $\mathcal{L}$ and $z$ denotes the dimension of  $\ker(\mathcal{L})$.
\end{definition}

\begin{definition}\label{defi123}
Fix $\mu_0\in\mathcal{P}$ and let $\mu\in\mathcal{O}\mapsto\psi_{\mu}$ be the smooth family of
$L_0$-periodic solutions obtained in Theorem \ref{existcurve}. The
family of linear operators
 $\mathcal{L}_{\psi_{\mu}}:=-\partial_x^2+g'(\mu,\psi_{\mu})$, $\mu\in\mathcal{O}$, is said to be \textit{isoinertial} if
 $in(\mathcal{L}_{\psi_{\mu}})=in(\mathcal{L}_{\mu_0})$, for all $\mu\in \mathcal{O}$.
\end{definition}

 The next results in this section are based on \cite{neves} and \cite{neves1}. The first
  one concerns the invariance of the inertial index with respect to the parameter $\mu\in\mathcal{O}$.

\begin{theorem}\label{teo0}
Fix $\mu_0\in\mathcal{P}$ and let $\mu\in\mathcal{O}\mapsto\psi_{\mu}$ be the smooth family of
$L_0$-periodic solutions obtained in Theorem \ref{existcurve}. Then
the family of operators  $\mathcal{L}_{\psi_{\mu}}:=-\partial_x^2+g'(\mu,\psi_{\mu})$, $\mu\in\mathcal{O}$, is
isoinertial.
\end{theorem}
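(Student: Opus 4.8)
The plan is to show that the integer-valued map $\mu\mapsto in(\mathcal{L}_{\psi_\mu})=(n(\mu),z(\mu))$ is locally constant on $\mathcal{O}$; since the neighborhood $\mathcal{O}$ supplied by Theorem \ref{existcurve} may be taken to be a ball, hence connected, local constancy upgrades at once to global constancy and therefore to the asserted isoinertiality. First I would record that, by Theorem \ref{existcurve}, the map $\mu\mapsto\psi_\mu$ is smooth from $\mathcal{O}$ into $H^2_{per}([0,L_0])\hookrightarrow L^\infty([0,L_0])$, so that the potential $q_\mu:=g'(\mu,\psi_\mu)$ depends continuously (indeed smoothly) on $\mu$ in $L^\infty([0,L_0])$. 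Consequently $\{\mathcal{L}_{\psi_\mu}\}_{\mu\in\mathcal{O}}$ is a continuous family of self-adjoint operators on $L^2_{per}([0,L_0])$ with common domain $H^2_{per}([0,L_0])$ and compact resolvent; each member is a Hill operator with purely discrete spectrum bounded below. This is precisely the setting in which Kato's analytic perturbation theory applies: the eigenvalues vary continuously with $\mu$, and the Riesz spectral projection associated with any portion of the spectrum separated from the rest by a gap depends continuously on $\mu$, so its finite rank is locally constant.

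Next, the decisive structural input is that zero is a persistent eigenvalue: differentiating \eqref{ode} with respect to $x$ gives $\mathcal{L}_{\psi_\mu}\psi'_\mu=0$, so $\psi'_\mu\in\ker(\mathcal{L}_{\psi_\mu})$, whence $z(\mu)\geq 1$ and $0\in\sigma(\mathcal{L}_{\psi_\mu})$ for every $\mu$. Fix $\mu_0$ and write $in(\mathcal{L}_{\psi_{\mu_0}})=(n_0,z_0)$. By discreteness I would choose $\delta>0$ with $\pm\delta\notin\sigma(\mathcal{L}_{\psi_{\mu_0}})$ and $(-\delta,\delta)\cap\sigma(\mathcal{L}_{\psi_{\mu_0}})=\{0\}$. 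By the continuity just described, for $\mu$ near $\mu_0$ the points $\pm\delta$ remain in the resolvent set, exactly $n_0$ eigenvalues (counted with multiplicity) stay in $(-\infty,-\delta)$, and exactly $z_0$ eigenvalues stay in $(-\delta,\delta)$. The $n_0$ eigenvalues in $(-\infty,-\delta)$ are strictly negative and contribute exactly $n_0$ to $n(\mu)$, so the entire possible change of the index is confined to the $z_0$ eigenvalues clustered near the origin, at least one of which is pinned at $0$ by $\psi'_\mu$.

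The main obstacle is to rule out that one of these near-zero eigenvalues crosses the origin, that is, to show the whole cluster stays exactly at $0$. When $z_0=1$ — the case relevant to the waves studied here — this is immediate: the single near-zero eigenvalue must be the one carrying $\psi'_\mu$, so it equals $0$ identically, giving $n(\mu)=n_0$ and $z(\mu)=1$ and thus local constancy. For the general statement I would invoke the Floquet zero-counting characterization recalled just before \eqref{kerphi}: an eigenfunction associated with $\lambda_{2m-1}$ or $\lambda_{2m}$ has exactly $2m$ zeros in $[0,L_0)$. Since $\psi_\mu$ is an even, single-crested $L_0$-periodic wave, $\psi'_\mu$ is odd with a fixed number of zeros in $[0,L_0)$, an integer that depends continuously — hence constantly — on $\mu$. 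This fixes the position of the zero eigenvalue within the ordered spectrum $\lambda_0<\lambda_1\leq\cdots$, so the number of eigenvalues lying below it cannot change; combined with the Riesz-projection count this forces both $n(\mu)$ and $z(\mu)$ to be constant near $\mu_0$. Connectedness of $\mathcal{O}$ then completes the proof.
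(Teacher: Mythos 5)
The heart of your argument is sound, and it is worth noting that it is genuinely different in form from the paper's proof, which is a one-line citation: since $\psi_{\mu}'$ is a persistent eigenfunction of $\mathcal{L}_{\psi_{\mu}}$ with eigenvalue zero, the paper simply says the result ``follows from Theorem 3.1 in \cite{neves}''. What you wrote is, in effect, a self-contained proof of the special case of that cited result which the paper actually needs. Indeed, in Theorem \ref{teo0} the family $\psi_{\mu}$ comes from Theorem \ref{existcurve}, whose hypothesis is precisely $\ker(\mathcal{L}_{\mu_0})=[\phi_{\mu_0}']$, i.e. $z_0=1$; your Kato-type argument (continuity of the potentials in $L^{\infty}$, hence continuity of the eigenvalues and local constancy of the rank of the Riesz projection over $(-\delta,\delta)$, combined with the eigenvalue $0$ pinned by $\psi_{\mu}'$) then yields $n(\mu)=n_0$ and $z(\mu)=1$ for all $\mu$ in a neighborhood of $\mu_0$. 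That is the substance of the theorem, since the set $\mathcal{O}$ produced by the Implicit Function Theorem may always be shrunk.

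There is, however, a genuine gap in your ``general case'', and it propagates into your upgrade from local to global constancy on all of $\mathcal{O}$. The Floquet zero-counting result you invoke says that eigenfunctions of $\lambda_{2m-1}$ \emph{and} of $\lambda_{2m}$ both have exactly $2m$ zeros in $[0,L_0)$. Hence the (constant) number of zeros of $\psi_{\mu}'$ only pins the eigenvalue $0$ to the pair $\{\lambda_{2m-1}(\mu),\lambda_{2m}(\mu)\}$; it does not decide which member of the pair it is. Consequently it cannot exclude the dangerous scenario: at some $\mu_1$ with $\lambda_{2m-1}(\mu_1)=\lambda_{2m}(\mu_1)=0$ (double kernel), the pair splits for nearby $\mu$ as $\lambda_{2m-1}(\mu)<0=\lambda_{2m}(\mu)$, which keeps $0$ as an eigenvalue whose eigenfunction has $2m$ zeros, yet changes the inertial index from $(2m-1,2)$ to $(2m,1)$. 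So the sentence ``this fixes the position of the zero eigenvalue within the ordered spectrum, so the number of eigenvalues lying below it cannot change'' is not justified. For the same reason the open--closed argument on a connected $\mathcal{O}$ is incomplete: the set $\{\mu\in\mathcal{O}:\; in(\mathcal{L}_{\psi_{\mu}})=(n_0,1)\}$ is open by your $z=1$ argument, but its closedness in $\mathcal{O}$ would require precisely the excluded-splitting statement. The clean repair is to assert the conclusion on a possibly smaller neighborhood of $\mu_0$ (which is all the paper uses, and all your $z_0=1$ argument delivers), or else to rule out double kernels along the whole family by an argument independent of zero counting, e.g. the $\theta$-criterion of Theorem \ref{teo1}, rather than by the oscillation count of $\psi_{\mu}'$.
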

\begin{proof} Since, for every $\mu\in\mathcal{P}$,  $\phi_{\mu}'$ is an eigenfunction of $\mathcal{L}_{\psi_{\mu}}$ associated with the eigenvalue $\lambda=0$,  the result follows from Theorem 3.1 in \cite{neves}.
\end{proof}

In view of Theorem \ref{teo0}, in order to obtain
$in(\mathcal{L}_{\psi_{\mu}})$, $\mu\in \mathcal{O}$,
it suffices to determine $in(\mathcal{L}_{\mu_0})$.
So, in what follows in this section, we fix $\mu_0\in\mathcal{P}$ and let $\phi_{\mu_0}$
be an even periodic solution of $(\ref{ode})$ with period $L_0=L_{\mu_0}$. As we already noted, $\phi_{\mu_0}'$ is an eigenfunction
associated with the eigenvalue $\lambda = 0$. In addition, from our construction, it has exactly two
zeros in the half-open interval $[0, L_{\mu_0})$. Thus, we
have three possibilities for the inertial index of $\mathcal{L}_{\mu_0}$:
\begin{itemize}
\item[(i)] $\lambda_1 = \lambda_2 = 0 \Rightarrow in(\mathcal{L}_{\mu_0}) = (1,2)$;
\item[(ii)] $\lambda_1 = 0 < \lambda_2 \Rightarrow in(\mathcal{L}_{\mu_0}) = (1,1)$;
\item[(iii)] $\lambda_1 < \lambda_2 = 0 \Rightarrow  in(\mathcal{L}_{\mu_0}) = (2,1)$.
\end{itemize}
The method we use to deduce $in(\mathcal{L}_{\mu_0})$  is
based on Lemma 2.1 and Theorems 2.2 and 3.1 in \cite{neves}.  Here, we assume the following.\\

\begin{itemize}
\item[{\bf (H4)}] The initial-value problem
\be \left\{
\begin{array}{l}
  -{y}'' + g'(\mu_0, \phi_{\mu_0}(x)) {y} = 0 ,\\
 {y}(0) = - \frac{1}{\phi_{\mu_0}''(0)}, \\
{y}'(0)=0,
 \end{array} \right.
\label{y}
\ee
has a unique solution, which we shall call $\bar{y}$.\\
\end{itemize}

Since $\phi_{\mu_0}'$ is an $L_0$-periodic solution of the equation in \eqref{y} and the Wronskian of $\bar{y}$ and $\phi_{\mu_0}'$ is 1, it follows from Floquet's theory (see e.g., \cite[page 4]{magnus}) that there is a constant $\theta$ (depending on $\mu_0$) such that
$$
\bar{y}(x+L_0)=\bar{y}(x)+\theta\phi_{\mu_0}'(x).
$$
By taking the derivative in this last expression and evaluating at $x=0$, we obtain
\be \label{theta}
\theta= \frac{
\bar{y}'(L_{\mu_0})}{\phi_{\mu_0}''(0)},
\ee
The simplicity of the eigenvalue zero can be characterized in the following way.

\begin{theorem} \label{teo1}
Let $\theta$ be the constant given by (\ref{theta}), then the
eigenvalue $\lambda=0$ is simple if and only if $ \theta \neq 0$.
Moreover,
\begin{itemize}
\item[(i)] $ \lambda_{1}=0$ if $\theta <0$, and
\item[(ii)] $ \lambda_{2}=0$ if $\theta > 0$.
\end{itemize}
\end{theorem}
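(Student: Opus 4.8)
The plan is to realize $\theta$ through the monodromy matrix of the Hill equation $-y''+Qy=\lambda y$, where $Q(x):=g'(\mu_0,\phi_{\mu_0}(x))$, and then to read off both the simplicity and the ordering of the zero eigenvalue from the discriminant. Let $c(x,\lambda),s(x,\lambda)$ be the normalized fundamental solutions with $c(0,\lambda)=1$, $c'(0,\lambda)=0$, $s(0,\lambda)=0$, $s'(0,\lambda)=1$, let $M(\lambda)$ be the monodromy matrix, and set $\Delta(\lambda)=c(L_0,\lambda)+s'(L_0,\lambda)=\operatorname{tr}M(\lambda)$. Differentiating $(\ref{ode})$ shows $\phi_{\mu_0}'$ solves $-y''+Qy=0$; since $\phi_{\mu_0}$ is even we have $\phi_{\mu_0}'(0)=0$ and $\phi_{\mu_0}''(0)\neq0$, so uniqueness gives $s(x,0)=\phi_{\mu_0}'(x)/\phi_{\mu_0}''(0)$ and $\bar y(x)=-c(x,0)/\phi_{\mu_0}''(0)$. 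As $s(\cdot,0)$ is $L_0$-periodic, $s(L_0,0)=0$ and $s'(L_0,0)=1$, and $\det M(0)=1$ forces $c(L_0,0)=1$; hence $\Delta(0)=2$ and
\[ M(0)=\begin{pmatrix}1&0\\ c'(L_0,0)&1\end{pmatrix},\qquad \theta=\frac{\bar y'(L_0)}{\phi_{\mu_0}''(0)}=-\frac{c'(L_0,0)}{\big(\phi_{\mu_0}''(0)\big)^2}. \]

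First I would settle the simplicity criterion. Periodic solutions of $-y''+Qy=0$ correspond to Floquet multiplier $1$, i.e. to eigenvectors of $M(0)$ for the eigenvalue $1$; from the triangular form above, this eigenspace is two-dimensional exactly when $c'(L_0,0)=0$ and one-dimensional otherwise. Since $\mathcal{L}_{\mu_0}$ is self-adjoint, algebraic and geometric multiplicities of $\lambda=0$ coincide, so $\lambda=0$ is simple if and only if $c'(L_0,0)\neq0$, that is, if and only if $\theta\neq0$.

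To decide between $0=\lambda_1$ and $0=\lambda_2$ I would track the sign of $\Delta'(0)$. First-order perturbation of the fundamental system (variation of parameters with $cs'-c's=1$) gives, with the dot denoting $\partial_\lambda$ at $\lambda=0$, the identities $\dot c(L_0)=c(L_0)\!\int_0^{L_0}\!sc\,dx-s(L_0)\!\int_0^{L_0}\!c^2dx$ and $\dot s'(L_0)=c'(L_0)\!\int_0^{L_0}\!s^2dx-s'(L_0)\!\int_0^{L_0}\!cs\,dx$. Adding them and inserting $c(L_0,0)=1$, $s(L_0,0)=0$, $s'(L_0,0)=1$ makes the mixed integrals cancel, so that
\[ \Delta'(0)=c'(L_0,0)\int_0^{L_0}s(x,0)^2\,dx. \]
Since $\int_0^{L_0}s(x,0)^2\,dx>0$, this yields $\operatorname{sign}\Delta'(0)=\operatorname{sign}c'(L_0,0)=-\operatorname{sign}\theta$.

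Finally I would invoke Floquet oscillation theory. The eigenfunction $\phi_{\mu_0}'$ has exactly two zeros in $[0,L_0)$, so $\lambda=0$ must coincide with $\lambda_1$ or $\lambda_2$, the two periodic eigenvalues bounding the first spectral gap. By the interlacing of periodic and antiperiodic eigenvalues and the behaviour of $\Delta$ near the level $2$ (see \cite{magnus} and Theorems~2.2 and 3.1 in \cite{neves}), $\Delta$ crosses $2$ \emph{increasingly} at $\lambda_1$ and \emph{decreasingly} at $\lambda_2$; hence $\Delta'(0)>0\Leftrightarrow 0=\lambda_1$ and $\Delta'(0)<0\Leftrightarrow 0=\lambda_2$. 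Combining with the sign identity above, $\theta<0$ forces $\Delta'(0)>0$ and therefore $\lambda_1=0$, while $\theta>0$ forces $\Delta'(0)<0$ and therefore $\lambda_2=0$, which are exactly conclusions (i) and (ii). The delicate point is this last step: rigorously fixing the monotonicity of $\Delta$ at the two band edges $\lambda_1,\lambda_2$, where the machinery of \cite{neves} relating the zero count of eigenfunctions to the sign of the off-diagonal monodromy entry does the real work; the rest is the elementary Floquet bookkeeping carried out above.
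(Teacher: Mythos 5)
Your argument is correct, but note what the paper actually does at this point: its entire ``proof'' of the theorem is the citation ``See Section 1.2 in \cite{magnus} and Theorem 3.1 in \cite{neves}''. What you have written is, in substance, a reconstruction of the argument underlying those references: one identifies $\theta$ with a negative multiple of the off-diagonal monodromy entry and then invokes the classical discriminant theory of Hill's equation. Your computational steps all check out: the identifications $s(\cdot,0)=\phi_{\mu_0}'/\phi_{\mu_0}''(0)$ and $\bar y=-c(\cdot,0)/\phi_{\mu_0}''(0)$, hence $\theta=-c'(L_0,0)/\phi_{\mu_0}''(0)^2$ (consistent with the paper's relation $\bar y(x+L_0)=\bar y(x)+\theta\phi_{\mu_0}'(x)$); the unimodular lower-triangular form of $M(0)$, which together with self-adjointness gives the simplicity criterion; and the variation-of-parameters identities yielding $\Delta'(0)=c'(L_0,0)\int_0^{L_0}s(x,0)^2\,dx$, which is exactly the Magnus--Winkler formula for $\Delta'(\lambda)$ specialized at a periodic eigenvalue with $s(L_0,0)=0$. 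For the final step the only classical inputs you need are that $0\in\{\lambda_1,\lambda_2\}$ (already granted by the oscillation count stated in the paper, since $\phi_{\mu_0}'$ has exactly two zeros per period) and the band--gap structure of the discriminant, namely $\Delta<2$ on the stability bands abutting $\lambda_1$ from the left and $\lambda_2$ from the right and $\Delta>2$ on the gap $(\lambda_1,\lambda_2)$; since your formula shows the crossing of the level $2$ at $\lambda=0$ is transversal precisely when $\theta\neq0$, the sign of $\Delta'(0)$ distinguishes the two band edges with no circularity. So the proposal is a complete and correct filling-in of a proof that the paper delegates entirely to the literature, and it would stand on its own with the standard facts from \cite{magnus} quoted explicitly.
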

\begin{proof}
See Section 1.2 in \cite{magnus} and  Theorem 3.1 in \cite{neves}.
\end{proof}

Combining Theorems $\ref{existcurve}$ and $\ref{teo1}$ one has the following.

\begin{corollary}\label{existcurve1}
Fix $\mu_0\in\mathcal{P}$ and let $\phi_{\mu_0}$ be an
even periodic solution of $(\ref{ode})$ with  period
$L_{\mu_0}:=L_0$. If $\theta \neq 0$
then $\ker(\mathcal{L}_{\mu_0})=[\phi_{\mu_0}']$
and Theorem $\ref{existcurve}$ holds.
\end{corollary}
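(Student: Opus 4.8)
The plan is to read off this corollary as a direct synthesis of the two facts already in hand: the generic inclusion $[\phi_{\mu_0}']\subseteqq\ker(\mathcal{L}_{\mu_0})$ recorded in \eqref{kerphi}, and the simplicity criterion of Theorem \ref{teo1}. The whole content is to upgrade the inclusion to an equality under the hypothesis $\theta\neq0$, after which Theorem \ref{existcurve} applies verbatim.

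First I would recall that differentiating \eqref{ode} with respect to $x$ places $\phi_{\mu_0}'$ in the kernel of $\mathcal{L}_{\mu_0}$, which is precisely the inclusion $[\phi_{\mu_0}']\subseteqq\ker(\mathcal{L}_{\mu_0})$ from \eqref{kerphi}. Because $\phi_{\mu_0}$ is a nonconstant (oscillating) even periodic solution produced by Theorem \ref{existence}, its derivative $\phi_{\mu_0}'$ is not identically zero, so $[\phi_{\mu_0}']$ is a genuine one-dimensional subspace of the kernel.

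Next I would invoke the hypothesis $\theta\neq0$. By Theorem \ref{teo1}, this is exactly the condition for the eigenvalue $\lambda=0$ of the Hill's operator $\mathcal{L}_{\mu_0}$ to be simple. For the periodic problem, simplicity of an eigenvalue means its eigenspace is one-dimensional, so $\dim\ker(\mathcal{L}_{\mu_0})=1$. Combining this with the inclusion $[\phi_{\mu_0}']\subseteqq\ker(\mathcal{L}_{\mu_0})$ and $\phi_{\mu_0}'\neq0$ forces $\ker(\mathcal{L}_{\mu_0})=[\phi_{\mu_0}']$.

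Finally, with the kernel condition $\ker(\mathcal{L}_{\mu_0})=[\phi_{\mu_0}']$ now established, the hypotheses of Theorem \ref{existcurve} are met and its conclusion follows at once, giving the smooth family $\mu\in\mathcal{O}\mapsto\psi_{\mu}$ of $L_0$-periodic even solutions near $\phi_{\mu_0}$. There is essentially no hard step here: the only point requiring care is the equivalence between simplicity of the eigenvalue $\lambda=0$ and one-dimensionality of its eigenspace, but this is the standard Floquet-theoretic statement already encoded in Theorem \ref{teo1}, so the corollary is genuinely a bookkeeping combination of the preceding results.
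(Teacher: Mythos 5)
Your proposal is correct and follows essentially the same route as the paper, which justifies the corollary simply by "combining Theorems \ref{existcurve} and \ref{teo1}": the hypothesis $\theta\neq0$ gives simplicity of the eigenvalue $\lambda=0$ via Theorem \ref{teo1}, the inclusion \eqref{kerphi} plus $\phi_{\mu_0}'\not\equiv0$ then forces $\ker(\mathcal{L}_{\mu_0})=[\phi_{\mu_0}']$, and Theorem \ref{existcurve} applies. Your write-up merely makes explicit the dimension-counting step that the paper leaves implicit.
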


The definition of isoinertial operators as above concerns to periodic solutions having the same fixed period $L_0$. Since our intention is to
prove orbital stability results of periodic waves with an arbitrary period, we need to introduce the concept of family of linear operators which are \textit{isoinertial with respect to the period}.

\begin{definition}\label{defi1234}
Let $\phi_{\mu}$, $\mu\in \mathcal{P}$, be the  $L_{\mu}$-periodic solution obtained in Theorem
\ref{existence}. The family of linear operators
 $\mathcal{L}_{\mu}$ in $(\ref{hill})$, is said to be isoinertial with respect to the period $L_{\mu}$, if
 $in(\mathcal{L}_{\mu})=in(\mathcal{L}_{\mu_0})$, for a fixed $\mu_0\in \mathcal{P}$.
\end{definition}

In what follows, we prove the family $\mathcal{L}_{\mu}$, $\mu\in\mathcal{P}$, is
isoinertial with respect to the period.

\begin{theorem} \label{teoisoL}
If $g$ satisfies assumptions {\rm {\bf (H1)}}-{\rm {\bf (H3)}} and $
g'(\mu, \phi_{\mu}(x))$ is of class $C^1$, then the
family of linear operators $\mathcal{L}_{\mu}$, $ \mu \in
\mathcal{P}$, given in \eqref{hill} is isoinertial with respect to the period.
\end{theorem}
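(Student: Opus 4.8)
The plan is to reduce the variable-period situation to the fixed-period result already established in Theorem \ref{teo0} by means of a rescaling that maps every period $L_{\mu}$ onto the single period $L_0:=L_{\mu_0}$. Fix $\mu_0\in\mathcal{P}$ and, for each $\mu\in\mathcal{P}$, set $c_{\mu}:=L_{\mu}/L_0>0$, which is a positive $C^1$ function of $\mu$ by Theorem \ref{existence}. Define the rescaled profile $\Phi_{\mu}(s):=\phi_{\mu}(c_{\mu}s)$; since $\phi_{\mu}$ is $L_{\mu}$-periodic, $\Phi_{\mu}$ is $L_0$-periodic. The point of the change of variables $x=c_{\mu}s$ is that it carries the eigenvalue problem for $\mathcal{L}_{\mu}$ on $L_{per}^2([0,L_{\mu}])$ into an eigenvalue problem on the fixed space $L_{per}^2([0,L_0])$.

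Concretely, if $y$ solves $-y''+g'(\mu,\phi_{\mu})y=\lambda y$ on $[0,L_{\mu}]$, then $Y(s):=y(c_{\mu}s)$ is $L_0$-periodic and satisfies
\[
-\partial_s^2 Y + c_{\mu}^2\, g'(\mu,\Phi_{\mu})\, Y = c_{\mu}^2\,\lambda\, Y .
\]
Thus $Y$ is an eigenfunction, with eigenvalue $c_{\mu}^2\lambda$, of the Hill operator
\[
\widetilde{\mathcal{L}}_{\mu} := -\partial_s^2 + c_{\mu}^2\, g'(\mu,\Phi_{\mu})
\]
on $L_{per}^2([0,L_0])$ with domain $H_{per}^2([0,L_0])$. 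The correspondence $y\mapsto Y$ is a linear bijection between the two periodic spaces, carrying $\mathcal{L}_{\mu}$-eigenfunctions to $\widetilde{\mathcal{L}}_{\mu}$-eigenfunctions. Since $c_{\mu}^2>0$, one has $\lambda<0$ if and only if $c_{\mu}^2\lambda<0$, and $\lambda=0$ if and only if $c_{\mu}^2\lambda=0$; hence the rescaling preserves both the dimension of the negative subspace and the dimension of the kernel. Therefore $in(\mathcal{L}_{\mu})=in(\widetilde{\mathcal{L}}_{\mu})$ for every $\mu$, and at $\mu=\mu_0$ one has $c_{\mu_0}=1$ and $\widetilde{\mathcal{L}}_{\mu_0}=\mathcal{L}_{\mu_0}$.

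It then remains to show that $in(\widetilde{\mathcal{L}}_{\mu})$ is independent of $\mu$. Now all the operators $\widetilde{\mathcal{L}}_{\mu}$ act on the same Hilbert space $L_{per}^2([0,L_0])$, so I would invoke exactly the mechanism behind Theorem \ref{teo0}, namely Theorem 3.1 in \cite{neves}. For this I must check that $\mu\mapsto\widetilde{\mathcal{L}}_{\mu}$ is a continuous (indeed $C^1$) family of self-adjoint operators possessing a common nontrivial kernel direction. The kernel element is provided by $\dot{\Phi}_{\mu}$, the rescaled version of $\phi_{\mu}'$, which lies in $\ker(\widetilde{\mathcal{L}}_{\mu})$ for every $\mu$ precisely because $\phi_{\mu}'\in\ker(\mathcal{L}_{\mu})$. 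The $\mu$-independence of the inertial index furnished by \cite{neves} on the fixed space then yields $in(\widetilde{\mathcal{L}}_{\mu})=in(\widetilde{\mathcal{L}}_{\mu_0})$, and combining this with the previous paragraph gives $in(\mathcal{L}_{\mu})=in(\mathcal{L}_{\mu_0})$, which is exactly isoinertiality with respect to the period in the sense of Definition \ref{defi1234}.

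The main obstacle I anticipate is verifying the regularity of the rescaled potential $q_{\mu}(s):=c_{\mu}^2\,g'(\mu,\phi_{\mu}(c_{\mu}s))$ as a function of $\mu$ into an appropriate function space (say $C([0,L_0])$ or $L_{per}^2([0,L_0])$), which is what is needed to meet the continuity hypotheses of Theorem 3.1 in \cite{neves}. The difficulty is that $\mu$ enters $q_{\mu}$ in three coupled ways: through the scalar $c_{\mu}^2$, through the explicit argument of $g'$, and through the $\mu$-dependence of the profile $\phi_{\mu}$ together with the rescaling of its argument by $c_{\mu}$. Establishing the $C^1$ (or at least continuous) dependence requires combining the chain rule with the $C^1$ dependence of both $L_{\mu}$ and $\phi_{\mu}$ on $\mu$ from Theorem \ref{existence} and with the hypothesis that $g'(\mu,\phi_{\mu})$ is of class $C^1$; particular care is needed because the composition $s\mapsto\phi_{\mu}(c_{\mu}s)$ differentiates the profile at a $\mu$-dependent point. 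Once this continuity is secured, the isoinertial conclusion follows mechanically from the fixed-period theory.
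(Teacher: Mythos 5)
Your proposal is correct and follows essentially the same route as the paper: the paper also rescales by $\tau=L_{\mu}/L_0$ to obtain the fixed-period operator $\mathcal{M}_{\mu}(y)=-y''+\tau^2 g'(\mu,\phi_{\mu}(\tau x))y$ (your $\widetilde{\mathcal{L}}_{\mu}$), shows via the dilatation $\eta_\tau$ that $\mathcal{L}_{\mu}$ and $\mathcal{M}_{\mu}$ have the same inertial index, and then applies the isoinertial-family theorem of Neves on the fixed space $L^2_{per}([0,L_0])$, using exactly the $C^1$ regularity of the rescaled potential that you flag as the point to verify.
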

\begin{proof}
Recall that $\mu_0\in\mathcal{P}$ is fixed and $\phi_{\mu_0}$ is an
even periodic solution of $(\ref{ode})$ with  period
$L_{\mu_0}:=L_0$. For any $\mu\in\mathcal{P}$, let $
\mathcal{M}_{\mu}: H_{per}^2([0,L_0]) \rightarrow
L_{per}^2([0,L_0])
$
be the
operator defined as
\[
\mathcal{M}_{\mu} (y) := -y'' +  \tau^2
g'(\mu,\phi_{\mu}(\tau x))\; y,
\]
where
\be  \label{tau}
 \tau = \frac{L_{\mu}}{L_0}.
\ee

Let $\eta_{\tau}$ be the dilatation that maps $L_0$-periodic
functions into $L_{\mu}$-periodic functions, that is,
\begin{eqnarray*}
\eta_{\tau}: L_{per}^2([0,L_0]) &\rightarrow& L_{per}^2([0,L_{\mu}]) \\
h(x) &\longmapsto& h\left(\frac{x}{\tau}\right).
\end{eqnarray*}
Then, it is easy to see that
\begin{eqnarray*}
\eta_{\tau}^{-1} \mathcal{L}_{\mu} \eta_{\tau} \;(y(x))
&=&  \eta_{\tau}^{-1} \mathcal{L}_{\mu}  \left(y\left(\frac{x}{\tau}\right)\right) \\
&=& \eta_{\tau}^{-1} \left( - \frac{1}{\tau^2} y'' \left(\frac{x}{\tau}\right)
+ g'(\mu, \phi_{\mu}(x)) \; y\left(\frac{x}{\tau}\right)\right) \\
&=& \frac{1}{\tau^2} \left( - y''(x) +  \tau^2 g'(\mu, \phi_{\mu}(\tau x)) y(x) \right) \\
&=& \frac{1}{\tau^2} \mathcal{M}_{\mu} (y(x)).
\end{eqnarray*}
Therefore, if $\lambda$ belongs to the resolvent set,
$\rho(\mathcal{L}_{\mu})$, of $\mathcal{L}_{\mu}$, then
\begin{eqnarray*}
\left(\mathcal{M}_{\mu} - \tau^2 \lambda I \right)^{-1} &=&
\left[ \tau^2 \left( \frac{1}{\tau^2}\mathcal{M}_{\mu} -
\lambda I \right) \right]^{-1}
= \frac{1}{\tau^2} \left( \eta_{\tau}^{-1} \mathcal{L}_{\mu} \eta_{\tau} - \lambda I \right)^{-1} \\
&=& \frac{1}{\tau^2} \; \eta_{\tau}^{-1}
\left(\mathcal{L}_{\mu} - \lambda I \right)^{-1} \eta_{\tau},
\end{eqnarray*}
that is, the resolvent sets of $ \mathcal{L}_{\mu}$ and $
\mathcal{M}_{\mu}$ satisfy the relation
\[
 \rho( \mathcal{M}_{\mu}) =  \tau^2 \rho( \mathcal{L}_{\mu}) ,
\]
where $\tau$ is given in (\ref{tau}). In particular, the operators $
\mathcal{L}_{\mu}$ and $\mathcal{M}_{\mu}$ have the
same inertial index. Now, we observe that the potential of the
operator $ \mathcal{M}_{\mu} $ is continuously differentiable
in all the variables,  and periodic with period $L_0$ for every
$\mu \in \mathcal{P}$. Therefore, Theorem 3.1 in
\cite{neves1} (see also Theorem $\ref{teo0}$) implies that
$\mathcal{M}_{\mu} $ is an isoinertial family of operators and
\[
 in( \mathcal{L}_{\mu} ) = in(\mathcal{M}_{\mu} ) = in(\mathcal{M}_{\mu_0} )=in(\mathcal{L}_{\mu_0}).
 \]
The proof of the theorem is now completed.
\end{proof}

\begin{remark}\label{obs123}
Fix $\mu_0,\mu_1\in \mathcal{P}$. Let $\phi_{\mu_0}$ and $\phi_{\mu_1}$ be the
$L_0$- and $L_1$-periodic solutions of $(\ref{ode})$ given in Theorem
$\ref{existence}$. If we may apply Theorem \ref{existcurve}, we can construct two smooth families of periodic solutions, say, $\mu\in \mathcal{O}\mapsto \psi_{\mu}\in
H_{per}^2([0,L_0])$ and $\mu\in \mathcal{O}_1\mapsto \widetilde{\psi}_{\mu}\in
H_{per}^2([0,L_0])$ such that $\psi_{\mu_0}=\phi_{\mu_0}$ and $\widetilde{\psi}_{\mu_1}=\phi_{\mu_1}$. In view of Theorems \ref{teo0} and \ref{teoisoL} we then have
$$
in(\mathcal{L}_{\psi_{\mu}})=in(\mathcal{L}_{\mu_0})=in(\mathcal{L}_{\mu_1})=in(\mathcal{L}_{\widetilde{\psi}_{\mu}}).
$$
This means that the inertial index does not depend on the curve constructed in Theorem \ref{existcurve}. In particular if $\alpha$ does not depend on $\mu$ as in Corollary \ref{coro2}, the inertial index is the same for all family $\mu\in\widetilde{\mathcal{O}}\mapsto \psi_{\mu}\in H_{per}^2([0,L])$  and any $L>\alpha$.
\end{remark}

\section{Stability of Periodic Waves for the log-kdv equation}

In this section, we use the theory put forward in last section in order to establish the existence and orbital/linear stability of periodic traveling waves for \eqref{logKDV}.

To begin with, we observe that \eqref{travlog} is of the form $(\ref{ode})$ with
\begin{equation}\label{functiong}
g(\omega,A,\phi)= \omega \phi - \phi\log\phi^2 +A.
\end{equation}

It is clear that $g$ is smooth with respect to  $(\omega,A)\in\R^2$ and locally lipschitzian in $\phi$. As we already said, we divide our analysis into two cases.\\

\subsection{First Case: $A=0$.}

\noindent \\
\indent As we have pointed out at the introduction, \eqref{travlog} admits the solitary-wave solution given in \eqref{soliton}. Thus, the dynamics associated with \eqref{travlog} is a little bit richer. Here, the function $g$ in \eqref{functiong} reduces to $g(\omega,0,
\phi):=g_\omega(\phi)=\omega \phi - \log(\phi^2) \phi$. It is easily seen that $g_\omega$ possesses three zeros, namely, $0$ and $\pm e^{\omega/2}$. In order to see that $g_\omega$ satisfies assumption {\bf (H1)} and get (positive) solutions, we take  $r_1=0$ and $r_2= e^{\omega/2}$. Since $r_1$ and $r_2$ are, respectively, local minimum and maximum of the function (see Figure \ref{figure1})
$$
G(\omega,0,\phi):=G_\omega(\phi)=\frac{\omega +1}{2} \phi^2 - \frac{1}{2} \phi^2\log\phi^2,
$$
it follows that $(r_1,0)$ and $(r_2,0)$ are, respectively, saddle and center equilibrium points (see e.g., \cite[page 179]{jack}). This shows that assumption ${\bf (H1)}$ is fulfilled, for all $\omega\in\R$.

The energy function here is given by
\[
\mathcal{E}(\phi, \xi) = - \frac{\xi^2}{2} + G_\omega(\phi) = - \frac{\xi^2}{2}
 + \frac{\omega +1}{2} \phi^2 - \frac{1}{2} \phi^2 \log\phi^2.
\]
Since the solitary-wave solution \eqref{soliton} satisfies
$$
-\frac{(\phi')^2}{2}+\frac{\omega+1}{2}\phi^2-\frac{1}{2} \phi^2\log\phi^2=0,
$$
we deduce that $\mathcal{E}(\phi, \phi')=\mathcal{E}(0,0)=0$. This means, we can take the closed
curve $\Gamma$, in assumption {\bf (H2)}, to be the orbit of the soliton
(\ref{soliton}) together with the equilibrium point
$(r_1,0)=(0,0)$ (see Figure \ref{figure1}). Since the origin belongs to $\Gamma$, it is clear that $g_\omega$ is smooth
in the region inside $\Gamma$ and $g'_\omega(r_2)=-2<0$, for all $\omega\in\R$. As a conclusion, $g_\omega$ satisfies assumptions {\bf (H1)}-{\bf (H3)} with $\mathcal{P}=\R$.
\begin{figure}[h!]
\includegraphics[scale=0.25]{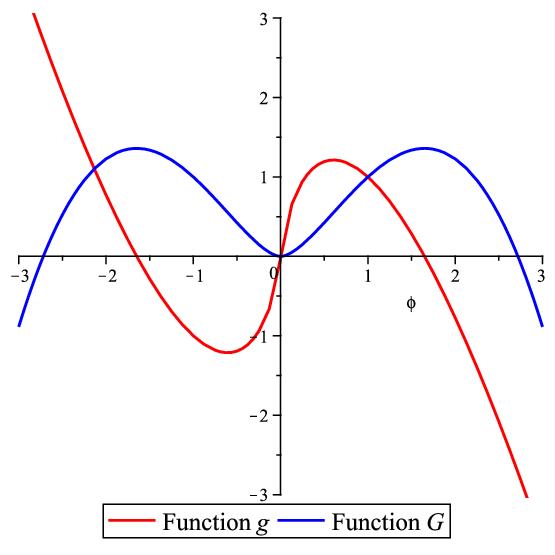}\includegraphics[scale=0.25]{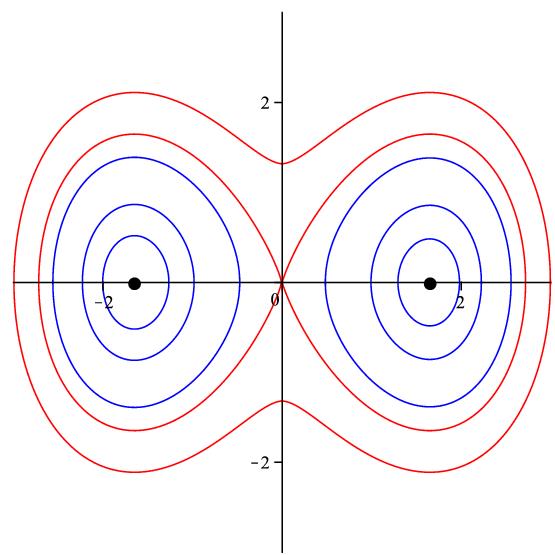}
\caption{Left: Graphs of the functions $g=g_\omega$ and $G=G_\omega$ for $\omega=1$. Right: Phase space of the equation $-\phi''+\omega\phi-\phi\log(\phi^2)=0$. The orbits in blue are those for which $\phi$ is periodic and does not change sign.}
\label{figure1}
\end{figure}

 Theorem \ref{existence} and Corollary \ref{coro2} can now be
applied to prove the existence of positive $L$-periodic solutions, where the
period $L$  ranges over the interval  $ (\alpha, +\infty)$,
with $\alpha = 2 \pi/ \sqrt{-g'_\omega(r_2)} =  \pi \sqrt{2}$. More precisely, we have.

\begin{proposition}\label{proexazero}
Let $L\in(\sqrt{2}\pi,+\infty)$ be fixed. Then, for any $\omega\in\R$, equation
\begin{equation}\label{travazero}
-\phi''+\omega\phi-\phi\log\phi^2=0
\end{equation}
possesses an $L$-periodic solution, which is even and strictly positive.
\end{proposition}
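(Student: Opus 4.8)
The plan is to apply Theorem \ref{existence} together with Corollary \ref{coro2} directly, since the discussion preceding the statement has already verified that $g_\omega(\phi)=\omega\phi-\phi\log(\phi^2)$ satisfies hypotheses {\bf (H1)}--{\bf (H3)} with $\mathcal{P}=\R$, $r_1=0$, and $r_2=e^{\omega/2}$. First I would record the computation that drives everything: since $g_\omega'(\phi)=\omega-2\log|\phi|-2$, one gets $g_\omega'(r_2)=g_\omega'(e^{\omega/2})=-2$ for every $\omega\in\R$, so the period of the linearization at the center $(r_2,0)$ is $\alpha=2\pi/\sqrt{-g_\omega'(r_2)}=\pi\sqrt{2}$. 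The decisive structural fact is that this $\alpha$ is \emph{independent} of $\omega$, which is exactly what will allow the same lower bound $\sqrt{2}\pi$ to serve for all wave speeds simultaneously.

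With this in hand, Theorem \ref{existence} produces, for each fixed $\omega$, an even periodic solution; tracking the energy level $B$ through its proof, the period varies continuously with $L\to\pi\sqrt{2}$ as $B\to\mathcal{E}(r_2,0)$ and $L\to+\infty$ as $B\to\mathcal{E}(r_1,0)$. Because $\alpha=\pi\sqrt{2}$ does not depend on $\omega$, Corollary \ref{coro2} applies and guarantees surjectivity onto $(\pi\sqrt{2},+\infty)$. Hence, given any fixed $L>\pi\sqrt{2}$ and any $\omega\in\R$, I can select the energy level so that the resulting solution has period exactly $L$; this yields the existence together with the evenness claimed in the proposition, the latter being built into the construction in Theorem \ref{existence}.

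It remains to establish strict positivity, which I regard as the only step needing genuine care beyond citation. By construction the periodic orbit lies strictly inside the closed curve $\Gamma$, which by assumption {\bf (H2)} is the orbit of the Gaussian solitary wave \eqref{soliton} completed by the saddle equilibrium $(r_1,0)=(0,0)$; such an orbit encircles the center $(r_2,0)=(e^{\omega/2},0)$ with $e^{\omega/2}>0$. Since $\Gamma$ meets the axis $\{\phi=0\}$ only at the single point $(0,0)$, an orbit strictly interior to $\Gamma$ cannot reach $\phi=0$, so its $\phi$-coordinate remains in an interval $[b_1,b_2]$ with $0<b_1<r_2<b_2$. Consequently the corresponding solution satisfies $\phi\ge b_1>0$, which is the asserted strict positivity. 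The main conceptual obstacle is thus not the existence (which reduces to the abstract machinery) but confirming that the periodic orbit, although it can be taken arbitrarily close to the homoclinic loop $\Gamma$ touching the origin, never actually degenerates to include $\phi=0$.
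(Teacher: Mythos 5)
Your proposal is correct and follows essentially the same route as the paper: verify {\bf (H1)}--{\bf (H3)} for $g_\omega$ with $r_1=0$, $r_2=e^{\omega/2}$, note $g_\omega'(r_2)=-2$ so that $\alpha=\pi\sqrt{2}$ is independent of $\omega$, and invoke Theorem \ref{existence} together with Corollary \ref{coro2}, with positivity read off from the phase portrait since the periodic orbits lie inside the homoclinic loop $\Gamma$, which meets $\{\phi=0\}$ only at the origin. Your explicit argument that the interior of $\Gamma$ is contained in $\{\phi>0\}$ is a slightly more careful rendering of the positivity claim the paper disposes of by simply noting the admissible initial conditions $e^{\omega/2}<\phi(0)<e^{(\omega+1)/2}$.
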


 The
initial conditions $(\phi(0), \phi'(0)) = (\phi(0),0)$ that give rise
to positive periodic  solutions are in the range  $r_2= e^{\omega/2}
< \phi(0) <  e^{(\omega +1)/2}  $, where the constant $  e^{(\omega
+1)/2} $ is the intersection of $\Gamma$ with the axis $\phi$ (in the phase space).

Note that since the $L$-periodic solutions are strictly positive the function $x\in\R\mapsto g_\omega'(\phi(x))=\omega-2-\log\phi^2(x)$ is of class $C^1$. Hence, assumption {\bf (H4)} also holds and we can apply all the results in Section \ref{secex}.
In particular, to construct a smooth family of $L$-periodic solution, one can use Theorem \ref{existcurve}. However, with the equation we have in hand,  we observe if $\phi_0$ is an $L$-periodic solution of
\begin{equation}\label{travomzero}
-\phi_0''+\phi_0\log\phi_0^2=0,
\end{equation}
then $\psi_\omega=e^{\omega/2}\phi_0$ is an $L$-periodic solution of \eqref{travazero} for any $\omega\in\R$. As a consequence, we have proved the following.

\begin{proposition}\label{proexazero1}
Let $L\in(\sqrt{2}\pi,+\infty)$ be fixed. Let $\phi_0$ be the positive $L$-periodic solution obtained in Proposition \ref{proexazero} with $\omega=0$. Then,
$$
\omega\in\R\mapsto \psi_{\omega}=e^{\omega/2}\phi_0\in H^2_{per}([0,L])
$$
is a smooth family of positive $L$-periodic  solutions for \eqref{travazero}.
\end{proposition}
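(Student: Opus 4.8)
The plan is to prove the statement by direct verification, exploiting the scaling structure of the logarithmic nonlinearity rather than invoking the implicit-function construction of Theorem \ref{existcurve}. This has the advantage of producing the family globally for every $\omega\in\R$ through an explicit formula, so that smoothness is automatic and no local analysis near a fixed parameter is required.

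First I would record the input: by Proposition \ref{proexazero} with $\omega=0$ there exists a strictly positive, even, $L$-periodic solution $\phi_0$ of \eqref{travomzero}. Since $\phi_0$ is bounded away from zero, the map $s\mapsto s\log s^2$ is smooth along the orbit of $\phi_0$, and bootstrapping in the ODE gives $\phi_0\in C^\infty$; in particular $\phi_0\in H^2_{per}([0,L])$.

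The key step is the substitution. Writing $\psi_\omega:=e^{\omega/2}\phi_0$, one has $\psi_\omega''=e^{\omega/2}\phi_0''$ and, crucially, $\log\psi_\omega^2=\log(e^{\omega}\phi_0^2)=\omega+\log\phi_0^2$. Inserting these into the left-hand side of \eqref{travazero} and factoring out $e^{\omega/2}$, the two contributions proportional to $\omega\phi_0$ cancel, so what remains is $e^{\omega/2}$ times the left-hand side of \eqref{travazero} evaluated at $\omega=0$, that is, the equation \eqref{travomzero} satisfied by $\phi_0$. Hence the expression vanishes and $\psi_\omega$ solves \eqref{travazero} for every $\omega\in\R$. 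Positivity is clear because $e^{\omega/2}>0$ and $\phi_0>0$, and $L$-periodicity is inherited since $\psi_\omega$ is a constant multiple of the $L$-periodic function $\phi_0$.

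It remains to establish smoothness of the map $\omega\mapsto\psi_\omega$ into $H^2_{per}([0,L])$. Here I would simply note that $\psi_\omega$ is the fixed vector $\phi_0\in H^2_{per}([0,L])$ multiplied by the scalar $e^{\omega/2}$; since $\omega\mapsto e^{\omega/2}$ is $C^\infty$ from $\R$ to $\R$, the product $\omega\mapsto e^{\omega/2}\phi_0$ is $C^\infty$ from $\R$ into $H^2_{per}([0,L])$, with derivatives $\partial_\omega^k\psi_\omega=2^{-k}e^{\omega/2}\phi_0$. There is essentially no obstacle in this argument; the only points deserving care are the regularity of $\phi_0$ and the identity $\log(e^{\omega}\phi_0^2)=\omega+\log\phi_0^2$, which is precisely what makes the constant rescaling compatible with the logarithmic term and permits the global family to be written down explicitly.
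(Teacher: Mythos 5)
Your proposal is correct and is essentially identical to the paper's own argument: the paper proves this proposition precisely by observing that if $\phi_0$ solves the $\omega=0$ equation, then the scaling $\psi_\omega=e^{\omega/2}\phi_0$, via the identity $\log(e^{\omega}\phi_0^2)=\omega+\log\phi_0^2$, solves \eqref{travazero} for every $\omega\in\R$, smoothness in $\omega$ being automatic since $\psi_\omega$ is a scalar multiple of a fixed element of $H^2_{per}([0,L])$. The only point to flag is that \eqref{travomzero} as printed carries a sign typo (it should read $-\phi_0''-\phi_0\log\phi_0^2=0$, consistent with \eqref{travazero} at $\omega=0$ and with \eqref{PVI3}), which your computation implicitly corrects.
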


It is easily seen that if $\phi$ is a solution of \eqref{travazero}, so is $-\phi$. Thus, in view of Proposition \ref{proexazero1} we may also obtain a smooth curve of negative $L$-periodic solutions. More precisely.

\begin{proposition}\label{proexazero2}
Let $L\in(\sqrt{2}\pi,+\infty)$ be fixed. Let $\phi_0$ be the positive $L$-periodic solution obtained in Proposition \ref{proexazero} with $\omega=0$. Then,
$$
\omega\in\R\mapsto \psi_{\omega}=-e^{\omega/2}\phi_0\in H^2_{per}([0,L])
$$
is a smooth family of negative $L$-periodic  solutions for \eqref{travazero}.
\end{proposition}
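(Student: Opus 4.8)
The plan is to obtain Proposition \ref{proexazero2} as an immediate consequence of Proposition \ref{proexazero1} together with the odd symmetry of equation \eqref{travazero}. First I would make precise the remark preceding the statement, namely that \eqref{travazero} is invariant under the reflection $\phi\mapsto-\phi$. Writing, for fixed $\omega\in\R$, the nonlinear operator $N(\phi):=-\phi''+\omega\phi-\phi\log\phi^2$, one verifies termwise that $N(-\phi)=-N(\phi)$: the second-derivative and linear terms are clearly odd in $\phi$, while the nonlinearity satisfies $(-\phi)\log((-\phi)^2)=-\phi\log\phi^2$ because $(-\phi)^2=\phi^2$. Hence $\phi$ is a solution of \eqref{travazero} if and only if $-\phi$ is.

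Applying this observation to the family produced in Proposition \ref{proexazero1}, I would conclude that for every $\omega\in\R$ the function $\psi_\omega=-e^{\omega/2}\phi_0$ solves \eqref{travazero}, since $e^{\omega/2}\phi_0$ does. Its $L$-periodicity is inherited from that of $\phi_0$, and its strict negativity is immediate from $\phi_0>0$ (Proposition \ref{proexazero}) together with $e^{\omega/2}>0$, which gives $-e^{\omega/2}\phi_0<0$ pointwise on $[0,L]$.

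It then remains to check that $\omega\mapsto\psi_\omega$ is smooth as a map into $H^2_{per}([0,L])$. This follows by composing the smooth map $\omega\mapsto e^{\omega/2}\phi_0$ of Proposition \ref{proexazero1} with the bounded linear operator $v\mapsto-v$ on $H^2_{per}([0,L])$, which is of class $C^\infty$; a composition of smooth maps is smooth. I do not anticipate any genuine obstacle here, since the whole argument rests on a reflection symmetry. The only point requiring a word of justification is the smoothness of the resulting branch, and this reduces to the elementary fact that scalar multiplication is a bounded linear—hence infinitely differentiable—operation on $H^2_{per}([0,L])$.
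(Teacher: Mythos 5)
Your proposal is correct and follows exactly the paper's own (implicit) argument: the paper justifies Proposition \ref{proexazero2} precisely by observing that \eqref{travazero} is invariant under $\phi\mapsto-\phi$ and applying this to the positive family of Proposition \ref{proexazero1}. Your additional verification of smoothness via composition with the bounded linear map $v\mapsto -v$ is a harmless elaboration of the same idea.
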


Attention is now turned to the orbital stability of the periodic traveling waves in Propositions \ref{proexazero1} and \ref{proexazero2}. So, in what follows in this section, we fix  $L\in(\sqrt{2}\pi,+\infty)$ and let $\psi_\omega$, $\omega\in\R$, be either a positive solution as in Proposition \ref{proexazero1} or a negative solution as in Proposition \ref{proexazero2}. Recall that the
quantities $E$ and $F$, defined in \eqref{conser12} and
\eqref{conser22}, are conserved by the flow of
\eqref{logKDV} and are invariant under the action of the group of
translations $T(s)f(\cdot)=f(\cdot+s)$, $s\in\mathbb{R}$. Note also that
functional $E$ is not smooth  at the origin on $H_{per}^1([0,L])$.
Nevertheless, the arguments above imply that $\psi_\omega^2$  is strictly positive, for any $\omega\in\R$, guaranteeing thus the smoothness
of the functional $E$ around any periodic traveling wave $\psi_\omega$. This is enough to apply
the abstract theory in \cite{grillakis1}, because
the orbital stability is determined for initial data sufficiently close to
 $\psi_\omega$.

\indent The space we shall be working with is the Hilbert space $X:=H_{per}^1([0,L])$.
 Before going into details, let us recall that periodic traveling-wave solutions are of the form $u(x,t)= \phi(x-\omega t)$, $\omega\in\R$, where $\phi$ is a solution of \eqref{travlog}. Now, we present the definition of orbital stability

\begin{definition}\label{defstab}
We say that an $L$-periodic solution $\phi$ is orbitally stable in $X$, by the periodic flow of \eqref{logKDV},  if for any $\ve>0$ there exists $\delta>0$ such that for any $u_0\in X$ satisfying $\|u_0-\phi\|_X<\delta$, the solution of \eqref{logKDV} with initial data $u_0$ exists globally and satisfies
$$
\inf_{y\in\R}\|u(\cdot,t)-\phi(\cdot+y)\|_X<\ve,
$$
for all $t\in \R$.
\end{definition}

Roughly speaking, we say that $\phi$ is orbitally stable if for any initial data close enough to $\phi$, the corresponding solution remains close enough to the orbit of $\phi$ generated by translations,
\begin{equation}\label{deforbit}
O_\phi:=\{\phi(\cdot+y);\,y\in\R\}.
\end{equation}

Define the functional $H=E+\omega F$.  Thus, in a neighborhood of  $\psi_{\omega}$,  $H$ is  smooth. This allows us to  calculate the Fr\'echet
derivative of $H$ at $\psi_{\omega}$ to deduce, from $(\ref{travlog})$, that
$\psi_{\omega}$ is a critical point of $H$, that is,
$$H'(\psi_{\omega})=(E+\omega F)'(\psi_{\omega})=- \psi_{\omega}'' + \omega \psi_{\omega} - \psi_{\omega}\log\psi_{\omega}^2
=0.$$

Also, in a neighborhood of $\psi_{\omega}$, we can rewrite equation $(\ref{logKDV})$ as an abstract Hamiltonian system, namely,
\begin{equation}\label{hamilt}
u_t=JE'(u),
\end{equation}
with $J=\partial_x$. Although  $J$ is not onto on $L^2_{per}([0,L])$, we can still apply the theory in \cite{grillakis1} because, as is well known by now, such an assumption must be imposed only for proving an instability result. As we will see below, our results show the stability of the traveling waves $\psi_\omega$.

Next, consider the linearized operator $\mathcal{L}_{\psi_\omega}:=H''(\psi_\omega)$, that is,
\be \label{linsystem}
\begin{array}{l}
\mathcal{L}_{\psi_\omega}(v)=H''(\psi_{\omega})v = - v'' + (\omega - 2  - \log\psi_{\omega}^2 )v.
\end{array}
\ee
One has that $\mathcal{L}_{\psi_\omega}$ is an unbounded operator defined on
$L_{per}^2([0,L])$ with domain $H_{per}^2([0,L])$.

Finally, we recall if (S1), (S2), and (S3) below hold, then the stability
theory presented in \cite{grillakis1} states that $\psi_\omega$ is orbitally stable.
\begin{itemize}
\item[(S1)]{There is an open interval $I\subset\mathbb{R}$ and a smooth branch of periodic solutions,
$\omega\in I\subset\mathbb{R}\mapsto\psi_{\omega}\in H_{per}^1([0,L])$.}
\item[(S2)]{ The operator $\mathcal{L}_{\psi_\omega}$, defined in \eqref{linsystem}, has only one negative eigenvalue which is simple and zero is a simple eigenvalue whose
eigenfunction is $T'(0)\psi_{\omega}=\psi_{\omega}'$, $\omega\in I$.
Moreover, the rest of the spectrum of $\mathcal{L}_{\psi_\omega}$ is positive
and bounded away from zero.}
\item[(S3)]{ If $d:I\rightarrow \mathbb{R}$ is the function defined as $d(\omega)=H(\psi_{\omega})$, then
$$
d''(\omega)=\frac{d}{d\omega}F(\psi_{\omega})>0,\ \ \  \ \ \mbox{for
all}\ \omega\in I.
$$}
\end{itemize}
\indent Finally, we are in position to prove our stability result.

\begin{theorem}\label{teostab}
Suppose that uniqueness and continuous dependence hold according
to Theorem \ref{teoWP}. Fix $L\in(\sqrt{2}\pi,+\infty)$ and let $\omega\in \R\mapsto \psi_{\omega} \in
H_{per}^2([0,L])$, be the smooth family of $L$-periodic solution given in either Proposition \ref{proexazero1} or Proposition \ref{proexazero2}. Then  $\psi_{\omega}$ is
orbitally stable in $X$ by the periodic flow of
$(\ref{logKDV})$.\end{theorem}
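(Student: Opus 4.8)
The plan is to check hypotheses (S1)--(S3) listed just before the statement and then quote the abstract theorem of \cite{grillakis1}, the global solvability, uniqueness and continuous dependence needed to run that machinery being exactly what is assumed from Theorem \ref{teoWP}. Hypothesis (S1) is already in hand: Propositions \ref{proexazero1} and \ref{proexazero2} produce the branch $\omega\in\R\mapsto\psi_\omega=\pm e^{\omega/2}\phi_0\in H^2_{per}([0,L])$, which is manifestly smooth (indeed real-analytic) in $\omega$, so I would merely record this with $I=\R$.

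The core is the spectral count (S2). The first observation is that $\mathcal{L}_{\psi_\omega}$ does not depend on $\omega$: inserting $\psi_\omega=e^{\omega/2}\phi_0$ into \eqref{linsystem} and using $\log\psi_\omega^2=\omega+\log\phi_0^2$ gives $\mathcal{L}_{\psi_\omega}=-\partial_x^2-2-\log\phi_0^2$, so it suffices to analyse the single operator $\mathcal{L}:=-\partial_x^2-2-\log\phi_0^2$. Differentiating the profile identity $H'(\psi_\omega)=0$ in $\omega$ (equivalently, applying $\mathcal{L}$ to $\partial_\omega\psi_\omega=\tfrac12\psi_\omega$) yields $\mathcal{L}\phi_0=-2\phi_0$. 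Since $\phi_0>0$ has no zeros, the Floquet characterization recalled after \eqref{hill} forces $\phi_0$ to be the first eigenfunction; hence $\lambda_0=-2$ is simple. On the other hand $\phi_0'\in\ker(\mathcal{L})$, and being the derivative of an even, positive, $L$-periodic profile it has exactly two zeros in $[0,L)$ (at $0$ and $L/2$); thus $0\in\{\lambda_1,\lambda_2\}$. Because $\phi_0>0$ the potential $-2-\log\phi_0^2$ is $C^1$, assumption (H4) holds, and Theorem \ref{teo1} applies: the eigenvalue $0$ is simple precisely when the constant $\theta$ of \eqref{theta} is nonzero, and $\theta<0$ places $\lambda_1=0$. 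Granting $\theta<0$, the inertial index is $in(\mathcal{L})=(1,1)$, i.e. a single simple negative eigenvalue, a simple kernel $\mathrm{span}\{\phi_0'\}$, and (by discreteness of the Hill spectrum) the remaining eigenvalues strictly positive and bounded away from $0$ --- exactly (S2).

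The main obstacle is therefore the sign of $\theta$, i.e. excluding a second negative eigenvalue (the case $\lambda_1<\lambda_2=0$). I would pin it down by relating $\theta$ to the period map. Let $\phi(\cdot;B)$ be the family of even profiles on the energy levels $\mathcal{E}(r_1,0)<B<\mathcal{E}(r_2,0)$, with $\phi'(0;B)=0$ and period $L(B)$; then $w:=\partial_B\phi$ solves $\mathcal{L}w=0$ with $w'(0)=0$, so $w$ is a multiple of the even solution $\bar y$ of \eqref{y}, while differentiating $\phi(x+L(B);B)=\phi(x;B)$ in $B$ shows $w(x+L)=w(x)-L'(B)\phi'(x)$. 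Comparing with the Floquet relation $\bar y(x+L_0)=\bar y(x)+\theta\phi_0'(x)$ gives $\theta=-L'(B)\,\bar y(0)/w(0)$. Now $\bar y(0)=-1/\phi_0''(0)>0$ because $\phi_0$ attains its maximum at $x=0$, while $w(0)=\partial_B\phi(0;B)<0$ because the amplitude decreases with $B$; hence $\theta$ has the same sign as $L'(B)$. Together with the limits $L\to+\infty$ as $B\to\mathcal{E}(r_1,0)$ and $L\to\alpha$ as $B\to\mathcal{E}(r_2,0)$ from Theorem \ref{existence}, the expected behaviour $L'(B)<0$ yields $\theta<0$. Establishing this monotonicity of the period (at least its local sign at the level corresponding to period $L$) is the one genuinely delicate analytic point.

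Finally (S3) is a short computation. From $\psi_\omega=e^{\omega/2}\phi_0$ one has
\[
F(\psi_\omega)=\frac12\int_0^L\psi_\omega^2\,dx=\frac{e^\omega}{2}\int_0^L\phi_0^2\,dx,
\]
and since $H'(\psi_\omega)=0$ gives $d'(\omega)=F(\psi_\omega)$, it follows that $d''(\omega)=\frac{d}{d\omega}F(\psi_\omega)=\frac{e^\omega}{2}\int_0^L\phi_0^2\,dx>0$ for every $\omega\in\R$. With (S1)--(S3) verified, the criterion of \cite{grillakis1} yields the orbital stability of $\psi_\omega$ in $X$.
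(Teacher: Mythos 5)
Your treatment of (S1) and (S3) is correct and essentially identical to the paper's. Parts of your attack on (S2) are in fact cleaner than the paper's: the observation that $\log\psi_\omega^2=\omega+\log\phi_0^2$ makes $\mathcal{L}_{\psi_\omega}$ independent of $\omega$ (so that only the single operator $\mathcal{L}=-\partial_x^2-2-\log\phi_0^2$ need be analysed), and the identity $\mathcal{L}\phi_0=-2\phi_0$ together with $\phi_0>0$, which exhibits $\lambda_0=-2$ as the simple ground-state eigenvalue, are both correct and avoid invoking isoinertiality in $\omega$. Your relation $\theta=-L'(B)\,\bar y(0)/w(0)$, with $\bar y(0)>0$ and $w(0)<0$, so that $\theta$ and $L'(B)$ have the same sign, is also a correct and insightful reformulation.

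Nevertheless there is a genuine gap: the decisive fact --- that $\theta<0$, equivalently that $\lambda_1=0$ rather than $\lambda_1<\lambda_2=0$, equivalently (in your reformulation) that $L'(B)<0$ at the energy level whose orbit has the prescribed period $L$ --- is never established. The endpoint limits $L\to\alpha$ as $B\to\mathcal{E}(r_2,0)$ and $L\to+\infty$ as $B\to\mathcal{E}(r_1,0)$ from Theorem \ref{existence} only force $L'(B)<0$ at \emph{some} level; if the period map were non-monotone, your argument would fail exactly at those levels where $L'(B)\geq0$, and you explicitly flag this as ``the one genuinely delicate analytic point'' without resolving it. This is precisely the step the paper supplies, albeit numerically: it solves the initial-value problems \eqref{PVI3} and \eqref{y}, computes $\theta\approx-1.70<0$ for the single profile with $\omega_0=0$, $\phi_0(0)=1.5$, $L_0\approx4.80$, and then invokes isoinertiality with respect to the period (Theorem \ref{teoisoL} and Remark \ref{obs123}) to propagate $in(\mathcal{L})=(1,1)$ to every $L\in(\sqrt{2}\pi,+\infty)$ and every $\omega\in\R$. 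Note the structural advantage you forfeit by discarding that machinery: the sign of $\theta$ needs to be determined at only \emph{one} period, not at each $L$; your monotonicity problem could likewise be reduced to a single level $B$, but even then some verification (analytic, or numerical as in the paper) is required, and your proposal contains none.
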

\begin{proof}
We let $\psi_\omega$ be any $L$-periodic wave given in Proposition \ref{proexazero1}. The case for  $\psi_\omega$ as in Proposition \ref{proexazero1} is similar.
As argued above, we need to show  that (S1), (S2) and (S3) hold.
In fact, since $\mathcal{L}_{\psi_\omega}$ is isoinertial with respect to
the period (see Remark \ref{obs123}), its inertial index can be
computed by fixing $\omega_0\in\R$ and  $L_0>\sqrt2\pi$, and calculating the inertial index of $\mathcal{L}_{\omega_0}$. For the sake of simplicity, we take  $\omega_0=0$. Thus, we see that $\phi_0(0)$ must belong to the interval
$(1,\; \sqrt{e})$. By choosing $\phi_0(0)=1.5$, $\phi_0$ satisfies the following
initial-value problem,
\begin{equation}\label{PVI3} \left\{\begin{array}{lllll}
-\phi_0''-\phi_0\log\phi_0^2=0,\\
\phi_0(0)=1.5,\\
\phi'_0(0)=0.
\end{array}\right.
\end{equation}
In order to obtain the other parameters we need for, we will use numerical arguments. In particular the  period $L_0$ of $\phi_0$ can be determined from $(\ref{perL})$ as $L_0 \approx 4.80$. By solving numerically the initial-value problems
$(\ref{PVI3})$ and \eqref{y}, we can compute the constant
$\theta$ in (\ref{theta}) as $\theta \approx -1.70$.\\

\noindent {\bf Step 1: (S1) holds.}
The existence of such a smooth branch follows  from Proposition \ref{proexazero}. Moreover, we have $I=\R$.\\

\noindent {\bf Step 2: (S2) holds.}
Since $\phi_0'$ has exactly two zeros in the interval $[0, L_0)$ and $\theta\approx -1.70<0$ we deduce from Theorem \ref{teoisoL} and Remark \ref{obs123} that
$in(\mathcal{L}_{\psi_\omega})=in(\mathcal{L}_{0})=(1,1)$ for all $\omega\in\mathbb{R}$ and all $L>\sqrt2\pi$. In addition, because $\mathcal{L}_{\psi_\omega}$ is an Hill's operator, the rest of the eigenvalues are strictly positive.\\

\noindent {\bf Step 3: (S3) holds.}
In order to conclude the orbital stability,  it remains to prove that
\begin{equation}\label{secondder1}
d''(\omega)=\displaystyle\frac{1}{2}\frac{d}{d\omega}\int_{0}^{L}\psi_{\omega}^2dx>0.
\end{equation}
But since $\psi_\omega=e^{\omega/2}\phi_0$, we immediately deduce
$$
d''(\omega)=\displaystyle\frac{1}{2}\left(\int_{0}^{L}\phi_{0}^2dx\right)\frac{d}{d\omega}e^{\omega}>0.
$$
This proves \eqref{secondder1} and concludes the proof of Theorem \ref{teostab}.
\end{proof}

\subsection{Second Case: $A\neq0$.}

\noindent \\

Next, we assume  $A\neq0$. Here the function $g$ reads as in \eqref{functiong} and
$$
G(\omega,A,\phi)= \frac{\omega +1}{2} \phi^2 - \frac{1}{2}\phi^2 \log\phi^2
+A\phi.
$$
 First of all let us take a look at the zeros of $g$. It is clear that $g(\omega,A,\phi)=0$ is equivalent to $g_\omega(\phi)=-A$, with $g_\omega(\phi)$ given in the beginning of the last subsection. A simple analysis reveals that $x_0=e^{\omega/2-1}$ and $-x_0$ are the only critical points of $g_\omega$. In addition, for all $\omega\in\R$,
$$
\lim_{x\to+\infty}g_\omega(x)=-\infty \quad \mbox{and} \quad \lim_{x\to-\infty}g_\omega(x)=+\infty.
$$
Since $g_\omega(x_0)=2e^{\omega/2-1}$ and $g_\omega(x)=-g_\omega(-x)$, we deduce three different scenarios for the zeros of $g$.

\vskip.3cm

\noindent {\bf Case 1: $|A|<2e^{\omega/2-1}$}. Here, there exist exactly three real numbers $r_0<r_1<r_2$ satisfying
$$
g_\omega(r_0)=g_\omega(r_1)=g_\omega(r_2)=-A,
$$
which means that $r_0,r_1$ and $r_2$ are three zeros of $g(\omega,A,\cdot)$, for any $\omega\in\R$ and $-2e^{\omega/2-1}<A<2e^{\omega/2-1}$. Note that  $r_1>0$ if $A<0$ and $r_1<0$ if $A>0$ (see Figure \ref{figure2}). Also, the fact that $x_0>0$ implies that $r_2>0$. In addition, because, for $\phi$ in a neighborhood of $r_2$, $g(\omega,A,\phi)>0$ if $\phi<r_2$ and $g(\omega,A,\phi)<0$ if $\phi>r_2$, it follows that $r_2$ is a local maximum of $G(\omega,A,\cdot)$. A similar analysis shows that $r_0$ is also a local maximum of $G(\omega,A,\cdot)$.
\begin{figure}[h!] \label{figure2}
\begin{center}
 \includegraphics[scale=0.25]{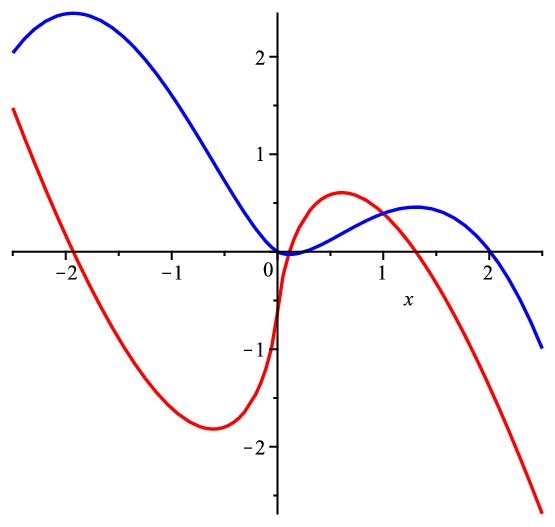}\includegraphics[scale=0.28]{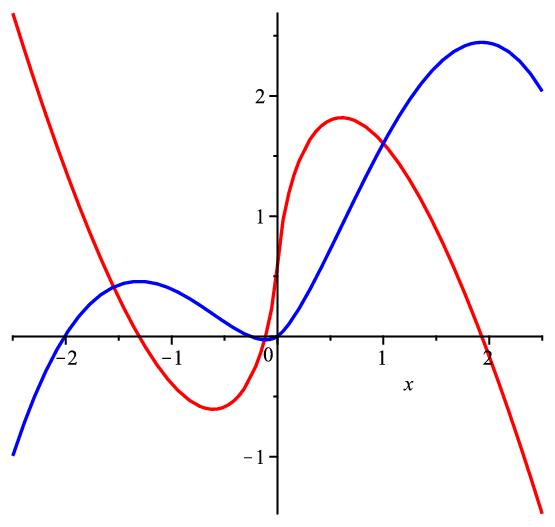}
  \caption{Left: Graphs of the functions $g$ (red) and $G$ (blue) for $\omega\in\R$ and $|A|<2e^{\omega/2-1}$, $A<0$. Right: Graphs of the functions $g$ (red) and $G$ (blue) for $\omega\in\R$ and $|A|<2e^{\omega/2-1}$, $A>0$.}
\end{center}
\end{figure}

\vskip.3cm

\noindent {\bf Case 2: $|A|=2e^{\omega/2-1}$}. In this case, there exist unique $r_1<0<r_2$ such that $g_\omega(r_1)=g_\omega(r_2)=-A$. Thus $g(\omega,A,\cdot)$ has exactly two zeros if $\omega\in\R$ and $|A|=2e^{\omega/2-1}$ (see Figure  \ref{figure3}).

\begin{figure}[h!]
 \includegraphics[scale=0.25]{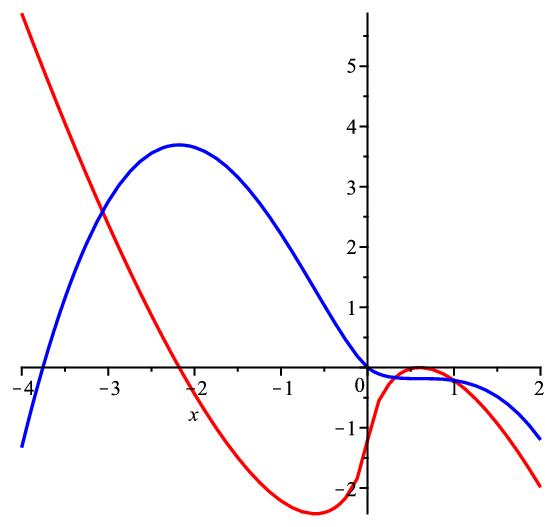}\quad \includegraphics[scale=0.25]{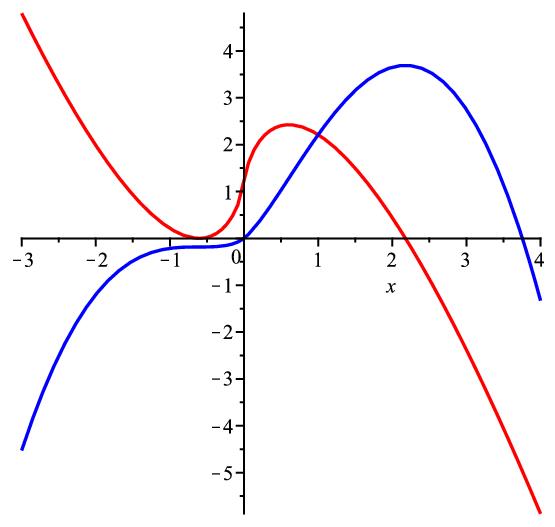}
 \caption{Left: Graphs of the functions $g$ (red) and $G$ (blue) for $\omega\in\R$ and $A=2e^{\omega/2-1}$. Right: Graphs of the functions $g$ (red) and $G$ (blue) for $\omega\in\R$ and $A=-2e^{\omega/2-1}$.}
\label{figure3}
\end{figure}

\vskip.3cm

\noindent {\bf Case 3: $|A|>2e^{\omega/2-1}$}. In this final case, there exists an unique real number $r_2$ satisfying $g_\omega(r_2)=-A$, that is, $g(\omega,A,\cdot)$ has an unique zero if $\omega\in\R$ and $|A|>2e^{\omega/2-1}$. Moreover, $r_2<0$ if $A<0$ and $r_2>0$ if $A>0$ (see Figure \ref{figure4}). Also in this case, because, for $\phi$ in a neighborhood of $r_2$, $g(\omega,A,\phi)>0$ if $\phi<r_2$ and $g(\omega,A,\phi)<0$ if $\phi>r_2$, we conclude that $r_2$ is a local maximum of $G(\omega,A,\cdot)$.
\begin{figure}[h!]
\includegraphics[scale=0.25]{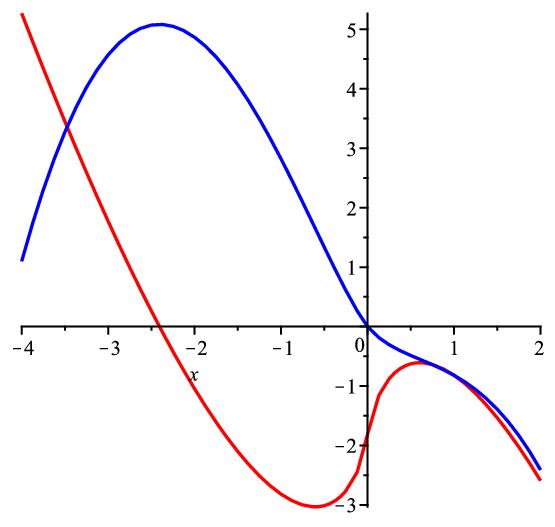}  \includegraphics[scale=0.25]{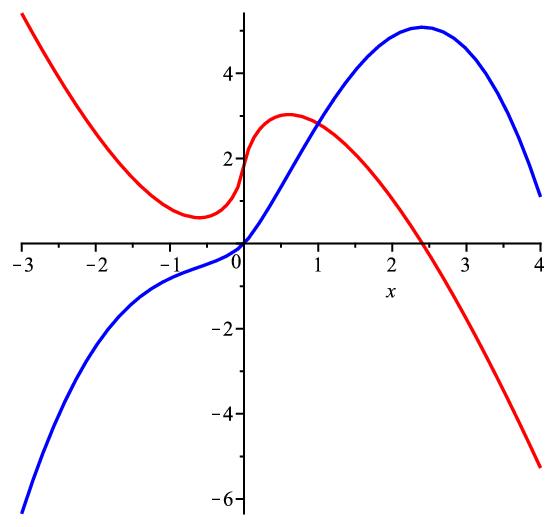}
  \caption{Left: Graphs of the functions $g$ (red) and $G$ (blue) for $\omega\in\R$ and $|A|>2e^{\omega/2-1}$, $A<0$. Right: Graphs of the functions $g$ (red) and $G$ (blue) for $\omega\in\R$ and $|A|>2e^{\omega/2-1}$, $A>0$.}
\label{figure4}
\end{figure}

\vskip.3cm

In view of the above discussion in Cases 1 and 3, if $(\omega,A)$ belongs to either
\begin{equation}\label{P1}
\mathcal{P}_1=\{(\omega,A)\in\R^2; \;\omega\in\R, \; |A|<2e^{\omega/2-1}\}
\end{equation}
or
\begin{equation}\label{P3}
\mathcal{P}_3=\{(\omega,A)\in\R^2; \;\omega\in\R, \; |A|>2e^{\omega/2-1}\},
\end{equation}
then the function $g(\omega,A,\cdot)$ always has a real zero $r_2$, for which $G(\omega,A,\cdot)$ assumes a local maximum. As a consequence of Remark \ref{rem1}, we obtain the following.

\begin{theorem}\label{teoaomegaA}
Assume that $(\omega,A)$ belongs to either $\mathcal{P}_1$ or $\mathcal{P}_3$. Then, equation \eqref{travlog} possesses an even periodic solution $\phi_{(\omega,A)}$. Moreover we have the following.
\begin{itemize}
\item[(i)] If $(\omega,A)\in\mathcal{P}_1$ with $A<0$ then all solutions that turn around $(r_2,0)$ are strictly positive and the solutions that turn around $(r_0,0)$ are strictly negative, provided they belong to a small open neighborhood of $(r_0,0)$.
\item[(ii)] If $(\omega,A)\in\mathcal{P}_1$ with $A>0$ then all solutions that turn around $(r_0,0)$ are strictly negative and the solutions that turn around $(r_2,0)$ are strictly positive, provided they belong to a small open neighborhood of $(r_2,0)$.
\item[(iii)] If $(\omega,A)\in\mathcal{P}_3$ with $A<0$ then  the solutions that turn around $(r_2,0)$ are strictly negative, provided they belong to a small open neighborhood of $(r_2,0)$.
\item[(iv)] If $(\omega,A)\in\mathcal{P}_3$ with $A>0$ then  the solutions that turn around $(r_2,0)$ are strictly positive, provided they belong to a small open neighborhood of $(r_2,0)$.
\end{itemize}

\end{theorem}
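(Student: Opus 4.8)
The plan is to read everything off the phase portrait of the conservative equation \eqref{travlog}, whose orbits lie on the level sets of $\mathcal{E}(\phi,\xi)=-\frac{\xi^2}{2}+G(\omega,A,\phi)$ with $\xi=\phi'$. The existence of the even periodic wave $\phi_{(\omega,A)}$ is immediate: the discussion preceding the statement shows that in both $\mathcal{P}_1$ and $\mathcal{P}_3$ the function $g(\omega,A,\cdot)$ has a zero $r_2$ at which $G(\omega,A,\cdot)$ attains a \emph{local maximum}, so by Remark \ref{rem1} all orbits in a neighborhood of the center $(r_2,0)$ are periodic and symmetric about the $\phi$-axis. (In $\mathcal{P}_1$ the same applies to $r_0$, which is also a local maximum of $G$, producing the family around $(r_0,0)$.) It remains to prove the sign statements, and for this I reduce everything to locating the endpoints of the $\phi$-excursion of each orbit.

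The key reduction is that an orbit at energy level $B$ satisfies $\xi^{2}=2\bigl(G(\omega,A,\phi)-B\bigr)$, so its $\phi$-range is precisely the connected component $[b_1(B),b_2(B)]$ of $\{\phi:G(\omega,A,\phi)\geq B\}$ containing $r_2$ (resp.\ $r_0$), with $G(b_1)=G(b_2)=B$ and $b_1<r_2<b_2$. Hence the sign of $\phi$ along an orbit is governed by the signs of $b_1$ and $b_2$, and these I control using the neighbouring critical points of $G$, whose locations and signs are exactly those already recorded for the zeros $r_0<r_1<r_2$.

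For (i), in $\mathcal{P}_1$ with $A<0$ we have $r_0<0<r_1<r_2$, where $r_0,r_2$ are local maxima (centers) and $r_1$ is the intermediate local minimum of $G$ (a saddle). The orbits encircling $(r_2,0)$ correspond to $B\in\bigl(G(\omega,A,r_1),G(\omega,A,r_2)\bigr)$; since $G$ is increasing on $(r_1,r_2)$, the left endpoint satisfies $b_1(B)\in(r_1,r_2)$, whence $b_1>r_1>0$ and the whole orbit lies in $\{\phi>0\}$ — so \emph{every} such orbit is strictly positive. By contrast, for the center $r_0<0$ the orbits encircling $(r_0,0)$ correspond to $B\in\bigl(G(\omega,A,r_1),G(\omega,A,r_0)\bigr)$ and their right endpoint tends to $r_1>0$ as $B\downarrow G(\omega,A,r_1)$; thus only orbits with $B$ near $G(\omega,A,r_0)$, i.e.\ those in a small neighbourhood of $(r_0,0)$, keep $\phi$-range inside $\{\phi<0\}$ and are strictly negative. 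Part (ii) then follows from (i) by the reflection symmetry $(\phi,A)\mapsto(-\phi,-A)$: since $g(\omega,-A,-\phi)=-g(\omega,A,\phi)$, if $\phi$ solves \eqref{travlog} then $-\phi$ solves it with $A$ replaced by $-A$, which interchanges the two centers and reverses all signs (equivalently, repeat the endpoint analysis with the now-negative saddle $r_1<0$ bounding the orbits).

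For (iii) and (iv), in $\mathcal{P}_3$ the function $g(\omega,A,\cdot)$ has the single zero $r_2$ (with $r_2<0$ if $A<0$ and $r_2>0$ if $A>0$), so $G(\omega,A,\cdot)$ has a unique critical point and is strictly monotone on each side of $r_2$; consequently $\{G\geq B\}$ is a single interval that expands to all of $\R$ as $B\to-\infty$. Large orbits therefore cross $\{\phi=0\}$, but because $r_2\neq0$ one may fix $\varepsilon>0$ with $(r_2-\varepsilon,r_2+\varepsilon)\subset\{\phi r_2>0\}$, and the orbits at energy $B$ near $G(\omega,A,r_2)$ have $\phi$-range inside this interval, hence retain the sign of $r_2$; this yields strict negativity in (iii) and strict positivity in (iv). The main obstacle is establishing the \emph{unqualified} assertions in (i) and (ii) — that every orbit around the outer center is sign-definite — which is exactly the point where one must verify that the bounding separatrix passes through a saddle of the correct sign and that the monotonicity of $G$ between the saddle and the center prevents the bounding endpoint from ever reaching the origin.
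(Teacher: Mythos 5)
Your proposal is correct and follows essentially the same route as the paper: existence comes from the case analysis of the zeros of $g(\omega,A,\cdot)$ together with Remark \ref{rem1}, and the sign statements are read off the phase portrait of the conservative equation. The paper justifies those sign claims only through the phase-space discussion and Figures \ref{figure5}--\ref{figure6}; your level-set endpoint analysis (bounding the orbits by the saddle $r_1$ and by the level $G(\omega,A,0)=0$), together with the reflection symmetry $(\phi,A)\mapsto(-\phi,-A)$ used to pass from (i) to (ii), simply makes that pictorial argument rigorous.
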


The phase spaces for $(\omega,A)$  in $\mathcal{P}_1$ or $\mathcal{P}_3$ are shown in Figures \ref{figure5} and \ref{figure6} below.

\begin{figure}[h!]
\includegraphics[scale=0.25]{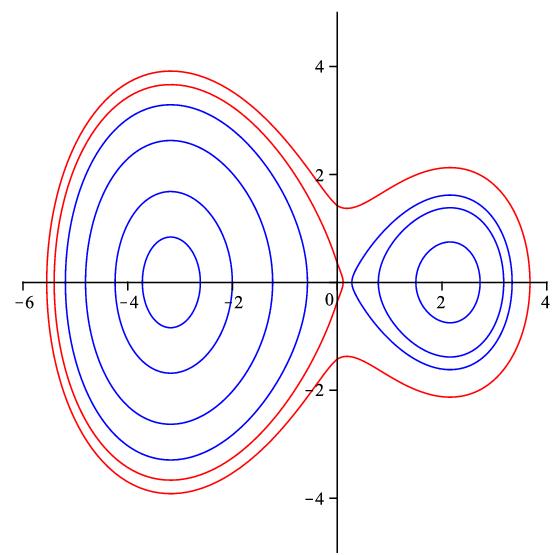}  \includegraphics[scale=0.25]{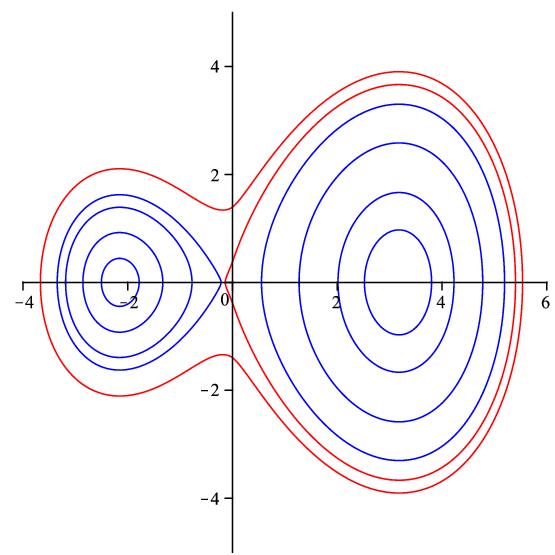}
   \caption{Left: Phase space for $|A|<2e^{\omega/2-1}$, $A<0$. Right: Phase space for $|A|<2e^{\omega/2-1}$, and $A>0$. In both cases, the orbits in blue are those for which $\phi_{(\omega,A)}$ is periodic and does not change sign.}
\label{figure5}
\end{figure}

\begin{figure}[h!]
 \includegraphics[scale=0.25]{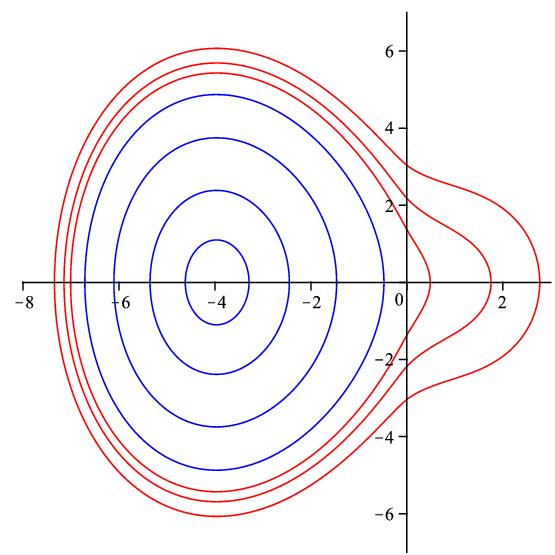}  \includegraphics[scale=0.25]{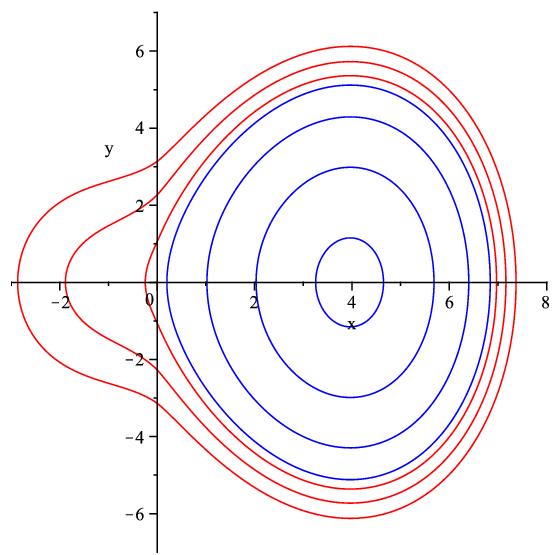}
  \caption{Left: Phase space for $|A|>2e^{\omega/2-1}$, $A<0$. Right: Phase space for $|A|>2e^{\omega/2-1}$, and $A>0$. The orbits in blue are those for which $\phi_{(\omega,A)}$ is positive.}
\label{figure6}
\end{figure}

\begin{remark}
It is clear if $|A|=2e^{\omega/2-1}$ in Case 2, we also obtain periodic solutions that do not change sign. However, in this situation $(\omega, A)$ does not belong to an open set of $\R^2$.
\end{remark}

\begin{remark}
Having disposed Theorem \ref{teoaomegaA}, a few words of explanation are in order. Here, contrary to the case where $A=0$, we have no control of how large the period of $\phi_{(\omega,A)}$ may be. There is the possibility that solutions, even those  turning around the critical center points, change its sign. Recall in the case $A=0$, we obtain $L$-periodic solutions that do not change sign for any $L>\sqrt2 \pi$.
\end{remark}

Now we can construct the smooth family of periodic solutions we need.

\begin{theorem}\label{teoexisanzero}
Fix $(\omega_0,A_0)$ according to Theorem \ref{teoaomegaA} and let $L_0$ be the period of $\phi_{(\omega_0,A_0)}$. Then
there are an open neighborhood $\mathcal{O}$ of
$(\omega_0,A_0)$ and a family,
 $$
(\omega,A)\in \mathcal{O}\mapsto \psi_{(\omega,A)} \in H_{per,e}^2([0,L_0]),
$$ of $L_0$-periodic solutions of $(\ref{ode})$,
which  depends smoothly on $(\omega,A)\in \mathcal{O}$.
\end{theorem}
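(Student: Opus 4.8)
The plan is to read this statement as an instance of Theorem \ref{existcurve} with the two-dimensional parameter $\mu=(\omega,A)$ ranging over the open set $\mathcal{P}=\mathcal{P}_1$ (or $\mathcal{P}_3$) from \eqref{P1}--\eqref{P3}. I would set $\mu_0=(\omega_0,A_0)$ and first invoke Theorem \ref{teoaomegaA} to produce the even, $L_0$-periodic solution $\phi_{(\omega_0,A_0)}$ of \eqref{ode}; by items (i)--(iv) of that theorem this wave does not change sign on $[0,L_0]$. This sign-definiteness is the decisive structural input: since $\phi_{(\omega_0,A_0)}$ stays away from the origin, the potential $g'(\omega_0,A_0,\phi_{(\omega_0,A_0)})=\omega_0-2-\log\phi_{(\omega_0,A_0)}^2$ is a smooth $L_0$-periodic function. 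Hence $\mathcal{L}_{(\omega_0,A_0)}$ is a genuine Hill operator, assumption {\bf (H4)} holds, and the spectral apparatus of Section \ref{secex} --- in particular Theorem \ref{teo1} and Corollary \ref{existcurve1} --- becomes applicable.

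Everything then reduces to checking the single hypothesis of Theorem \ref{existcurve}, namely $\ker(\mathcal{L}_{(\omega_0,A_0)})=[\phi_{(\omega_0,A_0)}']$. Since $\phi_{(\omega_0,A_0)}$ turns exactly once around its center per period, $\phi_{(\omega_0,A_0)}'$ has precisely two zeros in $[0,L_0)$, so by the Floquet/Sturm count recalled before \eqref{kerphi} the eigenvalue $0$ equals either $\lambda_1$ or $\lambda_2$. To conclude that it is simple I would invoke Corollary \ref{existcurve1}: it is enough to show that the Floquet constant $\theta$ of \eqref{theta}, attached to the second solution $\bar y$ of \eqref{y}, does not vanish. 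Granting $\theta\neq0$, we obtain $\ker(\mathcal{L}_{(\omega_0,A_0)})=[\phi_{(\omega_0,A_0)}']$ at once.

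The closing step is then routine. The kernel is spanned by the odd function $\phi_{(\omega_0,A_0)}'$, so restricted to the even subspace the operator $\mathcal{L}_{(\omega_0,A_0)}\colon H_{per,e}^2([0,L_0])\to L_{per,e}^2([0,L_0])$ has trivial kernel and is therefore boundedly invertible. Applying the Implicit Function Theorem to $\mathcal{F}(\mu,\psi)=-\psi''+g(\mu,\psi)$ exactly as in the proof of Theorem \ref{existcurve} (whose argument uses only an even periodic solution together with this invertibility, so it applies even though $\phi_{(\omega_0,A_0)}$ was here produced via Remark \ref{rem1}) produces the open neighborhood $\mathcal{O}$ of $(\omega_0,A_0)$ and the smooth family $(\omega,A)\mapsto\psi_{(\omega,A)}\in H_{per,e}^2([0,L_0])$ of $L_0$-periodic solutions.

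I expect the one genuine obstacle to be the verification $\theta\neq0$. In contrast with the case $A=0$, the extra term $A$ destroys the scaling symmetry $\psi_\omega=e^{\omega/2}\phi_0$, so no explicit branch is at hand and the simplicity of the zero eigenvalue cannot be inferred from a closed-form solution. Since $\theta$ is tied to the variation of the period map along the energy levels, establishing $\theta\neq0$ is a non-degeneracy (monotonicity) statement for the period of \eqref{travlog}; I would verify it at the fixed datum $(\omega_0,A_0)$ numerically --- in the spirit of the computation $\theta\approx-1.70$ used in the proof of Theorem \ref{teostab} --- or, failing a closed form, through the period representation \eqref{perL} of Corollary \ref{coro1}.
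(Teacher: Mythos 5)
Your proposal follows essentially the same route as the paper: the paper's proof also reduces everything to Corollary \ref{existcurve1}, i.e., to verifying $\theta\neq 0$ for the wave $\phi_{(\omega_0,A_0)}$, and it does so numerically (e.g., $\theta\approx -0.08$ for $(\omega_0,A_0)=(1,1)$, together with a table of further sample values), exactly as you anticipate in your final paragraph. Your additional remarks --- that sign-definiteness of the wave makes the potential smooth so that {\bf (H4)} and the Hill-operator framework apply, and that the Implicit Function Theorem argument of Theorem \ref{existcurve} works even though the wave here comes from Remark \ref{rem1} --- are correct and simply make explicit what the paper leaves implicit.
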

\begin{proof}
To calculate the value of $\theta$ in $(\ref{theta})$ and, consequently, to apply Corollary \ref{existcurve1}, we need to obtain the value of
$\bar{y}'(L_{0})$ by solving the linear equation $(\ref{y})$ with
 $g(\omega_0,A_0,\phi)=\omega_0\phi-\log(\phi^2)\phi+A_0$. To fix ideas, let us consider $\omega_0=1$ and $A_0=1$. In this case, one has $(\omega_0,A_0)\in\mathcal{P}_1$
 and the zeros of the function $g(\omega_0,A_0,\cdot)$ are $r_0=-1$, $r_1=-0.28$ and $r_2=2.09$. To obtain strictly positive solutions with a local maximum at $x=0$, the initial condition of $\phi_{(1,1)}$ must satisfy $2.09<\phi_{(1,1)}(0)<3.51$. Collecting these informations and taking $\phi_{(1,1)}(0)=3$, we are enable to see that  $\phi=\phi_{(1,1)}$ satisfies
 \begin{equation}\label{PVI33} \left\{\begin{array}{lllll}
-\phi''+\phi-\phi\log\phi^2+1=0,\\
\phi(0)=3,\\
\phi'(0)=0.
\end{array}\right.
\end{equation}
The period $L_0$ of $\phi$ can be determined
by $(\ref{perL})$ as $L_0 \approx 4.18$. Solving numerically the initial-value problems
$(\ref{PVI33})$ and \eqref{y}, we can compute the constant
$\theta$ given by (\ref{theta}) as $\theta \approx -0.08$, which allows us to apply Corollary \ref{existcurve1}. In the table below, we present some different values of $\theta$ using the discussion established in Theorem $\ref{teoaomegaA}$.\\

\begin{center}
\begin{tabular}{|c|c|c|c|c|c|c|c|c|}
\hline
\multicolumn{5}{|c|}{\textbf{Values of $\theta$ related to $\omega_0$ and $A_0$}}\\
\hline $\omega_0$ & $A_0$ & $\phi_{(\omega_0,A_0)}(0)$ & $L_0$ &  $\theta$ \\
\hline\hline
2 & 4 & 5 & 3.49  & -0.14  \\
\hline
-1 & -1 & -0.1 & 4.32  & -0.02 \\
\hline
1 & -2 & -2 & 3.65  & -0.03  \\
\hline
-1 & -2 & -1 & 3.37  & -0.03  \\
\hline
\end{tabular}
\end{center}
\begin{tabular}{c r @{.} l}
Table 1. &
\multicolumn{1}{c}{Values of $\theta$ for different $(\omega_0,A_0)$. One has positive and negative solutions.}
\end{tabular}\\ \\
This completes the proof of the theorem.
\end{proof}

The spectral property related to the linearized operator
\be\label{opera22}\mathcal{L}=
\mathcal{L}(v)=- v'' + \big(\omega - 2  -
\log(\psi_{(\omega,A)}^2) \big)v.
\ee
 is deduced by combining the arguments in the proof of Theorem $\ref{teoexisanzero}$ with the approach treated in Section 3. More precisely.

\begin{proposition}\label{prop1}
For $(\omega,A)\in\mathcal{O}$, let
$\psi_{(\omega,A)}$ be the $L_0$-periodic solution determined in Theorem \ref{teoexisanzero}. The closed, unbounded and self-adjoint
operator $\mathcal{L}$ in $(\ref{opera22})$
defined in $L_{per}^2([0,L_0])$ with domain $H_{per}^2([0,L_0])$ has a
unique negative eigenvalue whose associated eigenfunction is even. Zero is  a simple eigenvalue with
associated eigenfunction $\psi_{(\omega,A)}'$. Moreover, the rest of the
spectrum is bounded away from zero.
\end{proposition}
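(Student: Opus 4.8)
The plan is to follow verbatim the strategy already used for the case $A=0$ in Step~2 of the proof of Theorem~\ref{teostab}: rather than analyzing the spectrum of $\mathcal{L}$ directly, I would reduce everything to the computation of a single inertial index and then translate that index into the three spectral assertions. The entire machinery of Section~\ref{secex} applies because $\psi_{(\omega,A)}$ is even (Theorem~\ref{teoexisanzero}) and strictly of one sign (Theorem~\ref{teoaomegaA}), so $\psi_{(\omega,A)}^2>0$, the potential $\omega-2-\log(\psi_{(\omega,A)}^2)$ is smooth and even, and $\mathcal{L}$ is a genuine Hill operator of the form \eqref{hill} with $g'(\omega,A,\psi_{(\omega,A)})=\omega-2-\log(\psi_{(\omega,A)}^2)$.

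First I would record that $\psi_{(\omega,A)}'\in\ker(\mathcal{L})$, obtained by differentiating \eqref{travlog} with respect to $x$, so that $[\psi_{(\omega,A)}']\subseteqq\ker(\mathcal{L})$ as in \eqref{kerphi}. Since $\psi_{(\omega,A)}$ is even and $L_0$-periodic with a single extremum per period (from the construction in Theorem~\ref{existence}, with $\psi_{(\omega,A)}(0)>r_2$), its derivative is odd and vanishes exactly at $0$ and at $L_0/2$, i.e. $\psi_{(\omega,A)}'$ has precisely two zeros in $[0,L_0)$. By the Floquet oscillation characterization quoted in Section~\ref{secex}, $\psi_{(\omega,A)}'$ is therefore the eigenfunction associated with $\lambda_1$ or $\lambda_2$, leaving exactly the three possibilities $in(\mathcal{L})\in\{(1,2),(1,1),(2,1)\}$.

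To single out the correct case I would invoke Theorem~\ref{teo1}: the eigenvalue $0$ is simple if and only if the Floquet constant $\theta$ of \eqref{theta} is nonzero, and $\theta<0$ forces $\lambda_1=0$ (hence $\lambda_1=0<\lambda_2$), whereas $\theta>0$ forces $\lambda_2=0$. The constant $\theta$ is precisely the quantity computed, for the reference parameter $(\omega_0,A_0)$, in the proof of Theorem~\ref{teoexisanzero} and tabulated in Table~1, where every entry satisfies $\theta<0$. Thus at the reference point $in(\mathcal{L}_{(\omega_0,A_0)})=(1,1)$, and by the isoinertial property along the smooth branch of fixed period $L_0$ (Theorem~\ref{teo0} together with Remark~\ref{obs123}), one concludes $in(\mathcal{L})=(1,1)$ for all $(\omega,A)\in\mathcal{O}$, after shrinking $\mathcal{O}$ to a connected neighborhood if necessary.

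Finally I would unpack $in(\mathcal{L})=(1,1)$ into the statement of the proposition. The index $(1,1)$ means $\mathcal{L}$ has exactly one negative eigenvalue $\lambda_0$, which is simple, and that $0$ is simple; simplicity of $0$ and $\psi_{(\omega,A)}'\in\ker(\mathcal{L})$ give $\ker(\mathcal{L})=[\psi_{(\omega,A)}']$, so $\psi_{(\omega,A)}'$ is the zero-eigenfunction. The ground-state eigenfunction for $\lambda_0$ has no zeros in $[0,L_0)$; since the potential is even, $\mathcal{L}$ commutes with the reflection $v(x)\mapsto v(-x)$, and the one-dimensional $\lambda_0$-eigenspace is invariant under it, so the eigenfunction is either even or odd. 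An odd function must vanish at $0$, contradicting the absence of zeros, whence the negative eigenfunction is even. Because $\mathcal{L}$ is a Hill operator its spectrum is a discrete sequence $\lambda_0<\lambda_1=0<\lambda_2\le\lambda_3\le\cdots\to+\infty$, so the remainder of the spectrum is strictly positive and bounded away from zero. The argument is identical for the negative waves of Theorem~\ref{teoaomegaA}, since only $\psi_{(\omega,A)}^2$ enters the potential. The main obstacle is purely the sign (and nonvanishing) of $\theta$: in the absence of a closed-form periodic solution this cannot be obtained analytically, and the proof must rely on the numerical evaluation of $\bar y'(L_0)$ in Theorem~\ref{teoexisanzero}; the essential analytic leverage is the isoinertial invariance, which upgrades that single-point numerical computation to a statement valid on the whole parameter neighborhood $\mathcal{O}$.
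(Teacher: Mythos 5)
Your proposal is correct and follows essentially the same route as the paper: the paper's proof of Proposition \ref{prop1} is exactly the combination of the numerically computed Floquet constant $\theta<0$ from the proof of Theorem \ref{teoexisanzero} (and Table 1), the characterization of the simple zero eigenvalue in Theorem \ref{teo1}, the oscillation count for $\psi_{(\omega,A)}'$, and the isoinertial invariance of Section \ref{secex} to propagate $in(\mathcal{L})=(1,1)$ over all of $\mathcal{O}$. Your added reflection-symmetry argument for the evenness of the negative eigenfunction is a standard detail the paper leaves implicit, and your observation that the whole argument ultimately rests on the numerical evaluation of $\theta$ accurately reflects the paper's own admission.
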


Next, we present our stability result by  adapting the arguments in \cite{bona2}, \cite{grillakis1}, and \cite{johnson}. So, in what follows, we let $\psi=\psi_{(\omega_0,A_0)}$ be any $L_0$-periodic solution given in Theorem \ref{teoexisanzero}. Also,  define
$$
\eta:=\frac{\partial}{\partial\omega}\psi_{(\omega,A)}\Big|_{(\omega_0,A_0)},\ \qquad\beta:=\frac{\partial}{\partial A}\psi_{(\omega,A)}\Big|_{(\omega_0,A_0)},
$$
and set
 $$
 M_{\omega}(\psi)=\frac{\partial}{\partial\omega}\int_0^{L_0}\psi_{(\omega,A)}(x)dx\Big|_{(\omega_0,A_0)},\qquad  M_{A}(\psi)=\frac{\partial}{\partial A}\int_0^{L_0}\psi_{(\omega,A)}(x)dx\Big|_{(\omega_0,A_0)},
 $$
 and
 $$
 F_{\omega}(\psi)=\frac{1}{2}\frac{\partial}{\partial{\omega}}\int_0^{L_0}\psi_{(\omega,A)}^2(x)dx\Big|_{(\omega_0,A_0)}, \qquad F_{A}(\psi)=\frac{1}{2}\frac{\partial}{\partial{A}}\int_0^{L_0}\psi_{(\omega,A)}^2(x)dx\Big|_{(\omega_0,A_0)}.
 $$
In order to simplify the notation, the  norm and inner product in $L_{per}^2([0,L_0])$ will be denoted by  $||\cdot||$ and $\langle\cdot,\cdot\rangle$.

Before stating our main theorem we need some preliminary results. We let  $\rho$ be the semi-distance defined on the space $X$ as
\be\label{rho}
\rho(u,\psi)=\inf_{y\in\mathbb{R}}||u-\psi(\cdot+y)||_{X}.
\ee
For a given $\varepsilon>0$, we define the $\varepsilon$-neighborhood of the orbit $O_\psi$ as
\be \label{tube}U_{\varepsilon} := \{u\in X;\ \rho(u,\psi) < \varepsilon\}.\ee
We also introduce the smooth manifolds
\be\label{manifold}
\Sigma_0=\{u\in X;\ F(u)=F(\psi),\ M(u)=M(\psi)\},
\ee
and
\be\label{tau0}
\Upsilon_0=\{u\in X;\ \langle \psi,u\rangle=\langle 1,u\rangle=0\}.
\ee

The next result state that under a suitable restriction, the operator $\mathcal{L}$ is strictly positive.

\begin{proposition}\label{prop2}
Assume that there is $\Phi\in X$ such that $\langle\mathcal{L}\Phi,\varphi\rangle=0$, for all $\varphi\in \Upsilon_0$, and
\be\label{defiI}
\mathcal{I}:=\langle\mathcal{L}\Phi,\Phi\rangle<0
\ee
Then, there is a constant $c>0$ such that
$$\langle\mathcal{L}v,v\rangle\geq c||v||_{X}^2,$$
for all $v\in \Upsilon_0$ such that $\langle v,\psi'\rangle=0$.
\end{proposition}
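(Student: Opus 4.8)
The plan is to exploit the spectral picture furnished by Proposition \ref{prop1}: the operator $\mathcal{L}$ has a single simple negative eigenvalue $\lambda_0<0$ with normalized eigenfunction $\chi$, a simple zero eigenvalue with eigenfunction $\psi'$, and the remainder of its spectrum contained in $[\delta,\infty)$ for some $\delta>0$. This yields an $L^2_{per}([0,L_0])$-orthogonal decomposition $L^2_{per}([0,L_0])=\mathrm{span}\{\chi\}\oplus\mathrm{span}\{\psi'\}\oplus P$, with $\langle\mathcal{L}p,p\rangle\geq\delta\|p\|^2$ for $p\in P$. My first observation is that the two linear constraints defining $\Upsilon_0$ repackage, via the auxiliary vector $\Phi$ and self-adjointness, into a single $\mathcal{L}$-orthogonality condition: since $\langle\mathcal{L}\Phi,\varphi\rangle=0$ for every $\varphi\in\Upsilon_0$, the element $\mathcal{L}\Phi$ lies in $\Upsilon_0^{\perp}=\mathrm{span}\{\psi,1\}$, so for any $v\in\Upsilon_0$ one has $\langle\mathcal{L}v,\Phi\rangle=\langle v,\mathcal{L}\Phi\rangle=0$. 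Thus every admissible $v$ is automatically $\mathcal{L}$-orthogonal to $\Phi$, and this is exactly what will let the negative direction be absorbed.

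Next I would carry out the decisive estimate. Writing $v=a\chi+b\psi'+p$ with $p\in P$, the constraint $\langle v,\psi'\rangle=0$ forces $b=0$ (because $\chi$, $\psi'$ and $P$ are mutually orthogonal), so $\langle\mathcal{L}v,v\rangle=a^2\lambda_0+\langle\mathcal{L}p,p\rangle$ and $\|v\|^2=a^2+\|p\|^2$. Decomposing likewise $\Phi=a_\Phi\chi+b_\Phi\psi'+p_\Phi$, the hypothesis $\mathcal{I}=a_\Phi^2\lambda_0+\langle\mathcal{L}p_\Phi,p_\Phi\rangle<0$ forces $a_\Phi\neq0$ and $\langle\mathcal{L}p_\Phi,p_\Phi\rangle<a_\Phi^2|\lambda_0|$. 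Expanding $0=\langle\mathcal{L}v,\Phi\rangle$ and discarding the vanishing cross terms gives $a\,a_\Phi\lambda_0=-\langle\mathcal{L}p,p_\Phi\rangle$. Applying the Cauchy--Schwarz inequality for the positive-definite form $\langle\mathcal{L}\,\cdot\,,\cdot\rangle$ on $P$ then yields $a^2a_\Phi^2\lambda_0^2\leq\langle\mathcal{L}p,p\rangle\,\langle\mathcal{L}p_\Phi,p_\Phi\rangle$, whence $\langle\mathcal{L}p,p\rangle\geq\kappa\,a^2|\lambda_0|$ with $\kappa:=a_\Phi^2|\lambda_0|/\langle\mathcal{L}p_\Phi,p_\Phi\rangle>1$. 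Combining this with $\langle\mathcal{L}p,p\rangle\geq\delta\|p\|^2$ through a convex parameter $\theta\in(1/\kappa,1)$ gives $\langle\mathcal{L}v,v\rangle\geq|\lambda_0|(\theta\kappa-1)a^2+(1-\theta)\delta\|p\|^2\geq c_0\|v\|^2$ in $L^2_{per}$ for some $c_0>0$.

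Finally I would upgrade this $L^2$-coercivity to the $X=H^1_{per}$-coercivity claimed. Since the potential $Q:=\omega-2-\log\psi_{(\omega,A)}^2$ is continuous and periodic, hence bounded, we have $\|v'\|^2=\langle\mathcal{L}v,v\rangle-\langle Qv,v\rangle\leq\langle\mathcal{L}v,v\rangle+\|Q\|_{\infty}\|v\|^2$; inserting the bound $\|v\|^2\leq c_0^{-1}\langle\mathcal{L}v,v\rangle$ yields $\|v\|_X^2=\|v'\|^2+\|v\|^2\leq C\langle\mathcal{L}v,v\rangle$ for an explicit $C>0$, which is the assertion with $c=C^{-1}$.

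The main obstacle is conceptual rather than computational: recognizing that the pair of constraints $\langle v,\psi\rangle=\langle v,1\rangle=0$ should be converted, through $\Phi$ and self-adjointness, into the single relation $\langle\mathcal{L}v,\Phi\rangle=0$, after which the Cauchy--Schwarz step on the positive subspace $P$ does the work. I would also need to dispatch the degenerate cases, namely that $a_\Phi\neq0$ (secured by $\mathcal{I}<0$) and that when $\langle\mathcal{L}p_\Phi,p_\Phi\rangle=0$ the orthogonality identity directly forces $a=0$, so that coercivity is immediate on $P$; the spectral decomposition itself is supplied verbatim by Proposition \ref{prop1}.
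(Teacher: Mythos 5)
Your proof is correct and takes essentially the same approach as the paper's: the same spectral decomposition $L^2_{per}([0,L_0])=[\chi]\oplus[\psi']\oplus P$ furnished by Proposition \ref{prop1}, the same expansions of $\Phi$ and of $v$, and the same key identity $a\,a_\Phi\lambda_0=-\langle\mathcal{L}p,p_\Phi\rangle$ followed by Cauchy--Schwarz for the positive form on $P$. The only difference is that you carry out in full the final coercivity and $H^1$-upgrade steps that the paper delegates to \cite[Lemma 5.1]{bona2} and \cite[Lemma 4.4]{johnson}.
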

\begin{proof}
We shall give only a sketch of the proof. From Proposition $\ref{prop1}$ one has
\be\label{decomp}L_{per}^2([0,L_0])=[\chi]\oplus [\psi']\oplus P,\ee
where $\chi$ satisfies $||\chi||=1$ and $\mathcal{L}\chi=-\lambda_0^2\chi$, $\lambda_0\neq0$.  By using the arguments  in \cite[page 278]{kato1}, we obtain that $$\langle \mathcal{L}p,p\rangle\geq c_1||p||^2,\ \ \ \ \ \mbox{for all}\ p\in H_{per}^2([0,L_0])\cap P,$$
where $c_1$ is a positive constant.

Next, from $(\ref{decomp})$, we write
$$\
\Phi=a_0\chi+b_0\psi'+p_0,\ \ \ \ \ a_0,b_0\in\mathbb{R},
$$
where $p_0\in H_{per}^2([0,L_0])\cap P$. Now, since $\psi'\in \ker (\mathcal{L})$, $\mathcal{L}\chi=-\lambda_0^2\chi$, and $\mathcal{I}<0$, we obtain
\be
\langle \mathcal{L} p_0,p_0\rangle=\langle\mathcal{L}(\Phi-a_0\chi-b_0\psi'),\Phi-a_0\chi-b_0\psi'\rangle
=\langle\mathcal{L}\Phi,\Phi\rangle+a_0^2\lambda_0^2<a_0^2\lambda_0^2.
\ee
\indent Taking $\varphi\in \Upsilon_0$ such that $||\varphi||=1$ and $\langle \varphi,\psi'\rangle=0$, we can write $\varphi=a_1\chi+p_1$, where $p_1\in X\cap P$. Thus,
\begin{equation}
0=\langle\mathcal{L}\Phi,\varphi\rangle=\langle -a_0\lambda_0^2\chi +\mathcal{L}p_0,a_1\chi+p_1\rangle
=-a_0a_1\lambda_0^2+\langle\mathcal{L}p_0,p_1\rangle.
\end{equation}
The rest of the proof runs as in \cite[Lemma 5.1]{bona2} (see also \cite[Lemma 4.4]{johnson}).

\end{proof}

 Proposition \ref{prop2} is useful to establish  the following result.

\begin{proposition}\label{prop3}
Let $E$ be the conserved quantity defined in $(\ref{conser12})$. Under the assumptions of Proposition $\ref{prop2}$ there are $\alpha>0$ and $M=M(\alpha)>0$ such that
$$E(u)-E(\psi)\geq M\rho(u,\psi)^2,$$
for all $u\in U_{\alpha}\cap\Sigma_0$.
\end{proposition}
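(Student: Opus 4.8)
The plan is to run the standard Grillakis--Shatah--Strauss / Bona--Souganidis--Strauss energy estimate, feeding in the coercivity furnished by Proposition \ref{prop2}. Set $\mathcal{H}:=E+\omega_0F+A_0M$. Differentiating \eqref{travlog} shows that $\psi=\psi_{(\omega_0,A_0)}$ is a critical point, $\mathcal{H}'(\psi)=E'(\psi)+\omega_0F'(\psi)+A_0M'(\psi)=0$, while a direct computation gives $\mathcal{H}''(\psi)=\mathcal{L}$, the operator in \eqref{opera22}; moreover, since $\psi$ is bounded away from zero, $\mathcal{H}$ is of class $C^2$ in a neighborhood of $\psi$ in $X$. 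The first reduction is that on $\Sigma_0$ one has $F(u)=F(\psi)$ and $M(u)=M(\psi)$, whence $E(u)-E(\psi)=\mathcal{H}(u)-\mathcal{H}(\psi)$; since $E$, $F$, $M$ are translation invariant, $\mathcal{H}(u)=\mathcal{H}(T(y)u)$ for every $y\in\R$, so it suffices to estimate $\mathcal{H}(T(y)u)-\mathcal{H}(\psi)$ for a conveniently chosen translate.

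Next I would invoke the usual modulation lemma: applying the Implicit Function Theorem to $(u,y)\mapsto\langle T(y)u,\psi'\rangle$ at $(\psi,0)$---nondegenerate because $\tfrac{d}{dy}\langle T(y)\psi,\psi'\rangle|_{y=0}=\|\psi'\|^2>0$---yields, for $\alpha$ small and every $u\in U_\alpha$, a shift $y=y(u)$ such that $v:=T(y(u))u-\psi$ satisfies $\langle v,\psi'\rangle=0$ together with $\rho(u,\psi)\le\|v\|_X\le C\,\rho(u,\psi)$. The constraints defining $\Sigma_0$ then translate into information on $v$: the mass constraint gives $\langle 1,v\rangle=\int_0^{L_0}v\,dx=0$ exactly, whereas $F(T(y)u)=F(\psi)$ expands to $\langle\psi,v\rangle=-\tfrac12\|v\|^2$, so orthogonality to $\psi$ holds only up to a quadratically small error.

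To use Proposition \ref{prop2}, whose coercivity holds on $N:=\{w\in X:\langle\psi,w\rangle=\langle 1,w\rangle=\langle\psi',w\rangle=0\}$, I would project $v$ onto $N$ by writing $v=v_N+\sigma\psi+\delta\cdot 1$ and choosing $(\sigma,\delta)$ so that $v_N\in N$. Because $\langle\psi',\psi\rangle=\langle\psi',1\rangle=0$ by periodicity, the condition $\langle\psi',v_N\rangle=0$ is automatic, and $(\sigma,\delta)$ solves a $2\times2$ linear system whose determinant equals $L_0\|\psi\|^2-(\int_0^{L_0}\psi\,dx)^2$; this is strictly positive by the Cauchy--Schwarz inequality, since $\psi$ is non-constant. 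As the right-hand side of the system is $O(\|v\|^2)$, I obtain $\sigma,\delta=O(\|v\|^2)$ and hence $\|v-v_N\|_X=O(\|v\|^2)$. Feeding this into the quadratic form and applying Proposition \ref{prop2} to $v_N$ gives $\langle\mathcal{L}v,v\rangle=\langle\mathcal{L}v_N,v_N\rangle+O(\|v\|_X^3)\ge c\|v_N\|_X^2+O(\|v\|_X^3)\ge\tfrac{c}{2}\|v\|_X^2$ once $\|v\|_X$ is small.

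Finally, the $C^2$ Taylor expansion of $\mathcal{H}$ at $\psi$ gives $\mathcal{H}(\psi+v)-\mathcal{H}(\psi)=\tfrac12\langle\mathcal{L}v,v\rangle+o(\|v\|_X^2)$, so that $E(u)-E(\psi)=\mathcal{H}(\psi+v)-\mathcal{H}(\psi)\ge\tfrac{c}{4}\|v\|_X^2+o(\|v\|_X^2)\ge M\|v\|_X^2\ge M\rho(u,\psi)^2$ for $\alpha$ small enough, where one uses $\|v\|_X\le C\rho(u,\psi)<C\alpha$ to make every smallness requirement uniform on $U_\alpha$; this is the claim. I expect the main obstacle to be precisely the bookkeeping in the third paragraph: the $F$-constraint furnishes orthogonality to $\psi$ only up to the quadratic error $-\tfrac12\|v\|^2$, so one must verify that the correction $\sigma\psi+\delta\cdot 1$ needed to land in $N$ is genuinely second order and that all resulting cross terms are absorbed into $o(\|v\|_X^2)$; securing the comparability $\|v\|_X\le C\rho(u,\psi)$ from the modulation is the other point requiring care.
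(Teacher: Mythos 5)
Your proposal is correct and takes essentially the approach the paper relies on: the paper's own proof of Proposition \ref{prop3} is simply a citation of \cite[Lemma 4.6]{johnson}, and the argument there is precisely your scheme --- modulation via the Implicit Function Theorem, decomposition of the perturbation against $\psi$, $1$, and $\psi'$ with quadratically small corrections forced by the constraints defining $\Sigma_0$, coercivity from Proposition \ref{prop2}, and a $C^2$ Taylor expansion of $E+\omega_0 F+A_0 M$ (legitimate near $\psi$ because $\psi$ is bounded away from zero and $H^1_{per}\hookrightarrow L^\infty$). The two delicate points you flag are genuine but resolve exactly as you indicate; in particular, the comparability $\|v\|_X\le C\rho(u,\psi)$ is even easier here than on the line, since the translation parameter ranges over the compact set $[0,L_0]$, so the infimum defining $\rho$ is attained.
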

\begin{proof}
The proof can be found in \cite[Lemma 4.6]{johnson}. So, we omit the details.
\end{proof}

\indent Finally, we present our stability result. In what follows in this section, we assume that  uniqueness and continuous dependence hold according
to Theorem \ref{teoWP}.

\begin{theorem}\label{teo2}
Let $\psi=\psi_{(\omega_0,A_0)}$ be a periodic solution given in Theorem \ref{teoexisanzero}. Assume that the matrix
$$
\mathcal{D}:=\left[\begin{array}{llll} F_{A}(\psi)\ \ M_A(\psi)\\
F_{\omega}(\psi)\ \ M_{\omega}(\psi)\end{array}\right]
$$ is invertible. If there is $\Phi\in X$ such that $\langle\mathcal{L}\Phi,\varphi\rangle=0$, for all $\varphi\in \Upsilon_0$, and $\mathcal{I}=\langle\mathcal{L}\Phi,\Phi\rangle<0$, then $\psi$ is orbitally stable in $X$ by the periodic flow of   $(\ref{logKDV})$.
\end{theorem}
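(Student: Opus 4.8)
The plan is to realize $\psi=\psi_{(\omega_0,A_0)}$ as a constrained critical point and to run the classical Lyapunov/contradiction scheme of \cite{grillakis1,bona2,johnson}, with Proposition \ref{prop3} furnishing the coercive lower bound for the energy and the invertibility of $\mathcal{D}$ furnishing a projection onto the constraint manifold $\Sigma_0$. First I would record that, by Theorem \ref{teoWP} together with the standing hypothesis of uniqueness and continuous dependence, the solution issuing from any $u_0\in X$ exists globally in $C(\R;X)$ and conserves $E$, $F$ and $M$. Since $\psi$ solves \eqref{travlog}, it is a critical point of the augmented functional $\mathcal{G}:=E+\omega_0 F+A_0 M$, and a direct computation gives $\mathcal{G}''(\psi)=\mathcal{L}$, the operator in \eqref{opera22}. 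Because $\psi^2>0$, the functional $E$ (and hence $\mathcal{G}$) is smooth in an $X$-neighborhood of the orbit $O_\psi$, so all the Fr\'echet expansions below are legitimate. Note that the hypothesis on $\Phi$ is exactly what Propositions \ref{prop2} and \ref{prop3} require, while the invertibility of $\mathcal{D}$ is the additional ingredient needed only for the projection step.

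I would then argue by contradiction. Suppose $\psi$ is not orbitally stable. Then there exist $\varepsilon_0\in(0,\alpha)$, with $\alpha$ the radius from Proposition \ref{prop3}, a sequence of data $u_0^n\to\psi$ in $X$, and first times $t_n$ at which $\rho(u^n(t_n),\psi)=\varepsilon_0$, where $u^n$ solves \eqref{logKDV} with $u^n(0)=u_0^n$ (such first times exist by continuity of $t\mapsto\rho(u^n(t),\psi)$ and the fact that $\rho(u_0^n,\psi)\to0$). Writing $w_n:=u^n(t_n)\in U_{\varepsilon_0}\subset U_{\alpha}$, conservation of the three quantities and continuity of $E,F,M$ yield
\[
E(w_n)=E(u_0^n)\to E(\psi),\qquad F(w_n)=F(u_0^n)\to F(\psi),\qquad M(w_n)=M(u_0^n)\to M(\psi),
\]
so in particular $\delta_n:=|F(w_n)-F(\psi)|+|M(w_n)-M(\psi)|\to0$.

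The core of the argument is to replace each $w_n$ by a nearby point of $\Sigma_0$. With $\eta=\partial_\omega\psi|_{(\omega_0,A_0)}$ and $\beta=\partial_A\psi|_{(\omega_0,A_0)}$, I would set $\hat{w}_n:=w_n+s_n\eta+r_n\beta$ and solve $F(\hat{w}_n)=F(\psi)$, $M(\hat{w}_n)=M(\psi)$ for $(s_n,r_n)$. Since $F'(\psi)=\psi$ and $M'(\psi)=1$, one has $\langle F'(\psi),\eta\rangle=F_\omega(\psi)$, $\langle F'(\psi),\beta\rangle=F_A(\psi)$, $\langle M'(\psi),\eta\rangle=M_\omega(\psi)$ and $\langle M'(\psi),\beta\rangle=M_A(\psi)$, so the Jacobian of this $2\times2$ system at $(s,r)=(0,0)$ coincides with $\mathcal{D}$ after a row permutation and transposition; hence it is invertible precisely when $\mathcal{D}$ is. The Implicit Function Theorem then provides, for $n$ large, a unique small solution with $|s_n|+|r_n|\le C\delta_n\to0$, so that $\hat{w}_n\in\Sigma_0\cap U_{\alpha}$. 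Applying Proposition \ref{prop3} to $\hat{w}_n$ and expanding $E$ to first order about $w_n$ gives, with $c>0$ the constant of that proposition,
\[
c\,\rho(\hat{w}_n,\psi)^2\le E(\hat{w}_n)-E(\psi)\le |E(w_n)-E(\psi)|+C(|s_n|+|r_n|)\longrightarrow 0.
\]
Thus $\rho(\hat{w}_n,\psi)\to0$, and since $\|\hat{w}_n-w_n\|_X\le C(|s_n|+|r_n|)\to0$ we obtain $\rho(w_n,\psi)\to0$, contradicting $\rho(w_n,\psi)=\varepsilon_0>0$.

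The main obstacle is the projection step: one must ensure that the two directions $\eta,\beta$ genuinely span the local variation of the pair $(F,M)$—this is exactly the nondegeneracy encoded by $\mathcal{D}$—and one must control the correction $(s_n,r_n)$ linearly by $\delta_n$, so that it can be passed to the limit both in the $E$-expansion and in the distance estimate. A secondary point requiring care is the interaction between the translation parameter in $\rho$ and the modification; since $E$, $F$, $M$ and $\mathcal{L}$ are translation-invariant, one may fix a near-optimal translate of $\psi$ before adjusting, whence the estimates above hold uniformly in $n$.
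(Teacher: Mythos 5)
Your proof is correct in substance, but it is not the paper's argument: the two handle data off the constraint manifold in dual ways. You run a contradiction/exit-time scheme and, at the exit time, project the solution $w_n$ onto the \emph{fixed} manifold $\Sigma_0$ by adding multiples of the tangent directions $\eta,\beta$ of the wave family, invertibility of $\mathcal{D}$ entering as invertibility of the Jacobian of $(s,r)\mapsto\bigl(F(w_n+s\eta+r\beta),M(w_n+s\eta+r\beta)\bigr)$. The paper instead argues directly (no contradiction): for $u_0\in\Sigma_0$ it combines conservation of $F,M$, the coercivity of Proposition \ref{prop3}, and a continuation-in-time argument; for $u_0\notin\Sigma_0$ it applies the Inverse Function Theorem to the parameter map $(\omega,A)\mapsto\bigl(M(\psi_{(\omega,A)}),F(\psi_{(\omega,A)})\bigr)$ --- whose Jacobian has the same determinant as $\mathcal{D}$ --- to replace $\psi$ by a nearby wave $\psi_{(\omega_1,A_1)}$ whose invariants match those of $u_0$, and then concludes via the first case and the triangle inequality. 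In short: you move the solution onto the wave's constraint manifold; the paper moves the wave so that its constraint manifold contains the data. Your version keeps a single reference wave and invokes Proposition \ref{prop3} only for $\psi$ itself, at the price of two points you should make explicit: (i) the Jacobian at $(s,r)=(0,0)$ involves $F'(w_n)=w_n$, not $F'(\psi)=\psi$, so it equals (a permutation/transposition of) $\mathcal{D}$ only up to an $O(\varepsilon_0)$ error; this is harmless because instability at some $\varepsilon_0$ implies instability at every smaller $\varepsilon_0$, so you may shrink $\varepsilon_0$ until the Jacobian is uniformly invertible, but the reduction must be stated; (ii) the IFT constants must be uniform in $n$, which indeed holds since $M$ is affine and $F$ is a quadratic polynomial along the plane $w_n+\mathrm{span}\{\eta,\beta\}$. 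Conversely, the paper's route avoids any projection of the solution but implicitly needs the coercivity of Proposition \ref{prop3} to be available for the nearby wave $\psi_{(\omega_1,A_1)}$ (not just for $\psi$), which it justifies only by appealing to smoothness of the family --- a point your approach sidesteps entirely.
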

\begin{proof}
Let $\alpha>0$ be the constant such that Proposition $\ref{prop3}$ holds.  Since $E$ is continuous at $\psi$, for a given $\varepsilon>0$, there exists $\delta\in (0,\alpha)$ such that if $\|u_0-\psi\|<\delta$  one has
\be\label{estepsilon}
E(u_0)-E(\psi)<M\varepsilon^2,
\ee
where $M>0$ is the constant in Proposition \ref{prop3}. We need to divide our proof into two cases.\\
\indent \textit{First case.} $u_0\in \Sigma_0$. Since $F$ and $M$ are conserved quantities, if $u_0\in\Sigma_0$ one has that $u(t)\in \Sigma_0$, for all $t\geq0$. The time continuity of the function $\rho(u(t),\psi)$ allows to choose $T>0$ such that \be\label{subalpha}\rho(u(t),\psi)<\alpha,\ \ \  \mbox{for all}\ t\in [0,T).\ee
Thus, one obtains $u(t)\in U_{\alpha}$, for all $t\in[0,T)$. Combining Proposition \ref{prop3} and $(\ref{estepsilon})$, we have
\be\label{estepsilon1}
\rho(u(t),\psi)<\varepsilon,\ \ \ \ \ \mbox{for all}\ t\in[0,T).
\ee
\indent Next, we prove that $\rho(u(t),\psi)<\alpha$, for all $t\in [0,+\infty)$, from which one concludes the orbital stability restricted to perturbations in the manifold $\Sigma_0$. Indeed, let  $T_1>0$ be the supremum of the values of $T>0$ for which $(\ref{subalpha})$ holds. To obtain a contradiction, suppose that $T_1<+\infty$.  By choosing $\varepsilon<\frac{\alpha}{2}$ we obtain, from $(\ref{estepsilon1})$,
$$
\rho(u(t),\psi)<\frac{\alpha}{2}, \ \ \ \ \ \mbox{for all}\ t\in[0,T_1).
$$
Since $t\in(0,+\infty)\mapsto\rho(u(t),\psi)$ is continuous, there is $T_0>0$ such that
$\rho(u(t),\psi)<\frac{3}{4}\alpha<\alpha$, for $t\in [0,T_1+T_0)$, contradicting the maximality of $T_1$. Therefore, $T_1=+\infty$ and the theorem  is established if $u_0\in\Sigma_0$.\\
\indent \textit{Second case.} $u_0\notin \Sigma_0$. In this case, since $\det(\mathcal{D})\neq 0$, we claim that there is  $(\omega_1,A_1)\in\mathcal{O},$  such that $F(\psi_{(\omega_1,A_1)})=F(u_0)$ and $M(\psi_{(\omega_1,A_1)})=M(u_0)$.\\
\indent In fact, since $M$ and $F$ are smooth, the Inverse Function Theorem implies the existence of $r_1,r_2>0$ such that the map
$$\begin{array}{ccc}\Gamma:B_{r_1}(\omega_0,A_0)&\longrightarrow& B_{r_2}(M(\psi),F(\psi))\\
(\omega,A)&\mapsto& (M(\psi_{(\omega,A)}),F(\psi_{(\omega,A)}))\end{array},$$

\noindent is a smooth diffeomorphism. Here, $B_{r}((x,y))$ denotes the  open ball in $\mathbb{R}^2$ centered in $(x,y)$ with radius $r>0$.  The continuity of the functionals $M$ and $V$ gives (if necessary we can take a smaller $\delta>0$)
$$|M(u_0)-M(\psi)|<\dfrac{r_2}{\sqrt2} \quad \mbox{and} \quad |F(u_0)-F(\psi)|<\dfrac{r_2}{\sqrt2},
$$
 that is, $(M(u_0),F(u_0))\in B_{r_2}(M(\psi),F(\psi))$. Since $\Gamma$ is a diffeomorphism, there is a unique $(\omega_1,A_1)\in B_{r_1}(\omega_0,A_0)$ such that $(M(u_0),F(u_0))=(M(\psi_{(\omega_1,A_1)}),F(\psi_{(\omega_1,A_1)}))$. The claim is thus proved.\\
\indent The remainder of the proof follows from the smoothness of the periodic wave with respect to the parameters, the fact that the period does not change whether $(\omega,A)\in\mathcal{O}$ and the triangle inequality.
\end{proof}

Theorem \ref{teo2} establishes the orbital stability of $\psi$ provided $det(\mathcal{D})\neq0$ and $\mathcal{I}<0$. The next proposition gives a sufficient condition to show that $\mathcal{I}<0$.

\begin{proposition}\label{propKpos}
Let $K:\R^2\to\R$ be the function defined as
$$
K(x,y)=x^2M_A(\psi)+xy(M_\omega(\psi)+F_A(\psi))+y^2F_\omega(\psi).
$$
Assume that there is $(a,b)\in\R^2$ such that $K(a,b)>0$. Then there is $\Phi\in X$ such that   $\langle\mathcal{L}\Phi,\varphi\rangle=0$, for all $\varphi\in \Upsilon_0$, and
$$
\mathcal{I}=\langle \mathcal{L}\Phi,\Phi \rangle<0.
$$
\end{proposition}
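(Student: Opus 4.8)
The plan is to build $\Phi$ explicitly as a linear combination of the two parameter-derivatives $\eta=\partial_\omega\psi_{(\omega,A)}|_{(\omega_0,A_0)}$ and $\beta=\partial_A\psi_{(\omega,A)}|_{(\omega_0,A_0)}$, which lie in $H^2_{per}([0,L_0])\subset X$ by the smoothness of the family granted in Theorem \ref{teoexisanzero}. First I would differentiate the profile equation $-\psi_{(\omega,A)}''+g(\omega,A,\psi_{(\omega,A)})=0$ with respect to $\omega$ and to $A$ and evaluate at $(\omega_0,A_0)$. Since $\partial_\phi g=\omega-2-\log\psi^2$ is precisely the potential of $\mathcal{L}$, while $\partial_\omega g=\psi$ and $\partial_A g=1$, this yields the two key identities
\[
\mathcal{L}\eta=-\psi,\qquad \mathcal{L}\beta=-1.
\]

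Next I would set $\Phi:=a\beta+b\eta$, where $(a,b)\in\R^2$ is the pair furnished by the hypothesis with $K(a,b)>0$. From the identities above, $\mathcal{L}\Phi=-a-b\psi$. Hence, for any $\varphi\in\Upsilon_0$ (so that $\langle 1,\varphi\rangle=\langle\psi,\varphi\rangle=0$ by the definition of $\Upsilon_0$),
\[
\langle\mathcal{L}\Phi,\varphi\rangle=-a\langle 1,\varphi\rangle-b\langle\psi,\varphi\rangle=0,
\]
so the required orthogonality holds automatically, for \emph{every} choice of $(a,b)$. This is the point of the construction: the annihilation of $\Phi$ on $\Upsilon_0$ is free, and the only remaining issue is the sign of $\mathcal{I}$.

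The crux is therefore the algebraic identity $\mathcal{I}=-K(a,b)$. Expanding $\mathcal{I}=\langle\mathcal{L}\Phi,\Phi\rangle=\langle -a-b\psi,\,a\beta+b\eta\rangle$ and using the identifications $\langle 1,\beta\rangle=M_A(\psi)$, $\langle 1,\eta\rangle=M_\omega(\psi)$, $\langle\psi,\beta\rangle=F_A(\psi)$, $\langle\psi,\eta\rangle=F_\omega(\psi)$ (which come from differentiating the integrals defining $M$ and $F$ under the integral sign), I expect
\[
\mathcal{I}=-\big(a^2 M_A(\psi)+ab(M_\omega(\psi)+F_A(\psi))+b^2 F_\omega(\psi)\big)=-K(a,b)<0,
\]
which is exactly the conclusion. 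The only step needing genuine care is justifying differentiation under the integral sign and matching $M_\omega,M_A,F_\omega,F_A$ with these inner products; once $\mathcal{I}=-K(a,b)$ is in place the proposition follows immediately, and notably no optimization over $(a,b)$ is required, since a single pair with $K(a,b)>0$ suffices.
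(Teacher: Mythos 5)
Your proposal is correct and coincides with the paper's own proof: the authors also set $\Phi=a\beta+b\eta$, use $\mathcal{L}\beta=-1$ and $\mathcal{L}\eta=-\psi$ to get orthogonality to $\Upsilon_0$ for free, and compute $\langle\mathcal{L}\Phi,\Phi\rangle=-K(a,b)<0$. The only difference is that you spell out the derivation of the identities $\mathcal{L}\beta=-1$, $\mathcal{L}\eta=-\psi$ and the matching of the inner products with $M_A,M_\omega,F_A,F_\omega$, which the paper takes as known.
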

\begin{proof}
It suffices to define  $\Phi:=a\beta+b\eta$. Indeed, since $\mathcal{L}\beta=-1$ and $\mathcal{L}\eta=-\psi$, it is clear that  $\langle\mathcal{L}\Phi,\varphi\rangle=0$, for all $\varphi\in \Upsilon_0$, and
\[
\begin{split}
 \langle\mathcal{L}\Phi,\Phi \rangle&=\langle-a-b\psi,a\beta+b\eta\rangle\\
 &=-(a^2M_A(\psi)+abM_\omega(\psi)+abF_A(\psi)+b^2F_\omega(\psi))\\
 &=-K(a,b).
\end{split}
\]
The proof is thus completed.
\end{proof}

\begin{corollary}
Assume that $A_0$ is sufficiently small. Then $\psi=\psi_{(\omega_0,A_0)}$ is orbitally stable in $X$ provided $det(\mathcal{D})\neq0$.
\end{corollary}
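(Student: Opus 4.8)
The plan is to reduce everything to Theorem~\ref{teo2} through Proposition~\ref{propKpos}. Since $\det(\mathcal{D})\neq0$ is already assumed, the only missing hypothesis of Theorem~\ref{teo2} is the existence of some $\Phi\in X$ with $\langle\mathcal{L}\Phi,\varphi\rangle=0$ for all $\varphi\in\Upsilon_0$ and $\mathcal{I}=\langle\mathcal{L}\Phi,\Phi\rangle<0$. By Proposition~\ref{propKpos}, it suffices to exhibit a single point $(a,b)\in\R^2$ at which the quadratic form $K$ is strictly positive. I would take the particular choice $(a,b)=(0,1)$, for which $K(0,1)=F_\omega(\psi)$. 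Thus the whole corollary reduces to the single inequality $F_\omega(\psi_{(\omega_0,A_0)})>0$ for $|A_0|$ small.

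To verify this, I would argue by continuity in the parameters. The map $(\omega,A)\mapsto\psi_{(\omega,A)}$ furnished by Theorem~\ref{teoexisanzero} is smooth in both variables, so the derivative $\eta=\partial_\omega\psi_{(\omega,A)}$ and hence the quantity $F_\omega(\psi)=\int_0^{L_0}\psi_{(\omega,A)}\,\eta\,dx$ depend continuously on $(\omega_0,A_0)$, on a neighborhood reaching $A_0=0$. At $A_0=0$ the family collapses to the explicit branch $\psi_\omega=e^{\omega/2}\phi_0$ of Proposition~\ref{proexazero1}, for which the computation already carried out in Step~3 of the proof of Theorem~\ref{teostab} gives $F_\omega(\psi)\big|_{A_0=0}=\tfrac12\, e^{\omega_0}\int_0^{L_0}\phi_0^2\,dx>0$. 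By continuity, $F_\omega(\psi_{(\omega_0,A_0)})>0$ whenever $|A_0|$ is sufficiently small.

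Combining the two steps, for $|A_0|$ small one has $K(0,1)=F_\omega(\psi)>0$, so Proposition~\ref{propKpos} produces the desired $\Phi$ with $\mathcal{I}<0$. Together with the standing assumption $\det(\mathcal{D})\neq0$, Theorem~\ref{teo2} then yields the orbital stability of $\psi=\psi_{(\omega_0,A_0)}$ in $X$.

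The delicate point I expect to be the main obstacle is the continuity claim joining the two-parameter family across $A=0$: one must check that the Implicit Function Theorem construction of Theorem~\ref{teoexisanzero}, which keeps the period $L_0$ fixed, extends continuously to $A_0=0$ and there agrees with the explicit family of Proposition~\ref{proexazero1}, so that $F_\omega$ is genuinely continuous up to $A_0=0$. This hinges on the joint smoothness of $g$ in $(\omega,A)$ and on the fact that the kernel condition $\ker(\mathcal{L})=[\psi']$ persists as $A_0\to0$, which is what allows the branches to be identified and the limiting value of $F_\omega$ to be read off from the first case.
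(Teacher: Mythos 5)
Your reduction is exactly the paper's: take $(a,b)=(0,1)$ so that $K(0,1)=F_\omega(\psi)$, feed Proposition \ref{propKpos} into Theorem \ref{teo2}, and the whole corollary collapses to the inequality $F_\omega(\psi)>0$ for $A_0$ small. Where you diverge is in how this inequality is proved, and here the paper's route is both simpler and stronger. The paper uses no continuity in the parameters at all: differentiating the profile equation \eqref{qdiffi} with respect to $\omega$ gives $\mathcal{L}\eta=-\psi$; multiplying by $\psi$, integrating over $[0,L_0]$, and using self-adjointness of $\mathcal{L}$ together with $\mathcal{L}\psi=-A_0-2\psi$ (itself a consequence of \eqref{qdiffi}) yields the exact identity $2F_\omega(\psi)=2F(\psi)-A_0M_\omega(\psi)$, i.e.\ \eqref{difFom}. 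Since $F(\psi)>0$, positivity of $F_\omega(\psi)$ is immediate as soon as $|A_0|\,|M_\omega(\psi)|<2F(\psi)$. This is a one-line computation performed at the given wave itself, and the same device (differentiating in $A$ to get $2F_A(\psi)=M(\psi)-A_0M_A(\psi)$ and $F_A=M_\omega$) is reused in the next corollary; your argument never produces these identities.

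Your continuity argument, by contrast, has a genuine gap as written, and you have correctly located it yourself. Theorem \ref{teoexisanzero} gives a family smooth in $(\omega,A)$ only on a neighborhood of one \emph{fixed} base point, with the period frozen at the period of that base wave; it does not define a function $(\omega_0,A_0)\mapsto F_\omega(\psi_{(\omega_0,A_0)})$ on a region ``reaching $A_0=0$'', because the base wave $\phi_{(\omega_0,A_0)}$ of Theorem \ref{teoaomegaA} is not a canonical choice: for each $(\omega_0,A_0)$ there is a one-parameter family of admissible orbits with periods ranging over an interval, and different choices change $L_0$ and all the derivative quantities $F_\omega$, $M_\omega$, $F_A$, $M_A$. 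To patch this you must anchor the Implicit Function Theorem at a point $(\omega_0',0)$ on the axis, note that by local uniqueness the restriction of that two-parameter family to $A=0$ coincides with the explicit branch $e^{\omega/2}\phi_0$ of Proposition \ref{proexazero1} (so $F_\omega=\tfrac12 e^{\omega_0'}\int_0^{L_0}\phi_0^2\,dx>0$ there), and then use uniqueness once more to identify the derivative data of your given wave with that of the anchored family. Even patched, the argument only covers waves $\psi_{(\omega_0,A_0)}$ lying in such a neighborhood, i.e.\ close in $H^2_{per}$, with the same period, to some $A=0$ wave; it does not reach an arbitrary wave with $A_0$ small, since for fixed small $A_0$ there are admissible orbits of arbitrarily large period (near the separatrix) escaping every fixed neighborhood of the $A=0$ branch. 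The paper's identity applies to each such wave directly and sidesteps all of this.
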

\begin{proof}
Differentiating the equation
\be\label{qdiffi}
-\psi''+\omega\psi-\psi\log\psi^2+A=0
\ee
with respect to $\omega$, multiplying the obtained equation by $\psi$ and integrating on $[0,L_0]$ we deduce that
\begin{equation}\label{difFom}
2F_\omega(\psi)=2F(\psi)-A_0M_\omega(\psi).
\end{equation}
Since $F(\psi)>0$, we see that if $A_0$ is sufficiently small then $F_\omega(\psi)>0$. Thus, by taking $(a,b)=(0,1)$ we obtain $K(a,b)>0$. The conclusion then follows in view of Proposition \ref{propKpos} and Theorem \ref{teo2}.
\end{proof}

\begin{corollary}
Assume that $\psi>0$ and $det(\mathcal{D})\neq0$. If $A_0>0$ then there exists $(a,b)\in\R^2$ such that $K(a,b)>0$. Consequently, there exists $\Phi\in X$ such that $\mathcal{I}<0$ and $\psi$ is orbitally stable in $X$.
\end{corollary}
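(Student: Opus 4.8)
The plan is to exhibit an explicit pair $(a,b)\in\R^2$ for which $K(a,b)>0$ and then invoke Proposition \ref{propKpos} and Theorem \ref{teo2}. The guiding idea is that, although the individual signs of $M_A(\psi)$, $F_\omega(\psi)$, etc., are not transparent, the profile $\psi$ itself is a specific linear combination of $\beta$ and $\eta$, and for this combination the relevant quadratic form collapses to a manifestly positive quantity because $\psi>0$.

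First I would record three identities. From the profile equation \eqref{qdiffi} and the definition \eqref{opera22} of $\mathcal{L}$, one has $\mathcal{L}\psi=-\psi''+(\omega-2-\log\psi^2)\psi=(-\psi''+\omega\psi-\psi\log\psi^2)-2\psi=-A_0-2\psi$. As recalled in the proof of Proposition \ref{propKpos}, differentiating \eqref{qdiffi} with respect to $A$ and $\omega$ yields $\mathcal{L}\beta=-1$ and $\mathcal{L}\eta=-\psi$. Combining these,
\[
\mathcal{L}\big(\psi-2\eta-A_0\beta\big)=(-A_0-2\psi)-2(-\psi)-A_0(-1)=0,
\]
so $\psi-2\eta-A_0\beta\in\ker(\mathcal{L})$.

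Next I would use the spectral information to pin down $(a,b)$. By Proposition \ref{prop1}, $\ker(\mathcal{L})=[\psi']$ is spanned by the \emph{odd} function $\psi'$, whereas $\psi$, $\eta$, $\beta$ are all \emph{even} (the profiles $\psi_{(\omega,A)}$ are even, and differentiation in the parameters preserves parity). Hence $\psi-2\eta-A_0\beta$ is an even element of a one-dimensional space of odd functions, which forces $\psi=A_0\beta+2\eta$. This is precisely $\Phi=a\beta+b\eta$ with $(a,b)=(A_0,2)$, i.e.\ $\Phi=\psi$. Using the identity $\langle\mathcal{L}\Phi,\Phi\rangle=-K(a,b)$ established in the proof of Proposition \ref{propKpos}, I then compute
\[
K(A_0,2)=-\langle\mathcal{L}\psi,\psi\rangle=\langle A_0+2\psi,\psi\rangle=A_0\langle 1,\psi\rangle+2\langle\psi,\psi\rangle=A_0\,M(\psi)+4F(\psi).
\]
Since $\psi>0$ we have $M(\psi)=\int_0^{L_0}\psi\,dx>0$ and $F(\psi)=\tfrac12\int_0^{L_0}\psi^2\,dx>0$, and $A_0>0$ by hypothesis, so $K(A_0,2)>0$, which proves the first assertion.

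Finally, Proposition \ref{propKpos} produces $\Phi\in X$ (here $\Phi=\psi$) with $\langle\mathcal{L}\Phi,\varphi\rangle=0$ for all $\varphi\in\Upsilon_0$ and $\mathcal{I}=\langle\mathcal{L}\Phi,\Phi\rangle=-K(A_0,2)<0$; together with the hypothesis $\det(\mathcal{D})\neq0$, Theorem \ref{teo2} delivers the orbital stability of $\psi$ in $X$. The only genuinely nontrivial point—the main obstacle—is choosing the correct $(a,b)$: naive choices such as $(1,0)$ or $(0,1)$ require sign information on $M_A(\psi)$ or $F_\omega(\psi)$ that is not available for general $A_0>0$. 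The key realization is that the simplicity of the zero eigenvalue (Proposition \ref{prop1}) combined with the parity of the kernel eigenfunction forces $\psi=A_0\beta+2\eta$; this simultaneously selects $(a,b)=(A_0,2)$ and reduces $K(a,b)$ to the sign-definite expression $A_0 M(\psi)+4F(\psi)$.
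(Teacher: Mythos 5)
Your proof is correct, and it takes a genuinely different route from the paper's. The paper argues by a case analysis on the signs of $M_A(\psi)$, $F_\omega(\psi)$, and $M_\omega(\psi)$: it dispatches $M_A(\psi)>0$ and $F_\omega(\psi)>0$ with $(a,b)=(1,0)$ or $(0,1)$, then, when both are non-positive, uses the identities \eqref{difFom}, \eqref{difFA} and $F_A(\psi)=M_\omega(\psi)$ to show either that the discriminant $M_\omega(\psi)^2-M_A(\psi)F_\omega(\psi)$ is positive (case $M_\omega(\psi)\leq 0$) or that $\det(\mathcal{D})>0$ and $K\big(M_\omega(\psi),-M_A(\psi)\big)=-M_A(\psi)\det(\mathcal{D})>0$ (case $M_\omega(\psi)>0$), with a final sub-case when $M_A(\psi)=0$. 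You instead exhibit one pair $(a,b)=(A_0,2)$ that works in all cases: the relations $\mathcal{L}\psi=-A_0-2\psi$ (from \eqref{qdiffi} and \eqref{opera22}), $\mathcal{L}\beta=-1$, $\mathcal{L}\eta=-\psi$ place $\psi-2\eta-A_0\beta$ in $\ker(\mathcal{L})$, and since Proposition \ref{prop1} says the kernel is spanned by the odd function $\psi'$ while $\psi,\eta,\beta$ are even (Theorem \ref{teoexisanzero} produces the family in $H^2_{per,e}$, so parameter derivatives stay even), you obtain the structural identity $\psi=A_0\beta+2\eta$, whence $\Phi=\psi$ and $K(A_0,2)=-\langle\mathcal{L}\psi,\psi\rangle=A_0M(\psi)+4F(\psi)>0$. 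Every ingredient you invoke is available in the paper and each step checks out; incidentally, your identity is the infinitesimal form of the scaling symmetry $\psi_{(\omega+2s,\,e^sA)}=e^s\psi_{(\omega,A)}$ of \eqref{travlog}, which explains why it must hold. What your approach buys: it is shorter, needs no sign bookkeeping, gives the explicit quantitative value $\mathcal{I}=-\big(A_0M(\psi)+4F(\psi)\big)$, and simultaneously recovers the preceding corollary for small $A_0$ (of either sign), since $A_0M(\psi)+4F(\psi)>0$ persists there. What the paper's case analysis buys: it yields sign information of independent interest along the way (e.g.\ $\det(\mathcal{D})>0$ when $M_\omega(\psi)>0$ and $M_A(\psi)\leq 0$) and does not rely on the evenness of the family or the simplicity of the zero eigenvalue, only on the differentiated identities \eqref{difFom} and \eqref{difFA}.
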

\begin{proof}
From Proposition \ref{propKpos} and Theorem \ref{teo2} it suffices to show the existence of $(a,b)\in\R^2$ such that $K(a,b)>0$. If $M_A(\psi)>0$ we can take $(a,b)=(1,0)$. If $F_\omega(\psi)>0$, we can take $(a,b)=(0,1)$. Assume now that $M_A(\psi)\leq0$ and $F_\omega(\psi)\leq0$. It is to be observed that since $det(\mathcal{D})\neq0$ the case $(M_A(\psi),M_\omega(\psi))=(0,0)$ is ruled out. Differentiating equation \eqref{qdiffi}
with respect to $A$, multiplying the obtained equation by $\psi$ and integrating on $[0,L_0]$ it is inferred that
\begin{equation}\label{difFA}
2F_A(\psi)=M(\psi)-A_0M_A(\psi).
\end{equation}
Taking the derivative in \eqref{difFom} with respect to $A$ and in \eqref{difFA} with respect to $\omega$ and comparing the result we see that $F_A(\psi)=M_\omega(\psi)$. Hence, the function $K$ in Proposition \ref{propKpos} reads as
$$
K(x,y)=x^2M_A(\psi)+2xyM_\omega(\psi)+y^2F_\omega(\psi).
$$
If $M_A(\psi)=F_\omega(\psi)=0$ we can take $(a,b)=(-1,1)$ or $(a,b)=(1,1)$ according to the sign of $M_\omega(\psi)$.

We now divide the rest of proof  into two cases.

\noindent {\bf Case 1.} $M_\omega(\psi)\leq0$.
Note that
\[
\begin{split}
\Delta:&=M_\omega(\psi)^2-M_A(\psi)F_\omega(\psi)\\
&=M_\omega(\psi)^2-M_A(\psi)\Big(F(\psi)-AM_\omega(\psi)\Big)\\
&=M_\omega(\psi)^2-M_A(\psi)F(\psi)-AM_A(\psi)M_\omega(\psi)>0.
\end{split}
\]
Thus, either $K(x,1)=0$ or $K(1,y)=0$ has two different real roots. In any case, this implies that there is $(a,b)\in\R^2$ such that $K(a,b)>0$.

\noindent {\bf Case 2.} $M_\omega(\psi)>0$. Note that
\[
\begin{split}
det(D)&=M_\omega(\psi)\left(\dfrac{M(\psi)}{2}-\dfrac{A}{2}M_A(\psi)\right)-M_A(\psi)\left(F(\psi)-\dfrac{A}{2}M_\omega(\psi)\right)\\
&=\dfrac{1}{2}M(\psi)M_\omega(\psi)-M_A(\psi)F(\psi)>0.
\end{split}
\]
Hence, if $M_A(\psi)<0$ we have $M_A(\psi)\,det(D)<0$. By taking $(a,b)=(M_\omega(\psi),-M_A(\psi))$, we deduce
\[
\begin{split}
K(a,b)=&=M_\omega(\psi)^2M_A(\psi)-M_\omega(\psi)M_A(\psi)\left(M_\omega(\psi)+F_A(\psi)  \right)- M_A(\psi)^2F_\omega(\psi)\\
&=-M_A(\psi)\left( M_\omega(\psi)F_A(\psi)-M_A(\psi)F_\omega(\psi) \right)\\
&=-M_A(\psi)det(\mathcal{D})>0.
\end{split}
\]
Finally suppose  $M_A(\psi)=0$. Since the case $F_\omega(\psi)=0$ has already been dealt with, we may assume $F_\omega(\psi)<0$. By taking $a=1$ and $b=-\dfrac{M_\omega(\psi)}{F_\omega(\psi)}$, we have
$$
K(a,b)=-2\dfrac{M_\omega(\psi)^2}{F_\omega(\psi)}+\dfrac{M_\omega(\psi)^2}{F_\omega(\psi)}=-\dfrac{M_\omega(\psi)^2}{F_\omega(\psi)}>0.
$$
This completes the proof of the corollary.
\end{proof}

\begin{remark}
 \indent Next table shows some values of $M_A(\psi)\det(\mathcal{D})$, $M_A(\psi)$, and $F_{\omega}(\psi)$. Although we are not able to prove analytically, numerical calculations suggest that $\det(\mathcal{D})\neq0$ and $F_{\omega}(\psi)>0$, for all $(\omega,A)\in \mathcal{P}_i$, $i=1,3$ (recall this is true in the case $A=0$). Theorem \ref{teo2} and Proposition \ref{propKpos} would imply that $\psi$ is orbitally stable in $X$ by the periodic flow of   $(\ref{logKDV})$.
 \end{remark}
\begin{center}
\begin{tabular}{|c|c|c|c|c|c|c|c|c|}
\hline
\multicolumn{7}{|c|}{\textbf{Values of $\mathcal{I}$ related to $\omega_0$ and $A_0$}}\\
\hline $\omega_0$ & $A_0$ & $\phi(0)$ & $L_0$ & $M_A(\psi)\det(\mathcal{D})$ & $M_A(\psi)$ & $F_{\omega}(\psi)$\\
\hline\hline
1 & 1 & 3 & 4.18 & -0.47 & -0.21 & 7.41\\
\hline
2 & 4 & 5 & 3.49 & 3.13 & 24.99 & 21.81\\
\hline
-1 & -1 & -0.1 & 4.32 & 0.50 & 0.44 & 1.42\\
\hline
1 & -2 & -2 & 3.65 & 2.80 & 8.99 & 7.82  \\
\hline
-1 & -2 & -1 & 3.37 & -1.65 & -0.14 & 0.52 \\
\hline
5 & 3 & 15 & 4.21 & 354.78 & 1.22 & 349.28\\
\hline
-3 & -2 & -0.5 & 2.95 & 0.18 & 0.43 & 0.22 \\
\hline
-5 & -0.1 & -0.1 & 3.76 & 0.01 & 0.45 & 0.01\\
\hline
-10 & -2 & -0.2 & 2.03 & 0.0008 & 0.20 & 0.004\\
\hline

\end{tabular}
\end{center}
\begin{center}
Table 2
\end{center}

\section*{Acknowledgement}

F. N. is supported by CNPq/Brazil. A. P. is supported by CNPq/Brazil and FAPESP/S\~ao Paulo/Brazil. F. C. is supported by CAPES/Brazil.

\end{document}